\newtheorem{theorem}{Theorem}[section]
\newtheorem{lemma}[theorem]{Lemma}
\newtheorem{proposition}[theorem]{Proposition}
\theoremstyle{definition}
\newtheorem{defin}[theorem]{Definition}
\newtheorem{notation}[theorem]{Notation}
\newtheorem{assumption}[theorem]{Assumption}
\newtheorem{remark}[theorem]{Remark}
\numberwithin{equation}{section}
\newcommand{\F}{\mathbb{F}}
\newcommand{\bF}{\overline{\mathbb{F}}}
\newcommand{\ItF}{\mathcal{F}}
\newcommand{\bItF}{\overline{\mathcal{F}}}
\newcommand{\R}{\mathbb{R}}
\newcommand{\G}{\mathcal{G}}
\newcommand{\prob}{\mathbb{P}}
\newcommand{\bprob}{\overline{\mathbb{P}}}
\newcommand{\vphi}{\varphi}
\newcommand{\w}{\varpi}
\newcommand{\E}{\mathbb{E}}
\newcommand{\bE}{\overline{\mathbb{E}}}
\newcommand{\blq}{\Bigl{[}}
\newcommand{\brq}{\Bigr{]}}
\newcommand{\bblq}{\Biggl{[}}
\newcommand{\bbrq}{\Biggr{]}}
\newcommand{\bl}{\Bigl{(}}
\newcommand{\bbl}{\Bigg{(}}
\newcommand{\br}{\Bigr{)}}
\newcommand{\bbr}{\Bigg{)}}
\newcommand{\bI}{\Big{|}}
\newcommand{\bII}{\Bigg{|}}
\newcommand{\setup}{(\Omega,\mathcal{F},\mathbb{P},\mathbb{F})}
\newcommand{\bsetup}{(\overline{\Omega},\overline{\mathcal{F}},\overline{\mathbb{P}},\overline{\mathbb{F}})}
\newcommand{\btheta}{\boldsymbol{\overline{\theta}}}
\newcommand{\bPhi}{\boldsymbol{\Phi}}
\newcommand{\bv}{\overline{v}}
\newcommand{\prodspace}{\prod_{i = 0}^{2^n-1} \mathcal{C}([t_i,t_{i+1}];\R)^{\mathbb{J}^i_l}}
\newcommand{\prodspacej}{\prod_{j = 0}^{2^n-1} \mathcal{C}([t_j,t_{j+1}];\R)^{\mathbb{J}^j_l}}
\newcommand{\vt}{\varpi^{\theta}}
\newcommand{\dcond}{V_1 = v_1,\dots, V_i = v_i}
\newcommand{\alphaexpint}{\frac{e^{\alpha t} - 1}{\alpha}}
\newcommand{\alphaexpinT}{\frac{e^{\alpha T} - 1}{\alpha}}
\newcommand{\N}{\mathbb{N}}
\newcommand{\Xnl}{X^{n,l}}
\newcommand{\XIn}{X^{I,n}}
\newcommand{\YIn}{Y^{I,n}}
\newcommand{\XNn}{X^{S,n}}
\newcommand{\Xpn}{X^{p,n}}
\newcommand{\YNn}{Y^{S,n}}
\newcommand{\Ypn}{Y^{p,n}}
\newcommand{\Ypinf}{Y^{p,\infty}}
\newcommand{\Xpinf}{X^{p,\infty}}
\newcommand{\Zopn}{Z^{p,0,n}}
\newcommand{\Zpn}{Z^{p,n}}
\newcommand{\Mpn}{M^{p,n}}
\newcommand{\bVn}{\overline{V}^{n}}
\newcommand{\Vn}{{V}^{n}}
\newcommand{\bX}{\overline{X}}
\newcommand{\bY}{\overline{Y}}
\newcommand{\bZ}{\overline{Z}}
\newcommand{\bM}{\overline{M}}
\newcommand{\Xinf}{X^{\infty}}
\newcommand{\xiinf}{\xi^{\infty}}
\newcommand{\etainf}{\eta^{\infty}}
\newcommand{\YIinf}{Y^{I,\infty}}
\newcommand{\YNinf}{Y^{S,\infty}}
\newcommand{\XIinf}{X^{I,\infty}}
\newcommand{\XNinf}{X^{S,\infty}}
\newcommand{\binf}{b^{\infty}}
\newcommand{\winf}{\varpi^{\infty}}
\newcommand{\wbminf}{w^{\infty}}
\newcommand{\Thetainf}{\Theta^{\infty}}
\newcommand{\Einf}{\mathbb{E}^{\infty}}
\newcommand{\wn}{\varpi^{n}}
\newcommand{\wnn}{\varpi^{n,2n}}
\newcommand{\Ynn}{Y^{n,2n}}
\newcommand{\Yonn}{Y^{0,n,2n}}
\newcommand{\tstop}{\cdot \wedge t} 
\newcommand{\charfunc}{\mathds{1}}
\newcommand{\Prob}{\mathbb{P}}
\newcommand{\wmf}{\varpi^{\text{mf}}}
\newcommand{\Om}{\Omega}
\newcommand{\bOm}{\overline{\Omega}}
\newcommand{\mffil}{\mathcal{F}^{\varpi^{\text{mf}}, b}}
\newcommand{\cadlag}{c\`{a}dl\`{a}g }
\newcommand{\processinf}{(b^{\infty},c^{\infty},\eta^{I,\infty},w^{I,\infty},\eta^{S,\infty},w^{S,\infty},\w^\infty,Y^{I,\infty},Y^{S,\infty},X^{I,\infty},X^{S,\infty})}
\newcommand{\setupinf}{(\Omega^{\infty}, \mathcal{F}^{\infty}, \mathbb{P}^{\infty})}
\newcommand{\probspace}{(\Omega,\mathcal{F},\mathbb{P})}
\newcommand{\be}{\begin{equation}}
\newcommand{\ee}{\end{equation}}
\newcommand{\ba}{\begin{aligned}}
\newcommand{\ea}{\end{aligned}}
\newcommand{\setupn}{(\Omega,\mathcal{F},\mathbb{P},\mathbb{F}^S)}
\newcommand{\setupi}{(\Omega,\mathcal{F},\mathbb{P},\mathbb{F}^I)}
\newcommand{\FBSDEn}{(X^S, Y^S, Z^{S,0}, Z^S, M^S)}
\newcommand{\FBSDEi}{(X^I, Y^I, Z^{I,0}, Z^I, M^I)}
\newcommand{\constantni}{(n_I\overline{\Lambda}^I + n_S \overline{\Lambda}^S)^{-1}}
\renewcommand{\tilde}{\widetilde}
\renewcommand{\hat}{\widehat}
\renewcommand{\bar}{\overline}
\def \red {\textcolor{red}}
\title[Weak equilibria of a mean-field market model under asymmetric information]{Weak equilibria of a mean-field market model under asymmetric information}
\author[A. Cecchin]{Alekos Cecchin}
\author[M. Fischer]{Markus Fischer}
\author[C. Fontana]{Claudio Fontana}
\author[G. Lanaro]{Giacomo Lanaro}
\address{Department of Mathematics ``Tullio Levi-Civita'', University of Padova, Italy}
\email{acecchin@math.unipd.it, fischer@math.unipd.it, fontana@math.unipd.it, glanaro@math.unipd.it}
\subjclass{49N80, 60H10, 91A07, 91A16} 
\keywords{mean-field game; common noise; weak equilibrium; FBSDE; price formation; market clearing; additional information}
\thanks{
This work is partially supported by 
the PRIN 2022 Project 2022BEMMLZ ``Stochastic control and games and the role of information''. 
A.C. and C.F. are also supported by INdAM-GNAMPA Project 2025 ``Stochastic control and MFG under asymmetric information: methods and applications'' and the PRIN 2022 PNRR Project P20224TM7Z  ``Probabilistic methods for energy transition''. A.C. also acknowledges support  from project MeCoGa ``mean-field control and games'' of the University of Padova through the program STARS@UNIPD - NextGenerationEU
}
\date{May 1, 2026}
\begin{document}

\begin{abstract}
We investigate how asymmetric information affects equilibrium price formation in an economy with many interacting agents. Motivated by a finite-player model with two populations of asymmetrically informed agents, we study its mean-field limit when one population observes an additional stochastic factor which is inaccessible to the other. The resulting equilibrium condition involves the conditional expectation of the adjoint process and, therefore, differs from standard mean-field formulations based on the state process. We prove existence of mean-field equilibria in probabilistic weak sense by combining discretization and weak convergence arguments with a lifting procedure tailored to preserve compatibility in the limit. Under additional assumptions, we obtain a conditional asymptotic justification of the mean-field price as an approximation of the finite-player market clearing relation. Finally, we illustrate how, in the case of a single informed agent, her strategy can be characterized in terms of the equilibrium.
\end{abstract}

\maketitle

\section{Introduction}
\label{section: introduction}

The study of decision-making in large populations of interacting agents is a central problem in stochastic control and game theory. In many applications, a key challenge in determining players' strategies arises from the heterogeneity of the information available to each individual. In particular, interactions among players are determined by the differences in the information accessible to each player as well as how this information propagates throughout the system. To analyze these issues, we consider a framework in which players of two different types make decisions based on the different sources of randomness they can access.
In our model, players are divided into two subpopulations, one of informed players and one of less informed (standard) players. Players of the informed population have access to a common stochastic factor which remains unobservable to the other subpopulation. This additional source of randomness impacts the decision-making process of the informed players, affecting, through their interaction in equilibrium, also the collective behavior of the standard players. 
{Our primary aim is to analyze how an additional source of payoff-relevant information, observed only by the informed subpopulation, affects equilibrium price formation. By contrast, the paper is not intended to model in full generality how dealers’ inventory constraints transmit client order flow into prices.}

The main objective of this paper is to study the existence of mean-field equilibria in such a framework and explore what these equilibria reveal about the extent of common information in the market. 
Our work is motivated by the problem of price formation in a market populated by finitely many rational agents with different levels of information and trading the same asset. 
In this model, each agent solves a stochastic optimal control problem that depends on the asset price process, which is initially assumed to be exogenously given. The interaction among the agents is given by the equilibrium price, which is determined by the market clearing condition. This condition imposes a constraint on the optimal controls of all agents in the market and provides the equilibrium price through a fixed point. Enforcing this constraint, we obtain a stochastic system populated by two different types of players, characterized by a highly recursive structure. As a consequence, proving the existence of a solution to this model is a complex task. To address this challenge, we adopt an approach based on  mean-field game (MFG) theory, which provides a powerful framework for analyzing large-scale strategic interactions by studying the statistical behavior of a population of infinitely many interacting agents. 

Introduced in \cite{lasry2007mean} and \cite{huang2006large}, MFGs describe the limiting behavior of stochastic differential games with a large number of interacting players. We establish an analogy between the equilibrium of the system we are considering and the concept of weak equilibrium in MFGs; see \cite{carmona2016mean, lacker2016general}. Our goal is to prove the existence of solutions to an equation that depends on the collective behavior of the players, capturing the interdependence of individual strategies in response to the common information available. Once the existence of solutions is established, the resulting mean-field equilibrium reflects the impact of information asymmetry.

In many applications of MFG theory to economic and financial problems (\cite{gomes2015economic, carmona2020applications}), the model is defined by the exogenous evolution of key system variables. Examples include optimal trading strategies \cite{lehalle2019mean}, portfolio liquidation \cite{fu2021mean}, and energy market optimization \cite{alasseur2020extended, alasseur2023mean}. 
In our framework, the evolution of the system is determined endogenously by the strategic interactions between players. This problem is related to several works which explore equilibrium price formation in MFG markets under the market clearing condition \cite{gueant:hal-01393104, gomes2020mean, gomes2021mean, fujii2022mean, fujii2022strong, fujii2023equilibrium, fujii2022equilibrium}. These studies analyze the interaction between individual agents and equilibrium dynamics, incorporating various sources of randomness, including idiosyncratic noise and common noise. The mean-field approach has been also applied to the study of price formation in electricity market \cite{feron2020price, feron2021price, aid2022equilibrium}, in markets subject to random supply \cite{gomes2023random}, and in the context of pollution regulation through cap and trade mechanism \cite{shrivats2022mean,del2024mean}.

A common limitation of the works mentioned above is that the equilibrium price defined by the market clearing condition is supposed to be adapted to the common flow of information. As emphasized in \cite{fujii2022mean}, this assumption is inconsistent with the market clearing condition in a finitely populated market, while it may be reasonable in the mean-field limit, where the idiosyncratic noises of each agent, assumed to be pairwise independent, vanish. This reasoning does not apply to our framework: the key difference is the presence of the additional source of randomness shared by the informed subpopulation, which does not vanish in the mean-field limit. As a consequence, we derive an equation for the equilibrium price, whose solution cannot be obtained using well-known arguments such as the continuation method developed in \cite{peng1999fully}. 
Hence, our main result is to prove existence of {\em weak} equilibria for the mean-field model we introduce. 
Following the approach described in \cite{carmona2016mean,carmona2018probabilistic2},  we construct the weak solution as the limit in distribution of a sequence of discretized problems. This approach is applied also in \cite{tangpi2024optimal} in the context of optimal bubble riding models, as well as in \cite{bergault2024mean} to prove existence of weak equilibria for a Stackelberg game with an informed major player.
Compared to the above papers, the main difficulty of our work is that the interaction arises as the conditional expectation of the costate process, instead of the state process. 
{This feature cannot be addressed by a straightforward extension of the weak MFG framework of \cite{carmona2016mean,carmona2018probabilistic2} and creates additional difficulties both in preserving compatibility under weak convergence and in proving that the limiting price process satisfies the desired consistency condition.}

Related works on information asymmetry in MFG include \cite{sen2016mean, fu2020mean, ma2018kyle, bertucci2022mean, bensoussan2021mean, casgrain2019trading, casgrain2020mean}, which study various aspects of heterogeneous information structures. More recently, \cite{bergault2024mean2, bergault2024mean} analyze different models in which informed agents exploit privileged information to influence the market. 
{To the best of our knowledge, equilibrium price formation in a mean-field market with heterogeneous information, where the additional source of randomness carried by the informed population does not vanish in the limit and enters the equilibrium equation through the adjoint process, has not been addressed in the existing literature.}

\subsection*{Main contributions and outline of the paper}

In \S \ref{section: setup}, we introduce the stochastic framework and the equilibrium conditions in the finite-dimensional setting, motivating these choices by the problem of price formation. We also recall some preliminaries about FBSDEs and Pontryagin's principle in \S \ref{section: FBSDE from SMP}.

In \S \ref{section: mfl}, we present and motivate the introduction of the mean-field limit of the system, which is the main object of this paper. The notion of weak lifted mean-field equilibrium for a problem with asymmetric information is given in Definition \ref{def: mf price process}. As mentioned above, the difference with the previous literature is that the equilibrium condition for the weak formulation involves the conditional expectation of the costate processes, which might be discontinuous; see \eqref{eqn: mc price process mf v2}. 
{This requires a careful treatment of the compatibility condition and leads to a lifted definition of equilibrium, in which the environment is not required to be adapted to the original common noises.
A key point is that this lifting is not merely formal: it is needed to prove that compatibility is preserved in the weak limit and to identify the limiting consistency condition. 
Since the natural mean-field market clearing condition \eqref{eqn: mc price process mf} appears difficult to solve in our framework, our main existence theorem is formulated for a weaker projected consistency condition \eqref{eqn: mc price process mf v2}. Accordingly, the weak equilibria of Definitions \ref{def: mf price process} and \ref{def: unlifted mf price process} are motivated by market clearing but are not exact market clearing equilibria in the economic sense.
Our main result is Theorem \ref{thm: existence of solutions}, which proves existence of weak lifted mean-field equilibria under the general Assumption \ref{assumption: coefficients} for this projected consistency condition. 
The main mathematical novelty of this result lies not only in the asymmetric information setting, but also in the fact that the equilibrium condition involves the adjoint processes and conditional expectations with respect to the equilibrium price itself.
Under the additional linearity structure of Assumption \ref{assumption: affine target functions} on the running and terminal cost functions, we also prove the existence of a weak unlifted equilibrium; see Theorem \ref{thm: existence of unlifted mean-field price process}. This is obtained by a projection argument which shows that, in this more special regime, the lift can be removed.
Assumption \ref{assumption: affine target functions} can be viewed as a technically convenient but economically more restrictive condition.
}

Section \ref{section: existence of solutions} is devoted to the proof of Theorem \ref{thm: existence of solutions}.
We analyze a discretization of the problem in \S \ref{section: discretization procedure} and prove existence of its solution via a fixed-point argument. Then in \S \ref{section: stability} we show tightness and stability under weak convergence, using the Meyer-Zheng topology and taking care of the compatibility condition.  
{A key point here is that the lifting must involve the adjoint component itself, because the interaction is through the costate rather than the state process.}
The consistency condition for the limit is shown in \S \ref{section: consistency condition} {through an argument tailored to the equilibrium notion considered here. }

{In \S \ref{section: weak market clearing}, by proving Theorem \ref{thm: weak mk clearing condition}, we obtain a conditional asymptotic justification of the mean-field equilibrium as an approximation of the finite-player market clearing relation.
This analysis is carried out under the stronger condition \eqref{eqn: project eq price v1} which is understood as a technical modification of Definition \ref{def: unlifted mf price process}, introduced in order to obtain the probabilistic structure needed for the asymptotic analysis.  
Accordingly, Section \ref{section: weak market clearing} should be interpreted as an asymptotic validation result under stronger assumptions, rather than as a full existence theory for exact market clearing equilibria.
}

Finally, in \S \ref{section: one informed agent}, we analyze a special case in which there is just one informed player in the market, showing how, in this case, her strategy can be explicitly described in terms of the equilibrium of the model and the common noise.

\subsection*{Notation}

For any probability space $(\Om,\G,\prob)$ endowed with a filtration $\mathbb{G}= (\G_t)_{t\in[0,T]}$ we refer to:
\begin{itemize}
\item $\mathbb{L}^2(\G)$ as the set of real-valued $\G$-measurable square integrable random variables;
\item $\mathbb{S}^2(\mathbb{G})$ as the set of real-valued $\mathbb{G}$-adapted \cadlag processes $X$ satisfying:
\begin{displaymath}
||X||_{\mathbb{S}^2}:= \E\bblq \sup_{t\in[0,T]} |X_t|^2 \bbrq^{\frac{1}{2}}<\infty;
\end{displaymath}
\item $\mathbb{H}^2(\mathbb{G})$ is the set of real-valued $\mathbb{G}$-progressively measurable processes $Z$ satisfying:
\begin{displaymath}
||Z||_{\mathbb{H}^2}:= \E\bblq \bbl \int_0^T |Z_t|^2 dt \bbr \bbrq^{\frac{1}{2}}<\infty.
\end{displaymath}
\end{itemize}
Note that we consider real-valued processes, but the whole analysis can also be done  for multidimensional processes, with slightly more complicated notation.  We also introduce the following notation: 
\begin{itemize}
\item For a constant $\Lambda>0$, we adopt the notation $\bar{\Lambda} := \Lambda^{-1}$. 
\item $\mathcal{C}([0,T],\R)$ denotes the space of real-valued continuous functions on $[0,T]$.
\item $\mathcal{D}([0,T],\R)$ denotes the space of real-valued \cadlag functions on $[0,T]$.
\item $\mathcal{M}([0,T],\R)$ denotes the set of real-valued measurable functions on $[0,T]$. We endow $\mathcal{M}([0,T],\R)$ with the Meyer-Zheng topology. The main properties of this space we are going to apply are described in Appendix \ref{Appendix:MZ}. 
\item $\mathcal{L}(X)$ is the law of a random variable $X:\Omega \to \mathcal{S}$, taking values on a Polish space $\mathcal{S}$.
\item $\F^\Theta := (\ItF^{\Theta}_t)_{t\in[0,T]}$ the complete and right-continuous augmentation of the natural filtration generated by the process $\Theta= (\Theta_t)_{t\in[0,T]}$. We refer to this as the usual augmentation. 
\end{itemize}

\section{The finite population setting and its financial motivation}
\label{section: setup}

We start by describing the financial problem which motivates our mathematical setup. As in \cite[Section 3.1]{fujii2022mean}, we consider a market model populated by $N$ agents, belonging to two populations: $N_S$ standard agents (called in the following standard agents) and $N_I$ informed agents. All agents trade a single asset. The goal of every agent is to solve an optimal control problem which depends on the price of the traded asset, denoted by $\w$. We suppose that the informed agents' revenues depend on an additional stochastic process $C$, which can be observed by them, but is inaccessible to the standard agents. The probabilistic setup is represented by a family of stochastic control problems defined as follows.

Index $I$ will be associated with the informed agents, while by index $S$ we refer to the standard agents. We consider a probability space $(\Omega, \mathcal{G}, \mathbb{P})$ supporting the following:
\begin{itemize}
\item For every $p = I,S$ and $j = 1,\dots,N_p$, a random variable $\xi^{p,j}$ and a Brownian motion $(W^{p,j}_t)_{t\in[0,T]}$ independent of $\xi^{p,j}$. The random variable $\xi^{p,j}$ represents the initial value of the state variable of the $j$-th agent of population $p$, while the Brownian motion represents the idiosyncratic noises associated with each agent; 
\item A Brownian motion $(B_t)_{t\in[0,T]}$ and a real-valued {càdlàg} stochastic process $C$, possibly correlated with $B$, which represents the private information of the informed agents. 
\item The processes $(B,C)$, $(\xi^{I,p}, W^{I,p})_{p=1}^{N_I}$ and $(\xi^{S,p}, W^{S,p})_{p=1}^{N_S}$ are independent. We denote by $\mathbb{G}$ the complete and right-continuous augmentation of the natural filtration generated by all noises and initial conditions: $\mathbb{G}=\F^{\xi^{I,1}, \dots, \xi^{I,N_I}, W^{I,1}, \dots, W^{I,N_I}, \xi^{S,1}, \dots, \xi^{S,N_S}, W^{S,1}, \dots, W^{S,N_S}, B,C}$.  
\end{itemize}

\noindent We denote by $\w$ the price process of the traded asset, assumed for the moment to be a generic adapted c\`{a}dl\`{a}g process ${\w}= ({\w}_t)_{t\in[0,T]}$, defined on $(\Omega,\mathcal{G}, \prob, \mathbb{G})$; as a consequence, note that it is adapted to the filtration generated by all the noises introduced above. Thus, we define in this section a \emph{strong formulation} of the economy with $N$ agents. 
We make the following standing assumption on the initial condition of the state variable of every agent, which is supposed to hold throughout the paper.
\begin{assumption}
\label{assumption: 4th moment X0 bound}
The vector $(\xi^{p,1},\dots, \xi^{p,N_p})$ forms a sequence of i.i.d. random variables, such that $\E[|\xi^{p,j}|^4]<\infty$ for every $j =1,\dots, N_p$, every $p = I,S$. 
\end{assumption}

We now describe the optimal control problems which are solved by the agents. At the level of the individual optimization problem, every agent is treated as a {\em price taker}, choosing her strategy by taking the price process $\varpi$ as an exogenous process. 
In the following, we may call the price $\w$ a random environment, in analogy with \cite[Chapter 1]{carmona2018probabilistic2}.  The equilibrium price is then determined through the market-clearing condition introduced below.

\subsection{The optimization problem of the standard agents}
\label{section: lia}
 We introduce the number of shares $X^{p,j}_t$ held by the $j$-th agent of population $p$ at time $t$. In the following, we will refer to this agent as agent $(p,j)$, for $j = 1,\dots,N_p$ and $p = I,S$. The state process $X^{p,j}$ is controlled by the agent through the trading speed, denoted $\alpha^{p,j}$. In particular, $\alpha^{p,j}_t$ represents the number of shares traded by agent $(p,j)$ in the infinitesimal time interval $[t,t+dt]$. In addition, the position $X^{p,j}$ depends also on the trades between the agent and its individual clients. Similarly as in \cite{fujii2022mean}, we suppose that every standard agent has a group of customers who trade the securities with the agent and have no direct access to the market. The random demand of the private customers of agent $(p,j)$ is described by the Brownian motion $W^{p,j}$, multiplied by a factor $\sigma^p$, possibly depending on the price process. Finally, we also allow for the existence of a common random demand which affects all the standard agents in the same way. It is described by a factor $\sigma^{p,0}$, possibly dependent on $\w$, multiplied by the Brownian motion $B$. 

Each standard agent's state dynamics is hence described by the following SDE:
\begin{equation}
\label{eqn: state variable standard}
\begin{cases}
dX^{S,j}_t = (\alpha^{S,j}_t + l^S(t,{\w}_t))dt + \sigma^{S,0}(t,{\w}_t)dB_t + \sigma^{S}(t,{\w}_t)dW^{S,j}_t,\\ 
X^{S,j}_0 = \xi^{S,j},
\end{cases}
\end{equation}
%
for every $j = 1,\dots,N_S$.
The cost functional to be minimized by the $j$-th standard agent is:
\begin{equation}
\label{eqn: objective functional standard}
J^{S}(\alpha^{S,j}):= \E\bblq \int_0^T f^S(t,X^{S,j}_t,\alpha^{S,j}_t,{\w}_t) dt + g^S(X^{S,j}_T,{\w}_T)\bbrq,\quad j=1,\dots,N_S,
\end{equation}
where 
\begin{equation*}
    f^S(t,x,a,{\w}):= {\w}a + \frac{1}{2}  \Lambda^S a^2 + \bar{f}^S(t,x,{\w}),
\end{equation*}
for measurable functions $\bar{f}^S$ and $g^S$ and a positive constant $\Lambda^S$. For every $j = 1,\dots, N_S$, the family of admissible controls is given 
\begin{equation} 
\label{eqn: controls space standard}
\alpha^{S,j} \in \mathbb{A}^{S,j}:=  \mathbb{H}^2(\F^{S,j}), \qquad 
\F^{S,j}:={\F}^{\xi^{S,j},\w,B,W^{S,j}}.
\end{equation}
Every standard agent observes the price process and the Brownian motions $B$, $W^{S,j}$, but is not aware of the presence of additional sources of randomness that are affecting the market.

\subsection{The optimization problem of the informed agents}
\label{section: mia}
The optimal control problem of the informed agents is defined in analogy to the one for the standard agents. The state variable satisfies 
\begin{equation}
\label{eqn: state variable major}
\begin{cases}
dX^{I,j}_t = (\alpha^{I,j}_t + l^I(t,\w_t)) dt + \sigma^{I,0}(t,\w_t) dB_t + \sigma^{I}(t,\w_t) dW^{I,j}_t ,\\
 X^{I,j}_0 = \xi^{I,j}.
 \end{cases}\quad j=1,\dots,N_I.
\end{equation}
As discussed, the gap in the information structure between the informed agents and the standard agents is described by the presence of an additional stochastic factor $C$ which affects the revenues of the informed agents. 
Accordingly, the cost functional to be minimized by the $j$-th informed agent is:
\begin{equation}
\label{eqn: objective functional major}
J^{I}(\alpha^{I,j}):= \E\bblq \int_0^T f^I(t,X^{I,j}_t,\alpha^{I,j}_t,{\w}_t,C_t) dt + g^I(X^{I,j}_T,{\w}_T,C_T)\bbrq,
\end{equation}
where 
\begin{equation*}f^I(t,x,a,{\w},c):= {\w}a + \frac{1}{2}  \Lambda^I a^2 + \bar{f}^I(t,x,{\w},c),
\end{equation*}
for measurable functions $\bar{f}^I$ and $g^I$ and a positive constant $\Lambda^I$. For every $j = 1,\dots, N_I$, the family of admissible controls is given by 
\begin{equation} 
\label{eqn: controls space major}
\alpha^{I,j} \in \mathbb{A}^{I,j}:=  \mathbb{H}^2(\F^{I,j}), \qquad \F^{I,j}:=\F^{\xi^{I,j},\w,B,C,W^{I,j}}.
\end{equation}
This implies that the informed agents can choose the trading strategy depending on the information released by the extra factor $C$, {which is assumed to be a càdlàg process}. 

\begin{remark}[Financial interpretation of the performance functionals]
{
The running cost function of both populations contains the linear term $\varpi a$, which represents the cashflow generated by buying or selling the asset at time $t$, and a quadratic term in $a$, which penalizes large trading rates and can be interpreted as a proxy for execution costs. The functions $\bar{f}^p$ and $g^p$, for $p=S,I$, which do not depend directly on the control variable, describe instead the position-dependent (or inventory, represented by the state variable $x$) component of the agent’s objective, namely the costs or revenues associated with holding the asset over time and at the terminal date. These functions may account for inventory costs, funding costs, hedging motives, and liquidation values.
For the informed agents, these terms may depend on the additional process $C$, which should be interpreted as an extra payoff-relevant factor observed only by that population. In our framework, the asymmetry of information does not necessarily concern the terminal fundamental value of the asset, but may also concern private information about how valuable it is for informed agents to hold a larger or smaller position. This is consistent with the model setup, where $C$ affects the revenues of informed agents and can be used in their admissible strategies. }

{We present two simple financial examples of this setup, which are motivated by the literature on funding constraints and dealer balance-sheet capacity (see, e.g., \cite{BP09,HK13}) and the literature on dealer risk management and hedging-induced trading (see, e.g., \cite{NY03}):
\begin{enumerate}
\item {\em Funding conditions of primary dealers.}
Consider a bond market with two populations: primary dealers (informed agents) and institutional investors (standard agents).
The informed population may represent a sector of primary dealers, with $C_t$ representing their privately observed funding conditions at time $t$, while the common noise $B_t$ represents market-wide shocks to the aggregate demand or supply of the asset.
High values of $C_t$ correspond to tighter intermediation capacity (so that carrying a position is more costly), while lower values indicate more favorable funding conditions. 
In this case, one may use
\[
\bar{f}^I(t,x,\varpi,c)=x\,c^I_{1}(t,\varpi,c), \qquad 
g^I(x,\varpi,c)=x\,c^I_{2}(\varpi,c),
\]
for suitable continuous bounded functions $c^I_{1}$ and $c^I_{2}$, increasing in $c$. 
When funding conditions are tight ($c$ high), the marginal cost of holding inventory is high and dealers optimally reduce their positions. When conditions improve ($c$ low), dealers can warehouse more inventory at lower costs. Hence, observing $C_t$ changes the marginal cost of inventory and, therefore, affects the dealers’ optimal trading strategy.
Through market clearing, primary dealer funding conditions are transmitted into the equilibrium price.
Note also that this example satisfies Assumption \ref{assumption: affine target functions} introduced in Section \ref{section: modification canonical space} below.
\item {\em Hidden hedging demand from an OTC derivatives book.}
The informed agents may represent dealers managing an over-the-counter (OTC) derivatives book not observed by the rest of the market. Let $C_t$ denote the delta-equivalent risk exposure of that private book, measured in units of the traded asset (i.e., $C_t$ is the number of units of the traded asset that would be needed to hedge the exposure). Dealers use the traded asset to hedge, so that it is costly to hold too many or too few units of the asset relative to the hedging target $C_t$. According to this interpretation, a natural specification is
\[
\bar{f}^I(t,x,\varpi,c)=\kappa_1 u(x-c), \qquad
g^I(x,\varpi,c)=\kappa_2  u(x-c)+ v(\varpi)x,
\]
where for instance $u(r) =r^2$ if $|r|\leq R$ and $u(r)=2Rr-R^2$ if $|r|\geq R$, $R>0$ is a big constant threshold, $\kappa_1,\kappa_2>0$ and $h$ is a continuous function.\footnote{
We consider $u$ of this form, instead of just quadratic, because it is convex, globally Lipschitz and continuously differentiable, in accordance with Assumption \ref{assumption: coefficients} below.
} 
In this case, $C_t$ does not represent information about the asset’s fundamental value, but rather a privately observed hedge target. Observing $C_t$ allows the informed agents to choose positions that reduce future hedging costs and, therefore, change their optimal trading strategy.
\end{enumerate}
}
\end{remark}

The following standard growth and regularity assumptions on the coefficients will be in force throughout the paper. Note that the volatility $\sigma^p$ and $\sigma^{p,0}$ might be degenerate, but we assume convexity in $x$ of the costs.  
\begin{assumption}
\label{assumption: coefficients}
There exists a constant $L>0$ such that:
\begin{enumerate}
\item The coefficients $l^p$, $\sigma^p$, $\sigma^{p,0}$ are Borel-measurable functions. For every $t\in[0,T]$ the functions $l^p(t,\cdot)$, $\sigma^p(t,\cdot)$ and $\sigma^{p,0}(t,\cdot)$ are continuous and for every $\w\in \R$
\begin{displaymath}
|l^p(t,\w)| + |\sigma^p(t,\w)| + |\sigma^{p,0}(t,\w)| \leq L(1 + |\w|).
\end{displaymath} \label{A1}
\item For every $t\in [0,T]$ and for $p = I,S$ the coefficients $\bar{f}^S(t,\cdot,\cdot)$, $\bar{f}^I(t,\cdot,\cdot,\cdot)$, $g^S(\cdot,\cdot)$ and $g^I(\cdot,\cdot,\cdot)$ are continuous functions. For every $t\in[0,T]$, $c\in\R$, $\w\in\R$, the functions $\bar{f}^S(t,\cdot,\w)$, $\bar{f}^I(t,\cdot,\w,c)$ and $g^S(\cdot, \w)$, $g^I(\cdot,\w,c)$ are continuously differentiable. Moreover
\begin{displaymath}
|\bar{f}^S(t,x,\w)| + |g^S(x,\w)| + |\bar{f}^I(t,x,\w,c)| + |g^I(x,\w,c)|	\leq L(1 + |x| + |\w|^2+ |c|^2).
\end{displaymath}\label{A2}
\item The functions $\bar{f}^p$ and $g^p$ are convex in the $x$ variable. \label{A3}
\item The functions $l^S(t,\w)$, $l^I(t,\w)$, $\partial_x \bar{f}^S(t,x,\w)$, $\partial_xg^S(x,\w)$, $\partial_x \bar{f}^I(t,x,\w,c)$ and $\partial_xg^I(x,\w,c)$ are continuous. \label{B1}

\item For every $t\in[0,T]$ and for every $c,\w\in\R$, it holds that
\begin{equation*}
|\partial_x \bar{f}^S(t,x,\w)| + |\partial_xg^S(x,\w)|  + |\partial_x \bar{f}^I(t,x,\w,c)| + |\partial_xg^I(x,\w,c)| \leq L.
\end{equation*}  \label{B2}

\item For every $t\in[0,T]$ and for every $x,x',\w,c\in\R$, it holds that
\begin{equation*}
\begin{aligned}
|\partial_x\bar{f}^S(t,x',\w) -\partial_x\bar{f}^S(t,x,\w)| 	 + |\partial_x\bar{f}^I(t,x',\w,c) -\partial_x\bar{f}^I(t,x,\w,c)| &\leq L|x-x'|,\\
|\partial_xg^S(x,\w) - \partial_x g^S(x',\w)|+|\partial_xg^I(x,\w,c) - \partial_x g^I(x',\w,c)|				&\leq L|x-x'|.
\end{aligned}
\end{equation*} \label{B3} 
\end{enumerate}
\end{assumption} 

\subsection{The FBSDE associated with the stochastic maximum principle}
\label{section: FBSDE from SMP}
To determine the candidate optimal controls, we aim at applying the stochastic maximum principle. As discussed in \cite[Section 1.4]{carmona2018probabilistic2}, the application of the stochastic maximum principle requires a compatibility condition, which can be defined in terms of the immersion property.
\begin{defin}[Compatibility]
\label{def: compatibility}
On a probability space $(\Omega,\ItF,\prob)$, let us consider two filtrations $(\ItF_t)_{t\in[0,T]}$ and $(\G_t)_{t\in[0,T]}$. We say that $(\ItF_t)_{t\in[0,T]}$ is \emph{immersed} in $(\G_t)_{t\in[0,T]}$ if
\begin{itemize}
\item $\ItF_t\subseteq\G_t$ for all $t\in[0,T]$.
\item martingales with respect to $(\ItF_t)_{t\in[0,T]}$ remain martingales with respect to $(\G_t)_{t\in[0,T]}$.
\end{itemize}
A stochastic process $\theta$, defined on a probability space $(\Omega,\ItF,\prob)$, is \emph{compatible} with a filtration $\F:= (\ItF_t)_{t\in[0,T]}$ if the natural filtration $\F^\theta$ generated by $\theta$ is immersed in $\F$. 
\end{defin}
 We recall that the immersion of $(\ItF_t)_{t\in[0,T]}$ in $(\G_t)_{t\in[0,T]}$ is equivalent to requiring that
\begin{equation}
\label{eqn: immersion}
\ItF_T \text{ is conditionally independent of } \G_t \text{ given } \ItF_t.
\end{equation}
For a thorough discussion on the notion of compatibility and its equivalent definitions we refer to \cite[\S 1.1]{carmona2018probabilistic2}.
The definition of compatibility is crucial to apply the stochastic maximum principle in the version of \cite[Theorem 1.60]{carmona2018probabilistic2}, which is based on the following definition of admissibility.
\begin{defin}
\label{def: admissibility}
A probability space $\probspace$, endowed with a complete and right-continuous filtration $\F$ and on which a process $\theta:= (\xi, W, \chi)$ is defined, is an \emph{admissible setup} if
\begin{enumerate}
\item $\xi$ is $\mathcal{F}_0$-measurable and independent of $(W,\chi)$;
\item $W$ is an $\F$-Brownian motion;
\item {$\chi \in \mathbb{S}^2(\mathbb{F})$ and} $\theta$ is compatible with $\F$.
\end{enumerate}
We refer to the term admissible probabilistic setup by $(\probspace, \F, \theta)$. 
\footnote{Note that this definition is slightly different from \cite[Def. 1.13]{carmona2018probabilistic2}, as here the environment is not independent of the idiosyncratic noise. However, all the results in that book apply in the same way, provided that $W$ remains a Brownian motion for the filtration $\mathbb{F}$. 
}

\end{defin} 
For convenience of notation, let us denote
\begin{equation}
\label{eqn: chip} 
\chi^{p,\w} := 
\begin{cases}
(\w,C), \quad &\text{if } p = I;\\
\w,\quad &\text{if }p = S.
\end{cases} 
\end{equation}
We state the following assumption, which is in fact an assumption on the joint distribution of $(B,W^{p,j},C)$ : 
\begin{assumption}
\label{assumption: admissibility} 
The probabilistic setups 
\begin{align*}
&((\Omega,\ItF,\prob), \F^{p,j}, (\xi^{p,j}, (B,W^{p,j}) ,\chi^{p,\w}))
\end{align*}
 are admissible in the sense of Definition \ref{def: admissibility} for $p = I,S$.
\end{assumption} 

\begin{remark}
\label{rmk: compatibility example}
Assumption \ref{assumption: admissibility} implies some conditions on the couple $(B,C)$:  since $B$ and $C$ are not assumed to be independent, some relation must hold  between $B$ and $C$, in order to guarantee that $B$ is a $\F^{I,j}$-Brownian motion and $C$ does not reveal future realizations of $B$.
\end{remark}

Under Assumption \ref{assumption: admissibility}, we introduce the reduced Hamiltonians $H^p$ of the stochastic optimal control problem for the agent of population $p\in\{I,S\}$
\begin{equation*}
H^{p}(t,x,y, a, \chi^{p,\w}) = y ( a+ l^p(t,\w)) + f^p(t,x,a,\chi^{p,\w}).
\end{equation*}
Due to Assumption \ref{assumption: coefficients}, the reduced Hamiltonian is convex in $a$ and there exists a unique minimizer, having the following structure
\begin{equation}
\label{eqn: optimal controls}
\hat{\alpha}^{p}(\w,y):= -\bar{\Lambda}^p(y+\w),\quad p = I,S.
\end{equation}
 We now introduce the following FBSDE systems, for $p = I,S$: 
\begin{equation}
\label{eqn: FBSDE:N}
\begin{cases}
dX^{p,j}_t = (-\bar{\Lambda}^p(Y^{p,j}_t+\w_t) + l^p(t,{\w}_t))dt + \sigma^{p,0}(t,{\w}_t)dB_t + \sigma^p(t,{\w}_t)dW^{p,j}_t,&\\ 
X^{p,j}_0 = \xi^{p,j},&\\ 
dY^{p,j}_t = - \partial_x \bar{f}^p(t,X^{p,j}_t, \chi^{p,\w}_t)dt + Z^{p,0,j}_t dB_t + Z^{p,j}_t dW^{p,j}_t + dM^{p,j}_t,&\\ 
Y^{p,j}_T= \partial_x g^p(X^{p,j}_T, \chi^{p,\w}_T),
\end{cases}
\end{equation}
where $M^{p,j}$ denotes a \cadlag martingale. 
Let us remark that $M^{p,j}$ appears {because the underlying filtration $\F^{p,j}$ (given by \eqref{eqn: controls space standard} and \eqref{eqn: controls space major}) might be larger than the Brownian filtration generated by $(B,W^{p,j})$, due to the presence of the random environment $\varpi$ and, for the informed agents, $C$.}
The process $Y^{p,j}$ is called the \emph{adjoint process associated with the optimal control problem}, and it might be discontinuous as well. By the compatibility condition ensured by Assumption \ref{assumption: admissibility},  and by the convexity of $H^p$, we can apply the stochastic maximum principle in the version of \cite[Theorem 1.60]{carmona2018probabilistic2} for optimal control problems in a random environment. 

We remark that the control problem for agent $j$ depends on $j$ only through the idiosyncratic noise $W^{p,j}$, and does not depend on the idiosyncratic noises of the other agents. We thus state a general solvability result for \eqref{eqn: FBSDE}, which also implies uniqueness of the optimal control. This result  will be used several times in the rest of the paper, with respect to different admissible probabilistic setups.

\begin{proposition}
\label{lemma: iter random env}
Suppose that Assumption \ref{assumption: coefficients} holds. Fix $p=I,S$ and let
\[
(\Omega,\ItF,\prob, \F^{p}, (\xi^{p}, (B,W^{p}) ,\chi^{p,\w}))
\]
be an admissible probabilistic setup, in the sense of Definition \ref{def: admissibility}. Then the following hold:
\begin{itemize}
    \item[(i)] The FBSDE 
    \begin{equation}
\begin{cases}
\label{eqn: FBSDE}
dX^{p}_t = (-\bar{\Lambda}^p(Y^{p}_t+\w_t) + l^p(t,{\w}_t))dt + \sigma^{p,0}(t,{\w}_t)dB_t + \sigma^p(t,{\w}_t)dW^{p}_t, \quad  
X^{p}_0 = \xi^{p},\\ 
dY^{p}_t = - \partial_x \bar{f}^p(t,X^{p}_t, \chi^{p,\w}_t)dt + Z^{p,0}_t dB_t + Z^{p}_t dW^{p}_t + dM^{p}_t, \quad  
Y^{p}_T= \partial_x g^p(X^{p}_T, \chi^{p,\w}_T),
\end{cases}
\end{equation}
    admits a unique $\F^p$-adapted solution
\[
(X^{p}, Y^{p}, Z^{p,0}, Z^{p}, M^{p}) \in (\mathbb{S}^2(\F^p) )^2 \times (\mathbb{H}^2(\F^p) )^2 \times \mathbb{S}^2(\F^p).
\]
 $X^p$ is continuous and $M^p$ is a martingale orthogonal to 
$(\int_0^t Z^{p,0}_s dB_s + \int_0^tZ^{p}_s dW^{p}_s )_{t\in[0,T]}$ with $M_0=0$. 
Furthermore, the solution processes are adapted to $\F^{\xi^{p}, B,W^{p} ,\chi^{p,\w}}$.

\item[(ii)] The optimal control problem for a standard (if $p=S$) agent, given by \eqref{eqn: state variable standard}-\eqref{eqn: objective functional standard},  or informed (if $p=I$) agent, given by \eqref{eqn: state variable major}-\eqref{eqn: objective functional major}, defined over controls in $\mathbb{H}^2(\F^p)$, admits a unique optimal control in $\mathbb{H}^2(\F^p)$, given by 
\[
\hat{\alpha}^{p}_t := 
\hat{\alpha}^p(\w_t, Y^{p}_t) = 
-\bar{\Lambda}^p(Y^{p}_t + \w_t), \qquad 
t\in[0,T].
\]

\item[(iii)] Fix $t\in [0,T]$ and consider $x_1,x_2\in\R$
and the same admissible probabilistic setup, but on the interval $[t,T]$ and deterministic initial condition; denote by 
$(X^{p,l},Y^{p,l},Z^{p,0,l},Z^{p,l},M^{p,l})$ the solution to the FBSDE as in (i), for $l=1,2$, with initial condition $X^{p,1}_t=x_1$ and $X^{p,2}_t=x_2$. 
Then, there exists a positive constant $\Gamma_p>0$ depending only on the constants in Assumption \ref{assumption: coefficients} and on $T$ 
such that 
\begin{equation}
\label{eqn: iterrandom}
\mathbb{P}\left(|Y^{p,1}_t - Y^{p,2}_t |\leq \Gamma_p |x_1-x_2|\right) = 1.
\end{equation}
\end{itemize} 
\end{proposition}

\begin{proof} 
This is basically a consequence of \cite[Theorem 1.60]{carmona2018probabilistic2}. For completeness, the central point, which is the Lipschitz continuity of the decoupling field, is proven in Appendix \ref{appendix: proof iter random env}.
\end{proof} 

For $p=I,S$ and any $j$, in the rest of this section we let 
$(X^{p,j},Y^{p,j},Z^{p,0,j},Z^{p,j},M^{p,j})$ be the solution to FBSDE \eqref{eqn: FBSDE:N} in the admissible setup $(\Omega,\ItF,\prob, \F^{p,j}, (\xi^{p,j}, (B,W^{p,j}) ,\chi^{p,\w}))$ and let $\hat{\alpha}^{p,j}_t :=-\bar{\Lambda}^p(Y^{p,j}_t + \w_t)$  be the corresponding optimal control.

\subsection{The market clearing condition} 
\label{section: mcc}
We have so far assumed that the price process $\w$ is exogenously given, i.e., every agent considers $\w$ as an exogenous stochastic process which affects the state variable and the objective functional. Under this assumption, we have a family of stochastic optimal control problems which can be solved separately by each agent. As in \cite{fujii2022mean}, we aim at deriving an equation for the equilibrium price process, i.e., the price determined by the \emph{market clearing condition}, which is defined by the balance between demand and supply, and then show the existence of a solution to that equation. In addition, we aim at understanding if the standard agents can deduce some information regarding the strategy of the informed agents through the observation of the price process. In fact, the strategy of the informed agents has an impact on the equilibrium price, so we expect the latter to reveal part of the information embedded in $C$.

{\em Market clearing} in the finite-player model described at the beginning of this section is expressed by the following condition:
\begin{equation}
\label{eqn: market clearing N player}
 \sum_{h= 1}^{N_I} \hat{\alpha}^{I,h}_t + \sum_{j= 1}^{N_S} \hat{\alpha}^{S,j}_t = 0,\quad \text{dt}\otimes \prob\text{-a.e.}
\end{equation}
We now present the following result, which establishes the relationship between the information available to the informed population and that of the standard one, in terms of the filtration generated by the equilibrium price and the common noise. Its proof is straightforward.
\begin{lemma}
\label{lemma: lemma on intersection of filtrations}
In the setting introduced above, the following holds
\begin{displaymath}
\ItF^{\underline{\xi}^S,\w,B,\underline{W}^S}_t\wedge \ItF^{\underline{\xi}^I,\w,B,C,\underline{W}^I}_t = \ItF^{\w,B}_t,\quad t \in[0,T],
\end{displaymath}
where $\underline{W}^p := (W^{p,1},\dots,W^{p,N_p})$ is a Brownian motion and $\underline{\xi}^p=(\xi^{p,1},\dots,\xi^{p,N_p})$.
\end{lemma}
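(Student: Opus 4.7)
The plan is to establish the two inclusions separately. The inclusion $\ItF^{\w,B}_t \subseteq \ItF^{\underline{\xi}^S,\w,B,\underline{W}^S}_t \wedge \ItF^{\underline{\xi}^I,\w,B,C,\underline{W}^I}_t$ is immediate, since $\w$ and $B$ are by definition adapted to both filtrations appearing on the right-hand side, so their generated $\sigma$-algebra is contained in each and hence in the intersection.

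For the reverse inclusion, I would invoke the following standard criterion: if two sub-$\sigma$-algebras $\mathcal{F}_1,\mathcal{F}_2$ of $\ItF$ are conditionally independent given a common sub-$\sigma$-algebra $\mathcal{G}\subseteq \mathcal{F}_1\cap\mathcal{F}_2$, then $\mathcal{F}_1\cap\mathcal{F}_2=\mathcal{G}$ modulo $\prob$-null sets. The one-line argument is that for any $A\in \mathcal{F}_1\cap\mathcal{F}_2$, conditional independence together with $A=A\cap A$ yields $\prob(A\mid\mathcal{G})=\prob(A\mid\mathcal{G})^2$, so $\prob(A\mid\mathcal{G})\in\{0,1\}$ a.s., and $A\in \mathcal{G}$ up to null sets. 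Applying this with $\mathcal{F}_1:=\ItF^{\underline{\xi}^S,\w,B,\underline{W}^S}_t$, $\mathcal{F}_2:=\ItF^{\underline{\xi}^I,\w,B,C,\underline{W}^I}_t$, and $\mathcal{G}:=\ItF^{\w,B}_t$ reduces the lemma to verifying the conditional independence $\mathcal{F}_1\perp\!\!\!\perp \mathcal{F}_2\mid \mathcal{G}$.

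The starting point for this verification is the product structure of $\Omega$ in \eqref{setup}: the block $(\underline{\xi}^S,\underline{W}^S)$ is supported on $\prod_{j=1}^{N_S}\Omega^{S,j}$, which is independent of the complementary factor $\Omega^0\times\prod_{j=1}^{N_I}\Omega^{I,j}$ carrying $(\underline{\xi}^I,\underline{W}^I,B,C)$. This unconditional independence, together with the adaptedness of $\w$ to $\ItF$, is then used to deduce conditional independence of $\mathcal{F}_1$ and $\mathcal{F}_2$ given $\mathcal{G}$, typically by writing a bounded $\mathcal{F}_i$-measurable random variable as a measurable function of its generating processes and factorizing $\mathbb{E}[\,\cdot\mid\mathcal{G}]$ along the two independent blocks.

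The delicate point and main obstacle in this plan is precisely this last factorization step: $\w$ is a priori measurable with respect to the whole filtration $\ItF$, so conditioning on $\w$ could in principle re-introduce coupling between the two blocks of idiosyncratic noise. In the strong finite-player formulation $\w$ is treated as an exogenously given price process observable by every agent, and the lemma should be read in that sense; when $\w$ is the equilibrium price arising from the market-clearing condition \eqref{eqn: market clearing N player}, the required factorization follows from the symmetric aggregate structure of the optimal controls \eqref{eqn: optimal controls}, which makes $\w$ depend on the idiosyncratic noises only through symmetric population-level aggregates and therefore does not transmit information between the standard and informed blocks.
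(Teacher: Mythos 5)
Your first inclusion and the reduction step are fine: the criterion that $\G\subseteq\ItF_1\cap\ItF_2$ together with conditional independence of $\ItF_1,\ItF_2$ given $\G$ forces $\ItF_1\cap\ItF_2=\G$ (mod null sets, which the usual augmentation absorbs) is correct. But the verification of that conditional independence \emph{is} the entire content of the lemma, and you never prove it -- your last paragraph only asserts it. Unconditional independence of the block $(\underline{\xi}^S,\underline{W}^S)$ from $(\underline{\xi}^I,\underline{W}^I,B,C)$ does not survive conditioning on $\w$: $\w$ is a functional of both blocks, and conditioning two independent quantities on a function of both generically couples them (already $U,V$ i.i.d.\ Gaussian fail to be conditionally independent given $U+V$). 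Concretely, the outline cannot be completed at the level of generality at which you (and the statement) work, namely $\w$ a generic $\F$-adapted c\`adl\`ag process: take $\w_t=W^{S,1}_t+W^{I,1}_t$, which is compatible with Assumption \ref{assumption: admissibility}. Then $W^{S,1}$ is adapted to $\F^{\underline{\xi}^S,\w,B,\underline{W}^S}$ by definition and to $\F^{\underline{\xi}^I,\w,B,C,\underline{W}^I}$ because $W^{S,1}=\w-W^{I,1}$, yet $W^{S,1}_t$ is not $\ItF^{\w,B}_t$-measurable; so the intersection strictly contains $\ItF^{\w,B}_t$ and the conditional independence you need cannot hold. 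Any correct argument must therefore isolate and use a structural property of $\w$ beyond $\F$-adaptedness, which your write-up only gestures at.

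Moreover, the property you invoke for the equilibrium case is not a proof and points in the wrong direction. That $\w$ depends on the idiosyncratic noises ``only through symmetric population-level aggregates'' does not imply it ``does not transmit information between the blocks'': for finite $N$ these aggregates are nondegenerate (this is precisely the point the paper stresses about the market clearing condition in a finitely populated market), and an aggregate of the informed block entering $\w$ is exactly a channel through which the standard filtration learns about the informed one. Indeed, the intended use of the lemma in Remark \ref{rmk: measurability of empirical mean} is to conclude that $\sum_{h}Y^{I,h}$ \emph{is} revealed by $(\w,B)$ under market clearing; justifying the factorization $\E[\,\cdot\mid\ItF^{\w,B}_t]$ by claiming no information is transmitted, or by tacitly assuming the informed aggregate is already $(\w,B)$-measurable, would be circular. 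So as it stands the proposal has an essential gap at the conditional-independence step, and the heuristic offered to close it is invalid.
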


\begin{remark}
\label{rmk: measurability of empirical mean}
We recall that $\hat{\alpha}^{p,j}\in \mathbb{H}^2(\F^{p,j})$. Assuming that agents are rational, the market clearing condition \eqref{eqn: market clearing N player} implies that $\sum_{h= 1}^{N_I} \hat{\alpha}^{I,h}_t$ must a posteriori have the same measurability properties as $\sum_{j= 1}^{N_S} \hat{\alpha}^{S,j}_t$. 
Hence, $\sum_{h= 1}^{N_I} \hat{\alpha}^{I,h}_t$ is adapted to $(\ItF^{\underline{\xi}^S,\w,B,\underline{W}^{S}}_t\wedge \ItF^{\underline{\xi}^I,\w,B,C,\underline{W}^{I}}_t)_{t\in[0,T]}$. As a consequence, equation \eqref{eqn: market clearing N player}, together with Lemma \ref{lemma: lemma on intersection of filtrations}, enables us to conclude that $(\sum_{h= 1}^{N_I} \alpha^{I,h}_t)_t$ is adapted to $\F^{\w,B}$, whenever $\w$ satisfies the market clearing condition. 

The market clearing condition \eqref{eqn: market clearing N player}, applied with the optimal controls introduced in \eqref{eqn: optimal controls}, leads to an equation for the equilibrium price process. Indeed, it holds that
\begin{displaymath}
0 = \sum_{h= 1}^{N_I} \hat{\alpha}^{I,h}_t + \sum_{j= 1}^{N_S} \hat{\alpha}^{S,j}_t= -\sum_{h= 1}^{N_I} \bar{\Lambda}^I(Y^{I,h}_t + \w_t) - \sum_{j= 1}^{N_S} \bar{\Lambda}^S(Y^{S,j}_t + \w_t) 
\end{displaymath}
This condition, in turn, leads to the following equation for the equilibrium price process: 
\begin{equation}
\label{eqn: mc price process N}
\w_t = -(n_I\bar{\Lambda}^I + n_S\bar{\Lambda}^S)^{-1} \bl \frac{n_I}{N_I}\bar{\Lambda}^I\sum_{h= 1}^{N_I}Y^{I,h}_t + \frac{n_S}{N_S} \bar{\Lambda}^S\sum_{j= 1}^{N_S}Y^{S,h}_t\br,
\end{equation}
where $n_I = \frac{N_I}{N_I + N_S}$ and $n_S = 1-n_I$.

We point out that, as discussed above, $\sum_{h= 1}^{N_I}Y^{I,h}_t$ is  adapted to $\F^{\w,B}$, if $\varpi$ satisfies  \eqref{eqn: mc price process N}. 
{Since the optimal controls are given by $\hat\alpha^{I,h}_t=-\Lambda^I(Y^{I,h}_t+\varpi_t)$, this means that, if market clearing holds, the average candidate optimal control of the informed population can be inferred from the observation of the equilibrium price process $\varpi$.}
\end{remark}

\section{The mean-field model}
\label{section: mfl}
As a consequence of the market clearing condition, an equilibrium price defined as solution to \eqref{eqn: mc price process N} makes the optimal control problems introduced in \S \ref{section: setup} highly recursive. The complexity of the problem is due to the presence of the idiosyncratic noises as well as the asymmetry in the information. To overcome the issue related to the presence of the idiosyncratic noises, we exploit the fact that the agents are price takers and symmetric within the same population (i.e., agents within the same population solve the same optimal control problem, up to their pairwise independent idiosyncratic terms). This implies that the effect of the trading activities of each single agent becomes negligible when $N$ becomes large. 

{We give here a formal justification of} the mean-field limit of \eqref{eqn: mc price process N}. To this end, we apply the Yamada-Watanabe representation theorem in the version of \cite[Theorem 1.33]{carmona2018probabilistic2}, which states that there exist progressively measurable functions 
\begin{equation}
\label{eqn: YW functions}
\begin{cases}
\Psi^S: \R \times \mathcal{C}([0,T],\R^2) \times \mathcal{D}([0,T],\R)\to\mathcal{C}([0,T],\R) \times\mathcal{D}([0,T],\R)\times\mathcal{C}([0,T],\R^2)\times\mathcal{D}([0,T],\R),\\
\Psi^I: \R \times \mathcal{C}([0,T],\R^2) \times \mathcal{D}([0,T],\R^2)\to\mathcal{C}([0,T],\R) \times\mathcal{D}([0,T],\R)\times\mathcal{C}([0,T],\R^2)\times\mathcal{D}([0,T],\R) ,\\
\end{cases}
\end{equation}
such that the adjoint process $Y^{p,j}$ of the $j$-th agent of population $p$, with respect to an exogenously given price process $\w$, satisfies
\begin{equation}
\label{eqn: YW representation of FBSDE}
\Big(X^{p,j},Y^{p,j} , {\Big(\int_0^t(Z^{p,0,j}_s,Z^{p,j}_s) ds \Big)_{0\leq t\leq T} },M^{p,j}\Big)= \Psi^p(\xi^{p,j}, B, W^{p,j}, \chi^{p,\w}),\qquad p = I,S.
\end{equation}
{The two-dimensional component in (3.2) corresponds to the time integrals of $Z^{p,0,j}$ and $Z^{p,j}$.}
Since $(\xi^{p,j})_{j\in N_p}$ is a sequence of i.i.d. random variables and $(W^{p,j})_{j\in N_p}$ is a sequence of pairwise independent Brownian motions, independent of $(\xi^{p,j})_{j\in N_p}$, the sequences $(\xi^{p,j},B, W^{p,j}, \chi^{p,\w})_{j\in\N}$ are exchangeable (see, e.g.,  \cite[Definition 12.1]{klenke2013probability}). Therefore, the sequences $(Y^{p,j}_t)_{j = 1}^{N_p}$ defined by \eqref{eqn: YW representation of FBSDE} are exchangeable too. Applying De Finetti's representation theorem, we have
\begin{displaymath}
\lim_{N_p\rightarrow \infty} \frac{1}{N_p}\sum_{h = 1}^{N_p} Y^{p,h}_t = \E\bblq Y^{p,1}_t\Biggm| \bigcap_{j\geq 1} \sigma\{Y^{p,k}_t,\ k\geq j \}\bbrq\quad \mathbb{P}-a.s.
\end{displaymath}
{This suggests that the tail sigma-algebra $\bigcap_{j\geq 1} \sigma\{Y^{p,k}_t,\ k\geq j \}$ should be identified with the sigma-algebra generated by the common stochastic factors of the random variables $Y^{p,k}_t$, namely $\varpi_{\cdot\wedge t}$ and $B_{\cdot\wedge t}$ for the standard agents, and $\varpi_{\cdot\wedge t}$, $B_{\cdot\wedge t}$ and $C_{\cdot\wedge t}$ for the informed agents.}
As a consequence, it seems natural to suppose that
\begin{align*} 
\lim_{N_S \to\infty} \frac{1}{N_S}\sum_{h = 1}^{N_S} Y^{S,h}_t = \E\blq Y^{S,1}_t\bigm| \ItF^{\w,B}_t\brq, \qquad 
\lim_{N_I \to\infty} \frac{1}{N_I}\sum_{h = 1}^{N_I} Y^{I,h}_t = \E\blq Y^{I,1}_t\bigm| \ItF^{\w,B,C}_t\brq.
 \end{align*}
By relying on this substitution, we can consider a limit market model populated by a single typical standard agent and a typical informed agent. 
{Assuming $N_p/N \to n_p$, for $p=I,S$}, we can pass to the mean-field limit of equation \eqref{eqn: mc price process N}, in a suitable probabilistic setup $\setup$: 
\begin{equation}
\label{eqn: mc price process mf} 
\w_t = -(n_I\bar{\Lambda}^I + n_S\bar{\Lambda}^S)^{-1} \bl n_I\bar{\Lambda}^I\E\blq Y^{I}_t\bigm|\ItF^{\w,B,C}_t\brq + n_S \bar{\Lambda}^S\E\blq Y^{S}_t\bigm| \ItF^{\w,B}_t\brq\br,\   \forall t\in[0,T],\  \mathbb{P}-a.s.
\end{equation}
In \eqref{eqn: mc price process mf}, $Y^p$ is the adjoint process associated with the optimal control problem of a typical agent of population $p\in\{I,S\}$ in the mean-field limit.

We can observe that, in analogy to Remark \ref{rmk: measurability of empirical mean}, the observation of $\w$ given as a solution of \eqref{eqn: mc price process mf} allows the typical standard agent to infer $\E[ Y^{I}_t\mid\ItF^{\w,B,C}_t]$. Moreover, taking conditional expectation with respect to $\ItF^{\w,B}_t$ in \eqref{eqn: mc price process mf}, we deduce that when a solution $\w$ to \eqref{eqn: mc price process mf} exists, then it holds that
\begin{equation}
\label{eqn: mc price process mf v2} 
\w_t = -(n_I\bar{\Lambda}^I + n_S\bar{\Lambda}^S)^{-1} \E\blq n_I\bar{\Lambda}^I Y^{I}_t+n_S\bar{\Lambda}^S  Y^{S}_t\bigm| \ItF^{\w,B}_t\brq, \qquad  \forall t\in [0,T], \quad \mathbb{P}-a.s.
\end{equation}

\begin{remark}
{Equation \eqref{eqn: mc price process mf} is the natural mean-field counterpart of the finite-player market clearing relation, since it retains the dependence of the informed population on the  filtration $\mathbb F^{\varpi,B,C}$. In contrast, \eqref{eqn: mc price process mf v2} is obtained by conditioning \eqref{eqn: mc price process mf} with respect to the smaller filtration $\mathbb F^{\varpi,B}$ and is introduced because it is more tractable for the analysis developed below. 
Indeed, as explained in Remarks \ref{rmk: extra assumption for asymptotic mk clearing} and \ref{rem: 6.3}, existence of a price process satisfying \eqref{eqn: mc price process mf} appears out of reach with the present method. 
If $\varpi$ satisfies \eqref{eqn: mc price process mf v2}, then the gap between the right-hand side of \eqref{eqn: mc price process mf} and $\varpi_t$ is
\[
-(n_I\Lambda^I+n_S\Lambda^S)^{-1} n_I\Lambda^I
\Big(\E\blq Y_t^I\bigm| \mathcal F_t^{\varpi,B,C}\brq -
\E\blq Y_t^I\bigm| \mathcal F_t^{\varpi,B}\brq
\Big).
\]
Therefore, the gap between \eqref{eqn: mc price process mf} and \eqref{eqn: mc price process mf v2} is entirely due to the content of the informed adjoint that is revealed by the private signal $C$ but not by the public filtration $\mathbb F^{\varpi,B}$. 
In this sense, \eqref{eqn: mc price process mf v2} should be viewed as the public-information projection of \eqref{eqn: mc price process mf} and the price process obtained from Definition \ref{def: mf price process} need not satisfy \eqref{eqn: mc price process mf}.} 
\end{remark}

\subsection{Definition of solution}
\label{section: definition of solution} 
We aim at proving the existence of a solution to \eqref{eqn: mc price process mf v2}. Hence, we introduce a general setup for defining a mean-field equilibrium, which is the main object of our analysis. 
We proceed as follows: 
\begin{enumerate}[label=\textbf{S-\Roman*},ref={S-\Roman*}]
\item \label{item: ch2 step 1 fixed point procedure} We consider a filtered probability space $\setup$, of the form
\begin{equation}
\label{eqn: Omega}
\Omega :=  \Om^0 \times \Om^I\times\Om^S,
\end{equation}
carrying a stochastic process $\w$, an initial distribution $\xi^p$ and a Brownian motion $W^p$, defined on $\Omega^p$ for the typical agent of population $p=I,S$ and a Brownian motion $B$ defined on $\Omega^0$. We assume that there exist two sub-filtrations  $\F^I$ and $\F^S$ such that the probabilistic setups $\setupi$ and $\setupn$ are admissible. In view of Proposition \ref{lemma: iter random env}, on these probabilistic setups the optimal control problems of \S \ref{section: lia} and \S \ref{section: mia} admit a solution, denoted by 
\begin{equation}
\label{eqn: solutions FBSDEs}
(\Omega, \ItF, \prob, \F^p, X^p ,Y^p ,Z^{p,0}, Z^p ,M^p),\quad p = I,S. 
\end{equation}
\item \label{item: ch2 step 2 fixed point procedure} We consider the functional $\Phi$ given by  
\begin{equation}
\label{eqn: fixedpointfunctional}
\Phi_t(\w) := -(n_I\bar{\Lambda}^I + n_S\bar{\Lambda}^S)^{-1} \E\blq n_I\bar{\Lambda}^I Y^{I}_t+ n_S\bar{\Lambda}^S  Y^{S}_t\bigm| \ItF^{\w,B}_t\brq,\quad t\in[0,T].
\end{equation}
\end{enumerate}
We aim at proving the existence of a stochastic process $\wmf$ such that: $\Phi_t(\wmf)=\wmf_t$ $\mathbb{P}$-a.s. for every $t\in[0,T]$.
The existence of a fixed point is not straightforward due to the dependence of both $Y^I$ and $Y^S$ on the unknown stochastic process $\wmf$,  as well as the presence of $\wmf$ in the filtration. To overcome this difficulty, we exploit an analogy between the structure of equation \eqref{eqn: mc price process mf v2} and the consistency condition for a weak mean-field game equilibrium in the presence of common noise. This condition, introduced in \cite[Definition 3.1]{carmona2016mean} (see also \cite[Definition 2.24]{carmona2018probabilistic2} and \cite[Definition 2.1]{lacker2016general}) is defined on a canonical space carrying suitable random processes, by imposing that the equilibrium $\mu$ is defined by: 
\begin{equation}
\label{eqn: Lacker weak mf eq}
\mu = {\mathcal{L}}(W,X \mid B, \mu),
\end{equation}
where $W$ is the idiosyncratic noise, $X$ is the optimal state variable and $B$ is the common noise. 

Building upon the results  in \cite[Chapter III]{carmona2018probabilistic2}, which are related to \cite{carmona2016mean}, we develop a forward-backward formulation for the optimal control problems of each agent, extending their applicability to our specific framework. In \cite[Chapter III]{carmona2018probabilistic2}, the construction of the solution to \eqref{eqn: Lacker weak mf eq} is performed by discretizing the canonical space on which the common noise is defined, in order to obtain a sequence of approximated solutions. Following this approach, we consider a random process $\w$ taking values on a suitable functional space $\mathcal{H}$. As discussed at the beginning of this section, if $\w$ is supposed to be exogenously given, the solutions of the optimal control problems exist and are determined by a progressively measurable functional $\Psi^p$ (introduced in equation \eqref{eqn: YW functions}) such that the processes $\FBSDEn$ and $\FBSDEi$, defined in \eqref{eqn: solutions FBSDEs}, can be expressed as in \eqref{eqn: YW representation of FBSDE} where $(\xi^{p,j} ,W^{p,j})$ is replaced by $(\xi^p,W^p)$ for $p\in\{I,S\}$. 
Due to the presence of the \cadlag martingales terms in the dynamics of $Y^I$ and $Y^S$, we consider $\mathcal{H}=\mathcal{D}([0,T],\R)$. We aim at finding a fixed point of the functional \eqref{eqn: fixedpointfunctional} defined on the space $\mathcal{D}([0,T],\R)^{\Omega^0}$. As discussed in \cite[Chapter II]{carmona2018probabilistic2}, when $\Om^0$ is not countable, the compact sets of $\mathcal{D}([0,T],\R)^{\Om^0}$ cannot be easily characterized. As a consequence, we cannot use standard fixed-point arguments, like Schauder's theorem, to provide a solution to equation \eqref{eqn: mc price process mf v2}. To overcome this problem, we adapt the strategies described in \cite[Chapter III]{carmona2018probabilistic2} and \cite[Section 3]{carmona2016mean} and proceed as follows:
\begin{enumerate}
\item We consider an admissible probabilistic setup of the form \eqref{eqn: Omega} on which a random process $(\xi^I,\xi^S, B,W^I,W^S,C)$ is defined. {We discretize the trajectories of the common noise $B$ in space and time, with parameters $l$ and $n$, respectively.
We then construct a fixed point on a finite product of copies of $\mathcal{D}([0,T],\mathbb R)$, indexed by the discretized paths of $B$. By applying Schauder's fixed point theorem, we can then construct a solution to the fixed point problem.}
\item We apply the previous step for each $n$ and $l$ to obtain a sequence of approximated solutions. We then prove that this sequence is tight and determine the conditions which ensure that the weak limit of this sequence solves \eqref{eqn: mc price process mf v2}. 
\end{enumerate}
\begin{remark}[Compatibility condition]
\label{rmk: problem compatibility condition}
The procedure described above to construct the solution to equation \eqref{eqn: mc price process mf v2} involves the issue of compatibility. Indeed, as described in \cite[Section 2.2.2]{carmona2018probabilistic2}, it is not sufficient to require the compatibility condition for the optimal control problems in the discretized setting, because compatibility is in general not preserved when passing to the limit in distribution. As we show in \S \ref{section: compatibility condition weak limit}, we have to lift the sequence of fixed points obtained in the discretized space in a suitable way, in order to ensure that compatibility is preserved in the weak limit. As a consequence, we can formulate the optimal control problems for the two typical agents within the space where the weak limit is defined. This is essential to show that the weak limit can be expressed in terms of the adjoint processes derived from the stochastic maximum principle applied to the optimal control problem on that space.
 \end{remark}
 
In view of Remark \ref{rmk: problem compatibility condition}, we need to change the structure of the optimal control problems considered in \ref{item: ch2 step 1 fixed point procedure}. We must enlarge the filtrations to which the controls of the two typical agents are adapted in order to ensure the compatibility condition.
%

{We are now in the position to introduce the notion of weak {\em lifted} mean-field equilibrium. We emphasize that this notion is not the direct mean-field counterpart of exact finite-player market clearing and, therefore, should not be interpreted as an exact market clearing equilibrium in the economic sense. Rather, it is a weak formulation motivated by the market clearing relation and chosen so as to admit an existence theory in a probabilistic weak sense. In particular, the consistency condition \eqref{eqn: mc price process mf v2} required in Definition \ref{def: mf price process} should be viewed as a tractable relaxation of condition \eqref{eqn: mc price process mf}, for which existence is not established in our work (see also Remark \ref{rem: 6.3}).}

\begin{defin} 
\label{def: mf price process}
We say that
\begin{equation}
(\Omega, \ItF, \prob, \F, (\xi^I, \xi^S), (B,W^I,W^S), (\w,C, \bar{Y}^I, \bar{Y}^S))
\end{equation}
is a \emph{weak lifted mean-field equilibrium} if the following four conditions hold:
\begin{enumerate}
\item $\F := \F^I \vee \F^S$, where 
\begin{equation}
\label{eqn: lifted filtrations}
\F^I := \F^{\xi^{I},(\w,\bar{Y}^I),B,C,W^{I}} 
\quad\text{ and }\quad
\F^S := \F^{\xi^S,(\w, \bar{Y}^S), B, W^{S}}.
\end{equation}
\item For $p=I,S$, $(\Omega, \ItF, \prob, \F^p)$ is an admissible probabilistic setup in the sense of Definition \ref{def: admissibility}, carrying the process 
\begin{equation*}
\theta^p := ( \xi^p, (B,W^p), (\chi^{p,\w},\bar{Y}^p)).
\end{equation*}
\end{enumerate}
In these admissible setups, let $(X^p, Y^p,Z^{p,0}, Z^p, M^p)$ be the solution to the FBSDE \eqref{eqn: FBSDE}, provided by Proposition \ref{lemma: iter random env}.\footnote{
Note that there is no additional ($\overline{Y}^S, \overline{Y}^I$) in the admissible setup in that proposition, but the results apply in the same way.  
}
\begin{itemize}
\item[(3)] The process $\w$ satisfies the consistency condition for the equilibrium price process defined by equation \eqref{eqn: mc price process mf v2}. 
\item[(4)] For $p=I,S$: 
\begin{equation}
\bar{Y}^p_t = Y^p_t, \qquad  \forall t\in [0,T], \quad  \mathbb{P}-a.s.
\end{equation} 
\end{itemize}
\end{defin}

The main result of the paper is the following theorem, proved in Section \ref{section: existence of solutions}.
\begin{theorem}
\label{thm: existence of solutions}
Under Assumption \ref{assumption: coefficients}, 
there exists a weak lifted mean-field equilibrium  in the sense of Definition \ref{def: mf price process}.
\end{theorem}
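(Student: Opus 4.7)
Following the roadmap described in \S\ref{section: mfl}, the plan is to construct the equilibrium as a weak limit of a doubly-indexed family of finite-dimensional fixed points obtained by discretizing the common Brownian motion $B$. At each level $(n,l)$ I would partition time into $l$ equal subintervals and the range of $B$ into $n$ cells, yielding a finite set $\tilde{\mathbb{J}}$ of possible discretized trajectories. Conditioning on a discrete trajectory $\mathbf{j}\in\tilde{\mathbb{J}}$ reduces the search for a fixed point of the functional $\Phi$ from \eqref{eqn: fixedpointfunctional} to a fixed point problem on the finite product $\cadlagR^{\tilde{\mathbb{J}}}$. For a candidate family $(\w^{n,l}_{\mathbf{j}})_{\mathbf{j}\in\tilde{\mathbb{J}}}$, each coordinate can be fed as a deterministic input into the Yamada--Watanabe functionals $\Psi^p$ of \eqref{eqn: YW functions} to produce the associated FBSDE solutions; the conditional expectations appearing in \eqref{eqn: fixedpointfunctional} then reduce to finite averages over $\tilde{\mathbb{J}}$. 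Combining the uniform $\mathbb{S}^2\times\mathbb{H}^2$ estimates from Proposition~\ref{lemma: iter random env} with the Meyer--Zheng compactness recalled in Appendix~\ref{Appendix:MZ}, I would restrict $\Phi$ to a convex compact subset of $\cadlagR^{\tilde{\mathbb{J}}}$ and invoke Schauder's theorem to obtain a fixed point $\w^{n,l}$. Compatibility at this discrete stage is automatic because the common noise is observed only through finitely many cell indicators.

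\textbf{Passage to the limit.} In step two I would show that the joint laws of
$(\xi^I,\xi^S,B,W^I,W^S,C,\w^{n,l},\bar Y^{I,n,l},\bar Y^{S,n,l},X^{I,n,l},X^{S,n,l},M^{I,n,l},M^{S,n,l})$, with $\bar Y^{p,n,l}:=Y^{p,n,l}$ the adjoint process of the discrete fixed point, are tight and extract a subsequential weak limit on a suitable extended canonical space. The continuous components (initial data, Brownian motions, forward state variables) are carried by the uniform topology on $\contR$, while the possibly discontinuous processes $(\w^{n,l},\bar Y^{p,n,l},M^{p,n,l})$ are carried by the Meyer--Zheng topology on $\mathcal{M}([0,T],\R)$; the uniform $L^2$ bounds together with the BSDE driver bounds supplied by Assumption~\ref{assumption: coefficients} provide the quasi-martingale control needed for Meyer--Zheng compactness. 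On the limit setup I then define $\F^I$ and $\F^S$ as in \eqref{eqn: lifted filtrations}, and verify admissibility: since the lifted process $\bar Y^p$ is already part of the observation vector, the immersion property of $\F^p$ into the global filtration $\F$ passes to the limit, and the fact that $(B,W^I,W^S)$ remain Brownian motions in $\F$ follows from stability of orthogonal martingale relations under weak convergence.

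\textbf{Identification and main obstacle.} Once admissibility is in hand, Proposition~\ref{lemma: iter random env} guarantees that FBSDE \eqref{eqn: FBSDE} admits on the limit setup a unique adapted solution $(X^p,Y^p,Z^{0,p},Z^p,M^p)$, and I would identify $Y^p$ with $\bar Y^p$ by passing to the limit in the integrated form of the BSDE at the discrete level, tested against bounded continuous functionals of the base noises; the Meyer--Zheng convergence of $\bar Y^{p,n,l}$ together with the $\mathbb{H}^2$ bounds on $(Z^{0,p,n,l},Z^{p,n,l},M^{p,n,l})$ makes the identification rigorous. The consistency condition \eqref{eqn: mc price process mf v2} is then obtained by passing to the limit in the discrete analogue of \eqref{eqn: fixedpointfunctional}, using admissibility to commute the weak limit with the conditional expectation against $\ItF^{\w,B}_t$. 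The main obstacle is precisely this last identification: the adjoint processes $Y^p$ are only càdlàg and enter the fixed-point equation through a conditional expectation with respect to a filtration generated by the (unknown) fixed point itself, so neither the Skorokhod topology nor a naive closure of the prelimit filtration is sufficient; it is the combination of the lifting, which retains enough information about $\bar Y^{p,n,l}$ in the limit filtration to preserve immersion, and the Meyer--Zheng framework, which delivers pre-compactness despite the discontinuities of the costate and martingale parts, that makes the whole argument go through.
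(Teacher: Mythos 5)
Your overall architecture (discretize the common noise, Schauder at the discrete level, Meyer--Zheng tightness, lifting by the costates, identification and consistency in the limit) is the paper's, but two steps that carry the real weight are not justified as written. First, at the discrete stage you invoke ``Meyer--Zheng compactness'' together with uniform $\mathbb{S}^2\times\mathbb{H}^2$ bounds to produce a convex compact set on which to apply Schauder. Meyer--Zheng compactness is a tightness criterion for \emph{laws} of processes; it says nothing about compactness of the deterministic family $\boldsymbol{\Phi}(\btheta)$, $\btheta\in\prodspace$, in a topology for which $\boldsymbol{\Phi}$ is continuous. In the paper the fixed point lives in $\prodspace$ with the sup metric \eqref{eqn: metric prod input-output}; relative compactness of the image is obtained by Ascoli--Arzel\`a from the uniform bound \eqref{eqn: pointwise boundedness varphi} and the equi-Lipschitz-in-time estimate \eqref{eqn: equi-continuity varphi} (both coming from the boundedness of $\partial_x\bar f^p$ and $\partial_x g^p$), and the continuity of $\boldsymbol{\Phi}$ is itself a nontrivial FBSDE stability argument (Proposition \ref{proposition: continuity of bPhi}) relying on the Lipschitz decoupling field of Proposition \ref{lemma: iter random env}(iii) and on conditioning on the atoms $\{\bar V_i=\bv_i\}$, which have positive probability. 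Your proposal supplies neither the equicontinuity nor the continuity argument, so Proposition \ref{prop:existence_discrete} is not reproduced.

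Second, your verification of admissibility in the limit rests on the assertion that the immersion property ``passes to the limit'' because $\bar Y^p$ is part of the observation vector; but non-preservation of compatibility under weak limits is precisely the difficulty (Remark \ref{rmk: problem compatibility condition}), and the lifting alone does not make immersion weakly stable. The paper's actual mechanism (Lemma \ref{lemma: compatibility condition limit ocp} together with Proposition \ref{prop: Xpinf satisfies FSDE}, proved in Appendix \ref{appendix: measurability of Xinf}) is to show that $X^{p,\infty}$ solves the forward SDE driven by $\hat\alpha^{p,\infty}=-\bar\Lambda^p(Y^{p,\infty}+\varpi^\infty)$ --- which requires a convergence result for stochastic integrals whose integrands converge only in the Meyer--Zheng sense --- so that $X^{p,\infty}$ is adapted to $\F^{\tilde{\Theta}^{p,\infty}}$; then $\F^{p,\infty}=\F^{\tilde{\Theta}^{p,\infty}}$ and compatibility follows trivially from adaptedness. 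Without this step your limit filtration contains $X^{p,\infty}$ and you have no argument for compatibility, hence no right to invoke Proposition \ref{lemma: iter random env} on the limit setup. Relatedly, the paper identifies $\bar Y^p$ with the adjoint process via optimality and uniqueness of the limit control (Proposition \ref{prop: stability weak equilibria}), not by passing to the limit in the integrated BSDE; your direct route might be salvageable because the driver does not depend on $(Y,Z)$, so $Y^p$ admits a conditional-expectation representation, but then you must establish that representation with respect to the limit filtration by testing against functionals along a full-measure set of times (as is done for the consistency condition in Theorem \ref{thm: consistency condition weak limit}); neither ``commuting the weak limit with the conditional expectation'' nor the $\mathbb{H}^2$ bounds on $(Z^{0,p,n},Z^{p,n},M^{p,n})$ controls the martingale parts under Meyer--Zheng convergence.
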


We remark that the above existence result requires weaker assumptions than those of \cite{fujii2022mean}. In this sense, even without asymmetric information, our result is more general, although we prove existence of weak rather than strong solutions. At our level of generality, there is no uniqueness result, as we do not assume any form of monotonicity condition. 

\subsection{Existence of stronger equilibria under suitable conditions}
\label{section: modification canonical space}

As pointed out above, Definition \ref{def: mf price process} involves additional information given by the processes $\overline{Y}^I, \overline{Y}^S$, which are part of the definition. 
However, for the application of the mean-field equilibrium in the analysis of a weak version of the market clearing condition in the economy with $N$ agents, we need an unlifted version of the definition in which the environment is given by $\chi^{p,\varpi}$ only, as we will discuss in detail in \S \ref{section: weak market clearing}. To guarantee this property, we need an additional assumption.

 \begin{assumption}  
 \label{assumption: affine target functions} For $p=I,S$, the functions $\bar{f}^p$ and $g^p$ introduced in \eqref{eqn: objective functional standard} and \eqref{eqn: objective functional major} satisfy \begin{equation*}
 \bar{f}^p(t,x,\chi^{p,\w})= x c^p(t,\chi^{p,\w}) \qquad \mbox{ and } \quad g^p(x,\chi^{p,\w}) = x \bar{g}^p(\chi^{p,\w}),
 \end{equation*}
 for suitable continuous and bounded functions $c^p$ and $\bar{g}^p$. 
 \end{assumption}

{Assumption \ref{assumption: affine target functions} is introduced for technical reasons, in order to obtain the unlifted weak equilibrium of Theorem \ref{thm: existence of unlifted mean-field price process} and the subsequent asymptotic market clearing analysis of Section \ref{section: weak market clearing}.
From a financial viewpoint, Assumption \ref{assumption: affine target functions} amounts to assuming that the position-dependent running and terminal terms are linear cashflows per unit of the asset held. For the standard agents, these cashflows depend only on $\varpi$, while for the informed agents they may also depend on the private signal $C$. 
The assumption is economically restrictive, since in this specification the adjoint process no longer depends on the state variable and, therefore, the corresponding optimal trading rule does not capture the full inventory risk management channel of intermediaries. 
The main reason for introducing this assumption is that it enables us to establish the unlifted formulation of equilibrium needed for the asymptotic analysis of Section \ref{section: weak market clearing}.
}

{We now introduce a weak {\em unlifted} notion of mean-field equilibrium. As in Definition \ref{def: mf price process}, this notion is not the direct counterpart of exact finite-player market clearing, but rather a weak formulation motivated by it and adapted to the analysis developed below. The notion is weak, since the probability space and the noises are part of the definition, but it is unlifted. Also in this case, the consistency condition is weaker than the original market clearing relation \eqref{eqn: mc price process mf}.}

\begin{defin}
\label{def: unlifted mf price process}
We say that:
\begin{equation}
\big(\Omega, \ItF, \prob, \tilde{\F}, (\xi^I, \xi^S), (B,W^I,W^S), (\w,C ) \big)
\end{equation}
is a \emph{weak (unlifted) mean-field equilibrium} if the following three conditions hold:
\begin{enumerate}
\item $\tilde{\F} := \tilde\F^I \vee \tilde\F^S$, where $ \tilde\F^I= \F^{\xi^I, B,W^I, \chi^{I,\w}}$ and $\tilde\F^S= \F^{\xi^S, B,W^S, \chi^{S,\w}}$.
\item For $p=I,S$, $(\Omega, \ItF, \prob, \tilde{\F}^p)$ is an admissible probabilistic setup in the sense of Definition \ref{def: admissibility}, carrying the process 
$
 ( \xi^p, (B,W^p), \chi^{p,\w}).
$
\end{enumerate}
In these admissible setups, let $(\tilde{X}^p, \tilde{Y}^p,\tilde{Z}^{p,0}, \tilde{Z}^p, \tilde{M}^p)$ be the solution to the FBSDE \eqref{eqn: FBSDE}, provided by Proposition \ref{lemma: iter random env}.
\begin{itemize}
\item[(3)] The process $\w$ satisfies the following consistency condition:
\begin{equation}
\label{eqn: conclusion winf projected}
\w_t = -\constantni\E\big[ n_I\bar{\Lambda}^I\tilde{Y}^{I}_t + n_S \bar{\Lambda}^S\tilde{Y}^{S}_t \bigm| \ItF^{\w, B}_t\big],\quad t\in[0,T], \ \mathbb{P}-a.s.
\end{equation}
\end{itemize}
\end{defin}

We remark that a weak mean-field equilibrium can be transferred to a canonical space of the form of \eqref{eqn: Omega}; see Section \ref{section: weak market clearing}. We can prove the following result: 
\begin{theorem} 
\label{thm: existence of unlifted mean-field price process}
Under Assumption \ref{assumption: coefficients} and \ref{assumption: affine target functions}, there exists a weak (unlifted)  mean-field equilibrium  in the sense of Definition \ref{def: unlifted mf price process}.
\end{theorem}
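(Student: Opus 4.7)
The plan is to deduce the existence of a weak unlifted equilibrium directly from the weak lifted equilibrium supplied by Theorem \ref{thm: existence of solutions}, by exploiting the linear structure of the backward dynamics under Assumption \ref{assumption: affine target functions} in order to ``forget'' the lift. First I will verify that Assumption \ref{assumption: affine target functions} implies Assumption \ref{assumption: coefficients}: since $c^p$ and $\bar g^p$ are continuous and bounded, the functions $\bar f^p(t,x,\chi^{p,\w})=xc^p(t,\chi^{p,\w})$ and $g^p(x,\chi^{p,\w})=x\bar g^p(\chi^{p,\w})$ are linear, hence convex, in $x$, and their $x$-derivatives are bounded and $0$-Lipschitz in $x$, with the required growth and regularity. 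Hence Theorem \ref{thm: existence of solutions} yields a weak lifted equilibrium
\[
\big(\Omega,\mathcal F,\mathbb P,\mathbb F,(\xi^I,\xi^S),(B,W^I,W^S),(\w,C,\bar Y^I,\bar Y^S)\big),
\]
with filtrations $\mathbb F^p$ as in \eqref{eqn: lifted filtrations} and associated FBSDE solutions $(X^p,Y^p,Z^{0,p},Z^p,M^p)$ satisfying $\bar Y^p=Y^p$ for $p=I,S$.

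Next I will remove the lift. Set $\tilde{\mathbb F}^p:=\mathcal F^{\xi^p,B,W^p,\chi^{p,\w}}$ and $\tilde{\mathbb F}:=\tilde{\mathbb F}^I\vee\tilde{\mathbb F}^S$. Since $\tilde{\mathbb F}^p\subseteq\mathbb F^p$ and $B,W^p$ are adapted to $\tilde{\mathbb F}^p$, the martingale property is preserved under restriction, and they remain $\tilde{\mathbb F}^p$-Brownian motions. As the process $\tilde\theta^p:=(\xi^p,(B,W^p),\chi^{p,\w})$ generates $\tilde{\mathbb F}^p$, compatibility of $\tilde\theta^p$ with $\tilde{\mathbb F}^p$ is automatic, so $(\Omega,\mathcal F,\mathbb P,\tilde{\mathbb F}^p,\tilde\theta^p)$ is admissible in the sense of Definition \ref{def: admissibility}. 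Proposition \ref{lemma: iter random env} then supplies a unique solution $(\tilde X^p,\tilde Y^p,\tilde Z^{0,p},\tilde Z^p,\tilde M^p)$ of the FBSDE \eqref{eqn: FBSDE} on this smaller setup.

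The heart of the argument is to prove that $\w$ still satisfies the consistency condition \eqref{eqn: conclusion winf projected}. Under Assumption \ref{assumption: affine target functions}, the driver $\partial_x\bar f^p(t,x,\chi^{p,\w})=c^p(t,\chi^{p,\w})$ and the terminal condition $\partial_x g^p(x,\chi^{p,\w})=\bar g^p(\chi^{p,\w})$ are independent of $x$, so both backward equations linearize; setting $\zeta^p_t:=\bar g^p(\chi^{p,\w}_T)+\int_t^T c^p(s,\chi^{p,\w}_s)\,ds$, their unique solutions admit the explicit representations
\[
Y^p_t=\mathbb E[\zeta^p_t\mid\mathbb F^p_t], \qquad \tilde Y^p_t=\mathbb E[\zeta^p_t\mid\tilde{\mathbb F}^p_t].
\]
Because $\mathcal F^{\w,B}_t\subseteq\tilde{\mathbb F}^p_t\subseteq\mathbb F^p_t$, the tower property gives
\[
\mathbb E[Y^p_t\mid\mathcal F^{\w,B}_t]=\mathbb E[\tilde Y^p_t\mid\mathcal F^{\w,B}_t]=\mathbb E[\zeta^p_t\mid\mathcal F^{\w,B}_t].
\]
Substituting this identity into the lifted consistency condition \eqref{eqn: mc price process mf v2} yields exactly \eqref{eqn: conclusion winf projected}, so $(\Omega,\mathcal F,\mathbb P,\tilde{\mathbb F},(\xi^I,\xi^S),(B,W^I,W^S),(\w,C))$ is a weak unlifted mean-field equilibrium in the sense of Definition \ref{def: unlifted mf price process}.

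The main obstacle is conceptual rather than technical: the argument collapses without linearity of the costs in $x$, because then $Y^p$ would genuinely depend on $X^p$ through a coupled FBSDE, the explicit conditional-expectation form would fail, and one could no longer identify $\mathbb E[Y^p_t\mid\mathcal F^{\w,B}_t]$ with $\mathbb E[\tilde Y^p_t\mid\mathcal F^{\w,B}_t]$. Assumption \ref{assumption: affine target functions} is precisely what closes this gap by reducing each backward equation to a conditional expectation of a quantity that depends only on the common environment $\chi^{p,\w}$.
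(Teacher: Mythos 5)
Your proposal is correct and follows essentially the same route as the paper: take the lifted equilibrium from Theorem \ref{thm: existence of solutions}, note that under Assumption \ref{assumption: affine target functions} both $Y^p$ and $\tilde Y^p$ are the conditional expectations of the same integrand $\bar g^p(\chi^{p,\w}_T)+\int_t^T c^p(s,\chi^{p,\w}_s)\,ds$ with respect to the lifted and unlifted filtrations, and conclude \eqref{eqn: conclusion winf projected} from \eqref{eqn: mc price process mf v2} by the tower property. Your preliminary check that the affine costs meet Assumption \ref{assumption: coefficients} is harmless but redundant, since the theorem assumes both hypotheses.
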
 

\begin{proof}
By Theorem \ref{thm: existence of solutions}, a weak mean-field equilibrium 
\[
(\Omega, \ItF, \prob, \F, (\xi^I, \xi^S), (B,W^I,W^S), (\w,C, \bar{Y}^I, \bar{Y}^S))
\] 
satisfying Definition \ref{def: mf price process} exists. 
Let  $\tilde\F^I= \mathbb{F}^{\xi^I, B,W^I, \chi^{I,\w}}$ and $\tilde\F^S= \mathbb{F}^{\xi^S, B,W^S, \chi^{S,\w}}$; it is clear that  $(\Omega,\ItF,\prob, \F^{p}, (\xi^{p}, (B,W^{p}) ,\chi^{p,\w}))$ is an admissible setup, for $p=I,S$. Let  
$(\tilde{X}^p, \tilde{Y}^p,\tilde{Z}^{p,0}, \tilde{Z}^p, \tilde{M}^p)$ be the solution to the FBSDE \eqref{eqn: FBSDE} in these setups.  
As discussed in the proof of \cite[Theorem 5.1]{el1997backward}, the adjoint process determined by the stochastic maximum principle is defined as 
\begin{equation}
\label{eqn: YtildeBSDE}
\tilde{Y}^{p}_t = \E\bigg[ \bar{g}^p(\chi^{p,\w}_T) + \int_t^T c^p(s,\chi^{p,\w}_s)ds\biggm| \tilde{\ItF}^{p}_t\bigg].
\end{equation} 
By Assumption \ref{assumption: affine target functions}, the {derivatives of the cost functions} do not depend on the state variable $\tilde{X}^{p}$. Consequently, the process $\tilde{Y}^{p}$ depends only on the exogenous processes $\chi^{p,\w}$ and on the other noises generating the filtration, namely $B$ and $W^{p}$.  
Recall that $\bar{Y}^p = Y^{p}$ denotes the adjoint process for the control problem formulated in the filtration $\F^{p}$ in \eqref{eqn: lifted filtrations}. In particular, $Y^p$ admits a representation as a conditional expectation as in \eqref{eqn: YtildeBSDE}, with $\tilde{\ItF}^p_t$ replaced by $\ItF^p_t$.  
It follows that $\tilde{Y}^{p}_t = \E\big[Y^{p}_t \mid \tilde{\ItF}^{p}_t\big]$, and thus \eqref{eqn: conclusion winf projected} is obtained by applying the tower property of conditional expectations to \eqref{eqn: mc price process mf v2}.
\end{proof}

\section{Construction of mean-field equilibrium}
\label{section: existence of solutions}

The aim of this section is to prove Theorem \ref{thm: existence of solutions},  by constructing a tight sequence of discretized solutions defined on the canonical space. Throughout this section, we suppose that Assumption \ref{assumption: coefficients} is in force.
We develop a strategy to construct the solution of \eqref{eqn: mc price process mf v2} as limit in distribution of a sequence of approximated price processes. We proceed as follows:
\begin{enumerate}
\item In \S \ref{section: discretization procedure}, we introduce a discretization procedure which enables us to reduce the space $\mathcal{D}([0,T],\R)^{\Omega^0}$ to $\mathcal{D}([0,T],\R)^{\mathbb{J}^{n}}$, where $\mathbb{J}^n$ is a finite set which we define below and $n$ represents the discretization step in space and time. We prove the existence of a fixed point for a suitable input-output functional defined on $\mathcal{D}([0,T],\R)^{\mathbb{J}^{n}}$. For each $n\in\N$, the fixed point $\wn$ plays the role of a discretized equilibrium price process. 

\item In \S \ref{section: stability}, we consider the sequence $(\wn)_{n\in\N}$ together with the state variables $(\XNn,\XIn)_{n\in\N}$ associated with the optimal control problems for the typical informed agent and the typical standard agent. We show that $(\wn)_{n\in\N}$ and $(\XNn,\XIn)_{n\in\N}$ form tight sequences. Afterwards, we prove that the discretized equilibria are stable, in the sense that the optimal control problems for the typical informed agent and the typical standard agent are solved by the weak limit of $(\XIn)_{n\in\N}$ and $(\XNn)_{n\in\N}$ respectively, when the price process appearing in the coefficients of the problems is the weak limit of $(\wn)_{n\in\N}$.

\item In \S \ref{section: consistency condition}, we conclude that the weak limit of the sequence $(\wn)_{n\in\N}$ satisfies \eqref{eqn: mc price process mf v2}.
\end{enumerate}

\subsection{Discretization procedure}
\label{section: discretization procedure}

\subsubsection{Discretized setup}
\label{section: discretization setup}
In analogy to \cite[Section 3.3]{carmona2018probabilistic2} we shall work on the product between the canonical spaces:
\begin{align*}
\bOm^0	&:= \mathcal{C}([0,T],\R)\times \mathcal{D}([0,T],\R);\\ 
\bOm^p	&:= \R\times \mathcal{C}([0,T],\R), \quad p = I,S.
\end{align*}
We endow $\bOm^0$ with the probability measure $\bprob^0 := \mathcal{L}(B,C)$. By Assumption \ref{assumption: admissibility}, the first marginal of $\bprob^0$ is the one-dimensional Wiener measure $\mathcal{W}$. The canonical process on $\bOm^0$ is denoted by $(b,c)$. 
On the other hand, we endow $\bOm^p$ with the probability measure $\bprob^p :=\mathcal{L}(\xi^p)\otimes \mathcal{W}$ and we denote by $(\eta^p,w^p)$ the canonical process on $\bOm^p$ for $p = I,S$. We also introduce the following space: 
\begin{equation}
\label{eqn: bOm}
\bsetup := \bl\bOm^0\times \bOm^I\times \bOm^S, \bItF^{0}\otimes \bItF^{I}\otimes \bItF^{S}, \bprob^{0}\otimes\bprob^I\otimes\bprob^S,\F^{b,c,\eta^I,\eta^S,w^I,w^S}\br.
\end{equation}
 In the following, we denote the expected value on $\bsetup$ by $\bE$.

We first present the discretization procedure on $\R$. Let us consider two integers $l,n\geq 1$, where $l$ represents the space discretization parameter and $n$ the time discretization parameter.
 Denoting with $\lfloor x \rfloor$ the floor function applied to $x$, we introduce the following function:
 \begin{align*}
 \Pi_{l,1}: \R	&\to \R\\ 
 		x	&\mapsto	
        \begin{cases}
					2^{-l} \lfloor x 2^l \rfloor, \quad &\text{if } |x| \leq 2^l;\\
					2^l \text{ sign }(x),	  \quad &\text{if } |x| > 2^l.
				\end{cases}
\end{align*}
%
Moreover, we consider $\Pi_{l,j}:\R^j\to \R^j$, defined for any $j \geq 2$ by: 
\begin{equation*}
(y^1 ,\dots, y^j )	:=  \Pi_{l,j}(x^1,\dots,x^j), \quad \Pi_{l,j+1}(x^1,\dots,x^{j+1})	:=  (y^1,\dots,y^j, \Pi_{l,1}(y^j + x^{j+1}-x^j)).\\
\end{equation*}
The following result is analogous to \cite[Lemma 3.17]{carmona2018probabilistic2}:
\begin{lemma} 
With the notation introduced above, given $l\in\mathbb{N}$, for every $(x^1,\dots,x^j)\in \R^j$ such that $|x^i|\leq 2^l-1$ for all $ i \in \{1,\dots,j\}$, let $(y^1,\dots,y^j) := \Pi_{l,j}(x^1,\dots,x^j)$. Then we have $|x^i - y^i | \leq \frac{i}{2^l}$ for each $ i\in\{1,\dots,j\}$ and for all $j\leq 2^l$.
\end{lemma}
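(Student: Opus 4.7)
The plan is to argue by induction on $j$, exploiting the recursive structure of $\Pi_{l,j}$. First observe the one-dimensional error bound: for any $z \in \R$ with $|z|\leq 2^l$, the definition $\Pi^1_l(z) = 2^{-l}\lfloor z 2^l\rfloor$ together with $\lfloor \cdot \rfloor$ satisfying $0 \leq u - \lfloor u \rfloor < 1$ yields
\[
0 \leq z - \Pi^1_l(z) < \tfrac{1}{2^l}.
\]
The base case $j=1$ follows immediately, since $|x^1|\leq 2^l-1 < 2^l$ ensures $|x^1 - \Pi^1_l(x^1)| \leq 2^{-l}$.

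For the inductive step, assume that for some $j$ with $j+1\leq 2^l$ the claim holds for indices $i=1,\dots,j$, i.e.\ $|x^i - y^i| \leq i/2^l$ for $i\leq j$. By the recursive definition, the first $j$ components of $\Pi_{l,j+1}(x^1,\dots,x^{j+1})$ coincide with $\Pi_{l,j}(x^1,\dots,x^j)$, so only the new coordinate needs to be estimated. Setting $z := y^j + x^{j+1} - x^j$, we have $y^{j+1} = \Pi^1_l(z)$. The triangle inequality combined with $|x^{j+1}|\leq 2^l-1$ and the inductive bound $|y^j-x^j|\leq j/2^l$ gives
\[
|z| \leq |x^{j+1}| + |y^j - x^j| \leq 2^l - 1 + \frac{j}{2^l} \leq 2^l,
\]
where the last inequality uses $j+1 \leq 2^l$, hence $j/2^l \leq 1$. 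So the one-dimensional error bound applies to $z$.

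Finally, writing $x^{j+1} = z - (y^j - x^j)$ and using the triangle inequality,
\[
|x^{j+1} - y^{j+1}| \leq |z - \Pi^1_l(z)| + |y^j - x^j| \leq \frac{1}{2^l} + \frac{j}{2^l} = \frac{j+1}{2^l},
\]
which closes the induction. The only delicate point is verifying $|z|\leq 2^l$ at each step so that $\Pi^1_l$ acts as an undistorted rounding rather than a saturation; this is exactly what the hypothesis $j \leq 2^l$ provides, together with the strict inequality $|x^i|\leq 2^l-1$ leaving room for the accumulated rounding error.
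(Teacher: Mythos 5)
Your proof is correct, and it is essentially the standard argument: the paper itself gives no proof of this lemma, simply citing the analogous Lemma 3.17 of Carmona--Delarue, whose proof is exactly this induction on $j$ (exact one-step rounding error $\leq 2^{-l}$, plus the check that the accumulated error keeps $y^j + x^{j+1}-x^j$ within $[-2^l,2^l]$ so that truncation never kicks in, which is where $j\leq 2^l$ and $|x^i|\leq 2^l-1$ enter). Nothing is missing.
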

Given an integer $n$, let $N= 2^n$ and consider the dyadic time mesh $t_i = \frac{iT}{N}$, $i\in \{0,\dots,N\}$. We introduce the random variable $\bar{V}:= (V_1,\dots,V_{{2^n - 1}}) = \Pi_{l,{2^n - 1}}( b_{t_1}, \dots, b_{t_{{2^n - 1}}})$ and adopt the notation
\begin{equation}
\label{eqn: discr bm}
\bar{V}_j := (V_1,\dots,V_j), \quad j = 1,\dots, 2^n-1, 
\end{equation}
where $(b_{t})_{t\in[0,T]}$ is the first component of the canonical process on $\bar{\Omega}^0$.
$\bar{V}$ is a discrete random variable defined on $(\bar{\Omega}^0, \bar{\ItF}^0,\bar{\prob}^0)$. By \cite[Lemma 3.18]{carmona2018probabilistic2}, for every $i = 1,\dots, {2^n - 1}$, the random vector $(V_1,\dots,V_i)$ has support $(\mathbb{J}_l)^i$, where
\begin{equation}
\label{eqn: J}
\mathbb{J}_{l}:= \Bigg{\{} - \Lambda, -\Lambda + \frac{1}{\Lambda} , -\Lambda + \frac{2}{\Lambda}, \dots, \Lambda - \frac{1}{\Lambda}, \Lambda \Bigg{\}},\quad \Lambda:= 2^l.
\end{equation}
We aim at constructing a stochastic process $\w$ on $\Bar{\Omega}^0$ that is adapted to the discretization $\bar{V}$ of the Brownian motion $b$ and that satisfies a discrete version of the equilibrium condition  \eqref{eqn: mc price process mf v2}. For this purpose, we introduce an input-output map, whose fixed point is $\w$. 

We introduce a discretized input process as an object $\boldsymbol{\bar{\theta}}:= (\theta^0,\dots,\theta^{{2^n - 1}})$ such that  $\theta^i:\mathbb{J}^i_{l}\to \mathcal{C}([t_i,t_{i+1}],\R)$, for each $i = 0,\dots,{2^n - 1}$, where $\mathbb{J}^0_l= \{\emptyset\}$. Equivalently, $\boldsymbol{\bar{\theta}}\in\prod_{i=0}^{{2^n - 1}} \mathcal{C}([t_i,t_{i+1}],\R)^{\mathbb{J}^i_{l}}$. Notice that $\boldsymbol{\bar{\theta}}$ determines uniquely an element $({\vartheta}_t)_{t\in[0,T]}\in \mathcal{D}([t_0,t_{N}],\R)^{\mathbb{J}_l^{2^n - 1}}$, defined as follows: 
\begin{equation}
\label{eqn: input map}
\begin{cases}
{\vartheta}_t(v_1,\dots,v_{{2^n - 1}}):= \theta^i_t(v_1,\dots,v_{i}),\quad t\in[t_i,t_{i+1}),\quad i\in\{0,\dots,{2^n - 1}\},\\ 
{\vartheta}_T(v_1,\dots,v_{{2^n - 1}}):= \theta^{{2^n - 1}}_T(v_1,\dots,v_{{2^n - 1}}).
\end{cases}
\end{equation}
We now define the \cadlag stochastic process $\w$ on $(\bar{\Omega}^0,\bar{\ItF}^0,\bar{\prob}^0)$ as follows:
\begin{equation} 
\label{eqn: discr price} 
\w^{\theta}_t:= {\vartheta}_t(V_1,\dots,V_{{2^n - 1}}),\quad t\in[0,T].
\end{equation}

\subsubsection{Existence of a fixed point in the discretized setup}
\label{section: existence fixed point discretized} 
%
First of all, we make the following observation.
\begin{remark}
\label{rmk: compatibility-approx-ocp}
The process $\vt$ is adapted to the filtration generated by the Brownian motion $b$, and thus $(\eta^p,b, w^p,\chi^{p,\vt})$ is adapted to $\bar{\mathbb{F}}^p$, for $p=S,I$, where  
$\bar{\mathbb{F}}^S= \F^{\eta^S, b, w^S}$ and
$\bar{\mathbb{F}}^I= \F^{\eta^I, b, w^I, c}$. 
We recall that adaptedness implies that $(\eta^p,b,w^p,\chi^{p,\varpi^\theta})$ is compatible with $\mathbb F^p$.
%
%
\end{remark} 
We can now introduce the optimal control problem for the typical agent of the two populations $p = I,S$ on $\bsetup$, in the class of controls $\mathbb{H}^2(\bF^p)$. 
Let $(\bOm, \bar{\ItF}, \bprob, \bF^S, \tilde{X}^S, \tilde{Y}^S, \tilde{Z}^{S,0}, \tilde{Z}^S, 0)$  and $(\bOm, \bar{\ItF}, \bprob, \bF^I, \tilde{X}^I, \tilde{Y}^I, \tilde{Z}^{I,0}, \tilde{Z}^I, \tilde{M}^I)$ be the solution to the FBSDE \eqref{eqn: FBSDE}, provided by Proposition \ref{lemma: iter random env}, in the probabilistic setup  
\begin{equation*}
(\bOm, \bItF, \bprob, \bF^p, \eta^p, (b, w^p), \chi^{p,\vt}),\quad p = I,S.
\end{equation*}
Note that, since the random environment $\vt$ is adapted to $b$, the \cadlag orthogonal martingale term $\tilde{M}^S$ is not present in the FBSDE for the typical standard player. The optimal control has the form \eqref{eqn: optimal controls}. 

We introduce the discretized output process $\boldsymbol{\Phi}(\boldsymbol{\bar{\theta}}) := (\vphi^0(\boldsymbol{\bar{\theta}}), \dots, \vphi^{{2^n - 1}}(\btheta))$, defined as follows:
\begin{equation}
\label{output discretized}
\varphi^i_t(\boldsymbol{\bar{\theta}})=\bl-(n_I\bar{\Lambda}^I + n_S\bar{\Lambda}^S)^{-1}\bE\blq n_I\bar{\Lambda}^I\tilde{Y}^I_t + n_S\bar{\Lambda}^S \tilde{Y}^S_t \bigm| V_1 = v_1,\dots, V_i = v_i]\brq\br_{(v_1,\dots,v_i)\in\mathbb{J}^i_{l}}, \; t\in[t_i,t_{i+1}].
\end{equation}
In particular, $\vphi^i(\boldsymbol{\bar{\theta}})[v_1,\dots,v_i]\in \mathcal{C}([t_i,t_{i+1}],\R)$, for each $i = 0,\dots, {2^n - 1}$. Analogously to the definition of $\btheta$ and $(\theta_t)_{t\in[0,T]}$, we can introduce $\bPhi(\btheta)$ as an element of $\mathcal{D}([0,T],\R)^{\mathbb{J}_l^{2^n - 1}}$:
\begin{alignat*}{3}
\bPhi(\btheta) :	&\qquad\mathbb{J}^{2^n - 1}_l 	&&\to \mathcal{D}([0,T],\R)\\
			&(v_1,\dots,v_{{2^n - 1}})	&&\mapsto (\Phi_t(v_1,\dots,v_{{2^n - 1}}))_{t\in[0,T]}
\end{alignat*}	
where 
\begin{displaymath}
\Phi_t(v_1,\dots,v_{{2^n - 1}}):= 	\begin{cases}
						\vphi^i_t(\btheta),		&  t \in [t_i,t_{i+1}),\  i = 0,\dots,{2^n - 1},\\ 
						\vphi^{{2^n - 1}}_t(\btheta),	& t =T.\\
						\end{cases}	
\end{displaymath}
Note that $\bPhi$ is well-defined because $(V_1,\dots,V_i)$ takes values in $\mathbb{J}^i_{l}$. We remark that $\bPhi$ is given by 
\begin{equation}
\label{eqn: prod input-output}
\begin{aligned}
\bPhi : 	\prod_{i = 0}^{{2^n - 1}} \mathcal{C}([t_i,t_{i+1}],\R)^{\mathbb{J}^i_{l}}		&\to \prod_{i = 0}^{{2^n - 1}} \mathcal{C}([t_i,t_{i+1}],\R)^{\mathbb{J}^i_{l}}\\ 
		\btheta 												&\mapsto \bPhi(\btheta) := ( \vphi^0(\btheta),\dots,\vphi^{{2^n - 1}}(\btheta)).
\end{aligned}
\end{equation}
In the following, we shall make use of a more compact notation: we denote $(\bar{\theta}_t)_{t\in[0,T]}$ by $\bar{\theta}$ and the vector $(v_1,\dots,v_i)\in\mathbb{J}^i_{l}$, by $\bv_i$, for every $i = 1,\dots,N$. We prove the existence of a fixed point of the function $\bPhi$ introduced in \eqref{eqn: prod input-output}, which is hence a solution of the discretized problem. In order to apply standard fixed point results, we introduce the following metric on $\prodspace$:
\begin{small}
\begin{equation}
\label{eqn: metric prod input-output}
d(\btheta^1,\btheta^2):= \max_{i = 0,\dots, {2^n - 1}} \Big{\{}\max_{v_{i} \in \mathbb{J}^{i}_l} \sup_{t\in [t_i,t_i+1]}\{ |\theta^{1,i}_t(v_{i}) - \theta^{2,i}_t(v_{i}) | \}\Big{\}},\quad  \btheta^1,\btheta^2 \in \prodspace.
\end{equation}
\end{small}
The next proposition represents a key step and will be proved in \S \ref{Section: discretized equilibria}.  
\begin{proposition}
\label{prop:existence_discrete}
    There exists a fixed point for the functional  $\boldsymbol{\Phi}$ given by \eqref{eqn: prod input-output}.
\end{proposition}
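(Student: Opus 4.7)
I propose to prove Proposition \ref{prop:existence_discrete} by applying Schauder's fixed point theorem to $\boldsymbol{\Phi}$ on a convex, compact subset of $\prodspace$. Because the support $\mathbb{J}^i_l$ of $\bv_i$ is finite and each factor is $\mathcal{C}([t_i,t_{i+1}];\R)$, Arzel\`a--Ascoli delivers compactness of uniformly bounded, equi-Lipschitz families on each factor, and a finite product of such compacts is compact in the metric $d$ of \eqref{eqn: metric prod input-output}.

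First I would establish a priori estimates uniform in $\btheta$. By Proposition \ref{lemma: iter random env}, the adjoint process admits the BSDE representation
\[
\tilde{Y}^p_t = \bE\Big[\partial_x g^p(\tilde X^p_T, \chi^{p,\vtheta}_T) + \int_t^T \partial_x \bar f^p(s, \tilde X^p_s, \chi^{p,\vtheta}_s)\,ds \,\Big|\, \bar{\mathcal{F}}^p_t\Big],
\]
so Assumption \ref{assumption: coefficients}(5) gives $|\tilde Y^p| \leq L(1+T)$ pathwise, whence $|\varphi^i_t(\btheta)| \leq K_0 := L(1+T)$ uniformly in $\btheta$. For $s,t \in [t_i,t_{i+1}]$, the martingale increments of $\tilde Y^p$ over $[s,t]$ are centered given $\bar{\mathcal{F}}^p_{t_i}$, and since $\sigma(\bar V_i) \subseteq \bar{\mathcal{F}}^p_{t_i}$, the tower property applied to $\bE[\tilde Y^p_t - \tilde Y^p_s \mid \bar V_i]$ leaves only the drift contribution, which is bounded by $L|t-s|$. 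Consequently $|\varphi^i_t(\btheta) - \varphi^i_s(\btheta)| \leq L|t-s|$, and the set
\[
\mathcal{K} := \Big\{ \btheta \in \prodspace : \sup_{t,i,\bv_i}|\theta^i_t(\bv_i)| \leq K_0 \text{ and } |\theta^i_t(\bv_i)-\theta^i_s(\bv_i)|\leq L|t-s| \text{ for all } s,t,i,\bv_i \Big\}
\]
is convex, compact in $d$ by Arzel\`a--Ascoli on each of the finitely many factors, and invariant under $\boldsymbol{\Phi}$.

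It remains to verify that $\boldsymbol{\Phi}: \mathcal{K} \to \mathcal{K}$ is continuous. Given $\btheta^k \to \btheta$ in $d$, the corresponding price processes satisfy $\vt^{\theta^k}_t \to \vt^{\theta}_t$ uniformly in $t$ on each $[t_i,t_{i+1})$, pathwise. Since $|\vt^{\theta^k}| \leq K_0$ on $\mathcal{K}$ and the coefficients of the FBSDE are continuous with at most quadratic growth in $\varpi$ (Assumption \ref{assumption: coefficients}), dominated convergence yields $L^2$--convergence of the FBSDE coefficients; then standard FBSDE stability, resting on the Lipschitz property of the decoupling field from Proposition \ref{lemma: iter random env}(iii), gives $\tilde Y^{p,k}_t \to \tilde Y^p_t$ in $L^2$ for every $t$. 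Conditioning on the finite $\sigma$-algebra $\sigma(\bar V_i)$ preserves $L^2$--convergence, so $\varphi^i_t(\btheta^k) \to \varphi^i_t(\btheta)$ pointwise in $(t,\bv_i)$; the uniform Lipschitz bound from the previous paragraph then upgrades this to uniform convergence on each interval $[t_i,t_{i+1}]$, i.e. convergence in $d$. Schauder's theorem applied to the continuous self-map $\boldsymbol{\Phi}:\mathcal{K}\to\mathcal{K}$ produces the desired fixed point.

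The main obstacle is precisely the FBSDE stability used in the continuity step: because of the forward--backward coupling, continuity of $\tilde Y^p$ in the environment $\varpi$ cannot be obtained by a direct BSDE estimate on $\tilde Y^p$ alone. The Lipschitz continuity of the decoupling field established in Proposition \ref{lemma: iter random env}(iii) is the key ingredient that allows one to close a Gronwall argument on the forward equation and then retrieve convergence of the backward component.
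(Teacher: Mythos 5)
Your proposal is correct and follows essentially the same route as the paper: Schauder's theorem, with uniform boundedness and the Lipschitz-in-time estimate for $\vphi^i$ obtained from the BSDE representation and the boundedness of $\partial_x\bar f^p$, $\partial_x g^p$ (giving compactness via Ascoli--Arzel\`a on the finitely many factors), and continuity of $\bPhi$ derived from FBSDE stability resting on the Lipschitz decoupling field of Proposition \ref{lemma: iter random env}(iii), with conditioning handled on the finitely many positive-probability atoms of $\sigma(\bar V_i)$. The only cosmetic difference is that you apply Schauder on an explicit convex compact invariant set $\mathcal{K}$, while the paper restricts $\bPhi$ to the (convex hull of the) compact closure of its image.
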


\subsubsection{Solution to the discretized game - proof of Proposition \ref{prop:existence_discrete} }
\label{Section: discretized equilibria}
We apply Schauder's theorem to find a fixed point for $\boldsymbol{\Phi}$.
 In order to apply Schauder's Theorem, we have to prove that:
\begin{enumerate}
\item $\bPhi$ is continuous;
\item there exists a compact and convex subset $K\subseteq \prodspace$, for the topology induced by $d$, such that 
$\bPhi(K)\subseteq K$.
\end{enumerate}

\noindent As a first step, we prove the continuity in the whole space $\prodspace$. 

\begin{proposition}
\label{proposition: continuity of bPhi}
Let  $(\btheta^k)_{k\in\N}$ and $\btheta \in\prodspace$ such that $\lim_{k\to\infty}d(\btheta^k,\btheta)= 0$, then, it holds that
\begin{equation}
\lim_{k\to\infty} d(\bPhi(\btheta^k),\bPhi(\btheta)) = 0, 
\end{equation}
where the metric $d$ is defined in \eqref{eqn: metric prod input-output}. 
\begin{proof}
In order to prove the continuity of $\bPhi$ defined by \eqref{eqn: prod input-output} it is sufficient to show that: 
\begin{equation}
\label{eqn: continuity condition}
\lim_{k\to\infty}\sup_{t\in [t_i,t_i+1]} |\vphi^i_t(\btheta^k)[\bv_{i}] - \vphi^i_t(\btheta)[\bv_{i}]|= 0,\quad \forall \bv_{i} \in \mathbb{J}^i_{l}, \quad \forall i  =0,\dots,{2^n - 1}.
\end{equation} 
We consider the input processes $\w^k$ and $\w$ defined as in \eqref{eqn: discr price} by $\btheta^k$ and $\btheta$ respectively. For $p = I,S$, we denote by $Y^{p,k}$ and $Y^p$ the solutions to the backward components in system \eqref{eqn: FBSDE} with $\w^k$ and $\w$ playing the role of the price process, respectively. The left hand side of \eqref{eqn: continuity condition} is equivalent to
\begin{align}
\label{eqn: continuity phii}
&\sup_{t\in [t_i,t_i+1]} |\vphi^i_t(\btheta^k)[\bv_{i}] - \vphi^i_t(\btheta)[\bv_{i}]|\\	
&\nonumber\qquad= \sup_{t\in [t_i,t_i+1]} \Big|-\constantni\bE\bigl[n_I\bar{\Lambda}^IY^{I,k}_t + n_S\bar{\Lambda}^SY^{S,k}_t \mid V_1 = v_1,\dots, V_i = v_i\bigr]\\ 
&\nonumber\qquad\qquad  +\constantni\bE\bigl[n_I\bar{\Lambda}^IY^I_t + n_S\bar{\Lambda}^SY^S_t \mid V_1 = v_1,\dots,V_i =  v_i\bigr]\Big| \\ 
&\nonumber\qquad\leq \constantni\sup_{t\in[t_i,t_{i+1}]}\Big{\{}n_I\bar{\Lambda}^I\big|\bE\bigl[Y^{I,k}_t - Y^I_t\mid V_1 = v_1,\dots, V_i = v_i\bigr]\big| \\
&\nonumber\qquad\qquad +n_S\bar{\Lambda}^S\big|\bE\bigl[Y^{S,k}_t - Y^S_t\mid V_1 = v_1,\dots, V_i = v_i\bigr]\big| \Big{\}}.
\end{align}
Let us consider the term associated with the adjoint processes of the informed agent {(i.e. $p=I$)}:
\begin{equation*}
(A):= \sup_{t\in[t_i,t_{i+1}]}\big|\bE\bigl[Y^{I,k}_t - Y^I_t\mid V_1 = v_1,\dots, V_i = v_i\bigr]\big|.
\end{equation*}
{The analysis of the term associated with the adjoint processes of the standard agent (i.e. $p=S$) is analogous and, therefore, omitted.}
As discussed in the proof of Proposition \ref{lemma: iter random env}, for $t\in[0,T]$:
\begin{align}
\label{eqn: discr Y}
Y^{I,k}_t&= \bE\bblq \partial_xg^I(X^{I,k}_T, \w^k_T, c_T) + \int_t^T \partial_x \bar{f}^I(s,X^{I,k}_s, \w^k_s,c_s)ds \biggm| \bar{\ItF}^I_t\bbrq\\
&\nonumber=\partial_xg^I(X^{I,k}_T, \w^k_T, c_T) + \int_t^T \partial_x \bar{f}^I(s,X^{I,k}_s, \w^k_s,c_s)ds-  \int_t^T Z^{I,0,k}_sdb_s - \int_t^T Z^{I,k}_sdw^I_s \\
&\nonumber\qquad-( M^{I,k}_T - M^{I,k}_t).
\end{align}
We adopt the notation
\begin{align*}
B_k=   \bE\bblq & \big| \partial_xg^I(X^{I}_T, \w^k_T, c_T) -  \partial_xg^I(X^{I}_T, \w_T, c_T)\big| + \int_t^T \big|\partial_x \bar{f}^I(s,X^{I}_s, \w^k_s,c_s)\\
&-  \partial_x \bar{f}^I(s,X^{I}_s, \w_s,c_s)\big|ds\biggm| V_1 = v_1,\dots, V_i = v_i\bbrq. 
\end{align*}
By Lipschitz continuity of $\partial_xg^I$ and $\partial_xf^I$ in the $x$-variable and the tower property, we have that
 \begin{align*}
 (A)= 	&\sup_{t\in[t_i,t_{i+1}]}\bII \bE \bblq  \partial_xg^I(X^{I,k}_T, \w^k_T, c_T) + \int_t^T \partial_x \bar{f}^I(s,X^{I,k}_s, \w^k_s,c_s)ds-  \partial_xg^I(X^{I}_T, \w_T, c_T) \\
 		&\qquad\qquad -  \int_t^T \partial_x \bar{f}^I(s,X^{I}_s, \w_s,c_s)ds\biggm| V_1 = v_1,\dots, V_i = v_i\bbrq\bII\\
	\leq 	&\sup_{t\in[t_i,t_{i+1}]} \bE \bblq L | X^{I,k}_T - X^I_T| + | \partial_xg^I(X^{I}_T, \w^k_T, c_T) -  \partial_xg^I(X^{I}_T, \w_T, c_T)|+  \int_t^T\blq  L |X^{I,k}_s - X^I_s| \\
		&\qquad \qquad + |\partial_x \bar{f}^I(s,X^{I}_s, \w^k_s,c_s)-  \partial_x \bar{f}^I(s,X^{I}_s, \w_s,c_s)|\brq ds\biggm| V_1 = v_1,\dots, V_i = v_i\bbrq\\
	\leq 	&\bE\bblq L(1+T) \sup_{t\in[t_i,T]} |X^{I,k}_t - X^I_t | \biggm| V_1 = v_1,\dots, V_i = v_i\bbrq+ B_k.
 \end{align*}
 {Since $\varpi^k\to\varpi$ uniformly and $\partial_xg^I$,$\partial_xf^I$ are continuous in the price variable and uniformly bounded, dominated convergence yields $B_k\to0$.}
 As a consequence, it suffices to prove that
\begin{equation}
\label{eqn: stability state variable}
\lim_{k \to \infty} \bE\bblq\sup_{t\in[t_i,T]}|X^{I,k}_t - X^I_t|\biggm|\dcond\bbrq = 0,\quad\forall \bv_{i}\in \mathbb{J}^i_{l},\quad i = 0,\dots,{2^n - 1}.
\end{equation}
We apply \cite[Theorem 1.53]{carmona2018probabilistic2}, whose assumptions are satisfied by Assumption \ref{assumption: coefficients}, except for \eqref{eqn: iterrandom}, which is ensured by Proposition \ref{lemma: iter random env}. Hence, there exists a constant $\Gamma_i$ dependent on $L$, $t_i$ and $T$ such that: 
\begin{small}
\begin{equation}
\label{eqn: stability condition}
\begin{split}
&\bE\bblq \sup_{t\in[t_i,T]} |X^{I,k}_t - X^I_t|^2\biggm| \bItF^I_{t_i}\bbrq \\	
&\quad \leq \Gamma_i \bE\bblq  |X^{I,k}_{t_i} - X^{I}_{t_i}|^2+L^2 |\w^k_T - \w_T|^2+ \int_{t_i}^T \blq|\partial_x \bar{f}^I(t,X^{I}_t,\w_t,c_t) - \partial_x\bar{f}^I(t, X^{I}_t, \w^k_t, c_t)|^2+|\bar{\Lambda}^I|^2 |\w_t - \w^k_t|^2 \\ 
&\qquad\qquad  +  |l^I(t,\w_t) - l^I(t,\w^k_t) |^2 +|\sigma^{I,0}(t,\w^k_t) - \sigma^{I,0}(t,\w_t) |^2  + |\sigma^I(t,\w_t) - \sigma^I(t,\w^k_t) |^2  \brq dt \biggm| \bItF^I_{t_i}\bbrq.
\end{split}
\end{equation}
\end{small}
Assumption \ref{assumption: coefficients} implies that every term apart from $|X^{I,k}_{t_i} - X^I_{t_i}|^2$ in the right-hand side of \eqref{eqn: stability condition} converges to zero as $k\to\infty$. By the tower property: 
\begin{displaymath}
\bE\bblq\sup_{t\in[t_i,T]}|X^{I,k}_{t_i} - X^I_{t_i}|^2\biggm|\dcond\bbrq \leq \Gamma_i \bE\blq  |X^{I,k}_{t_i} - X^{I}_{t_i}|^2\Bigm|\dcond\brq + C_k,		
\end{displaymath}
where $\lim_{k\to\infty} C_k = 0$. Let us notice that in $\bE[  |X^{I,k}_{t_i} - X^I_{t_i}|^2|\dcond]$, the processes $\w$ and $\w^k$ appear in the dynamics of $X^I$ and $X^{I,k}$, respectively, until time $t_i$. Thus, they are fixed by the conditioning event $\{\dcond\}$. By \eqref{eqn: stability condition} for $t_i = t_0 = 0$ we have that
\begin{displaymath}
\lim_{k\to\infty} \bE\bblq \sup_{t\in[0,T]} \big|X^{I,k}_t - X^{I}_t\big|^2\bbrq  = 0.
\end{displaymath}
Moreover, since $\bar{V}_{i}= (V_1,\dots,V_i)$ is a discrete random variable whose support is given by the finite set $\mathbb{J}^i_{l}$, by definition of conditional expectation, the following holds:
\begin{displaymath} 
\bE[  |X^{I,k}_{t_i} - X^I_{t_i}|^2] 
= \sum_{\bv_{i}\in \mathbb{J}^i_{l}} \bE\Bigl[  \big|X^{I,k}_{t_i} - X^I_{t_i}\big|^2\Bigm| \dcond\Bigr]\bprob(\dcond).
\end{displaymath}
By construction, $\bprob(\dcond)>0$ for each $\bv_{i}\in \mathbb{J}^i_{l}$. As a consequence,
\begin{small}
\begin{align*}
\lim_{k\to\infty} \bE\Bigl[  \big|X^{I,k}_{t_i} - X^I_{t_i}\big|^2\Bigm| \dcond\Bigr]	&\leq \lim_{k\to\infty} \frac{1}{ \bprob(\dcond)} \bE\bigl[  |X^{I,k}_{t_i} - X^{I}_{t_i}|^2\bigr]\\ 
&\leq \frac{1}{ \bprob(\dcond)} \lim_{k\to\infty} \bE\bblq\sup_{t\in[0,T]} |X^{I,k}_{t} -X^I_{t}|^2\bbrq =0.
\end{align*}
\end{small}
The claim then follows by Jensen's inequality applied to \eqref{eqn: stability state variable}.
\end{proof}
\end{proposition}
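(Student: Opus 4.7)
The plan is to reduce the continuity of $\bPhi$ on the product space to the continuity, for each fixed $i \in \{0,\dots,2^n-1\}$ and each fixed $\bv_i \in \mathbb{J}^i_l$, of the map $\btheta \mapsto \vphi^i_\cdot(\btheta)[\bv_i]$ in the sup-norm on $[t_i,t_{i+1}]$. Writing out the definition of $\vphi^i$ from \eqref{output discretized}, this amounts to showing that
\[
\sup_{t\in[t_i,t_{i+1}]}\bigl|\bE[Y^{p,k}_t - Y^p_t \mid V_1=v_1,\dots,V_i=v_i]\bigr| \to 0, \qquad p=I,S,
\]
where $Y^{p,k}$ (resp.\ $Y^p$) is the adjoint process in the FBSDE \eqref{eqn: FBSDE} driven by $\w^k = \w^{\btheta^k}$ (resp.\ $\w = \w^{\btheta}$), constructed on $\bsetup$ as in the statement right before the proposition.

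First I would exploit the fact that the driver of the backward equation in \eqref{eqn: FBSDE} does not depend on $Y^p$ or $Z^p$, so that (as already used elsewhere in the paper, e.g.\ in \eqref{eqn: YtildeBSDE} and in the proof of Proposition \ref{lemma: iter random env}) we have the representation
\[
Y^{p,k}_t = \bE\!\left[ \partial_x g^p(X^{p,k}_T,\chi^{p,\w^k}_T) + \int_t^T \partial_x \bar f^p(s,X^{p,k}_s,\chi^{p,\w^k}_s)\,ds \,\Big|\, \bar{\ItF}^p_t \right],
\]
and analogously for $Y^p$. Taking the difference, applying the tower property to condition on $\{V_1=v_1,\dots,V_i=v_i\}$, using the Lipschitz continuity of $\partial_x g^p$ and $\partial_x \bar f^p$ in the $x$-variable from Assumption \ref{assumption: coefficients}\eqref{B3}, and bounding the terms where only the environment changes by continuity of $\partial_x g^p$ and $\partial_x \bar f^p$ in $\w$ (and boundedness from \eqref{B2}), I can reduce the whole problem to proving
\[
\lim_{k\to\infty}\bE\!\left[\sup_{t\in[0,T]}|X^{p,k}_t - X^p_t|^2\,\Big|\,V_1=v_1,\dots,V_i=v_i\right]=0
\]
for every admissible $\bv_i$.

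Next I would control the state variables. The forward equation in \eqref{eqn: FBSDE} is coupled to the backward one through the optimal feedback $-\bar\Lambda^p(Y^{p,k}_t+\w^k_t)$, so an elementary Gronwall argument on $X^{p,k}-X^p$ will not close. The way around this is to invoke the stability estimate for FBSDEs (\cite[Theorem 1.53]{carmona2018probabilistic2}), whose key hypothesis is the uniform Lipschitz continuity of the decoupling field; this is precisely what Proposition \ref{lemma: iter random env}\,(iii) provides (equation \eqref{eqn: iterrandom}). The stability estimate yields a bound of the form \eqref{eqn: stability condition} in which all $\w$-dependent discrepancies tend to zero in $L^2$ by the pointwise convergence $\w^k\to\w$ together with the growth bounds of Assumption \ref{assumption: coefficients}\eqref{A1}\eqref{B1}\eqref{B2} and dominated convergence. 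Applying this first with $t_i=0$ gives unconditional convergence $\bE[\sup_{[0,T]}|X^{p,k}_t-X^p_t|^2]\to 0$.

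Finally I would transfer from unconditional to conditional expectations. Since $\bar V_i=(V_1,\dots,V_i)$ takes finitely many values and $\bprob(V_1=v_1,\dots,V_i=v_i)>0$ for every $\bv_i\in\mathbb{J}^i_l$ by the construction in \S\ref{section: discretization setup}, we have
\[
\bE\!\left[\sup_{[0,T]}|X^{p,k}_t-X^p_t|^2 \,\Big|\, V_1=v_1,\dots,V_i=v_i\right] \le \frac{\bE\!\left[\sup_{[0,T]}|X^{p,k}_t-X^p_t|^2\right]}{\bprob(V_1=v_1,\dots,V_i=v_i)} \to 0.
\]
Combining this with a Jensen inequality gives the conditional $L^1$ convergence needed to close the estimate coming from the representation of $Y^{p,k}-Y^p$, and hence the desired pointwise convergence on each cylinder $\{\bar V_i = \bv_i\}$. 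Because $\mathbb{J}^i_l$ is finite, this yields the sup-convergence required by the metric $d$ of \eqref{eqn: metric prod input-output}, and the claim follows. The main obstacle is the coupling between the forward and backward components: without the Lipschitz decoupling field from Proposition \ref{lemma: iter random env}, the FBSDE stability in \eqref{eqn: stability condition} cannot be applied, and the induction across the time-grid (conditioning on $\bItF^p_{t_i}$) would fail.
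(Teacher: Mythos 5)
Your proposal is correct and follows essentially the same route as the paper: the same reduction to sup-norm convergence of each $\vphi^i[\bv_i]$, the same representation of $Y^{p,k}-Y^p$ split into a Lipschitz-in-$x$ term and an environment term, the same appeal to the FBSDE stability estimate of \cite[Theorem 1.53]{carmona2018probabilistic2} backed by the Lipschitz decoupling field of Proposition \ref{lemma: iter random env}, and the same transfer from unconditional to conditional expectations via the positive probability of each atom $\{\bar V_i=\bv_i\}$. No substantive differences to report.
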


We must now prove that the image of $\bPhi$, introduced in \eqref{eqn: prod input-output}, is contained in a compact set of $\prodspace$. To this end, we apply Ascoli-Arzelà's theorem 
to the set of functions
\begin{equation} 
C^i_{\bv_{i}}:= \Bigg{\{}\vphi^i_t(\btheta)[\bv_{i}]: [t_i,t_{i+1}]\to \R,\quad \btheta \in \prodspacej\Bigg{\}},\quad \forall \bv_{i}\in \mathbb{J}^i_{l}, 
\end{equation}
defined for every $ i = 0,\dots,{2^n - 1}$, where $\vphi$ is introduced in equation \eqref{output discretized}. Indeed, if $C^i_{\bv_{i}}$ has compact closure for each $i$, also the finite product $\prod_{i = 0}^{2^n-1} \prod_{\bv_{i}\in \mathbb{J}^i_{l}} C^i_{\bv_{i}}$ has compact closure. To carry out this program, we first prove the following lemma. 
\begin{lemma} There exists a constant $C$ dependent on $L$ and $T$ such that: 
\begin{align}
\label{eqn: pointwise boundedness varphi}
\sup\Bigg{\{} |\vphi^i_t(\btheta)[\bv_{i}]|:\ \btheta \in \prodspacej\Bigg{\}} &\leq C,\quad \forall t\in [t_i,t_{i+1}],\quad \forall \bv_{i} \in \mathbb{J}^i_{l},\\
\label{eqn: equi-continuity varphi}
|\vphi^i_t(\btheta)[\bv_{i}] - \vphi^i_s(\btheta)[\bv_{i}]| &\leq C|t-s|,\quad \forall t,s \in [t_i,t_{i+1}].
\end{align}
\begin{proof}
First of all, \eqref{eqn: pointwise boundedness varphi} is guaranteed by Assumption \ref{assumption: coefficients} as proved in Lemma \ref{lemma: boundedness fourth moment sequence of processes}.
Indeed, as $Y^I$ and $Y^S$ are bounded by $C_B$,  for all  $\btheta\in \prodspacej$, it holds that
\begin{align*}
 \bigl|\vphi^i_t(\btheta)[\bv_{i}]\bigr| 	&= \Big|\bE\Bigl[-\constantni(\bar{\Lambda}^In_IY^I_t + \bar{\Lambda}^Sn_S Y^S_t) \Big|\dcond\Bigr]\Big| \\
 &\leq \constantni  (\bar{\Lambda}^In_I  + \bar{\Lambda}^Sn_S)C_B =C_B.
\end{align*}
To prove \eqref{eqn: equi-continuity varphi}, we consider $s\leq t$ in the interval $[t_i,t_{i+1}]$ and notice that, by Assumption \ref{assumption: coefficients},
\begin{align*}
|\vphi^i_t&(\btheta)[\bv_{i}] - \vphi^i_s(\btheta)[\bv_{i}]|		=\bI \constantni\bE\blq\bar{\Lambda}^In_IY^I_t + \bar{\Lambda}^Sn_S Y^S_t \Bigm| \dcond\brq\\
&\qquad  - \constantni\bE\blq\bar{\Lambda}^In_IY^I_s + \bar{\Lambda}^Sn_S Y^S_s \Bigm|\dcond\brq\bI  \\ 
&\leq \constantni\Bigg( \sum_{p = I,S}\bII\bar{\Lambda}^pn_p\bE\Bigg[ \int_s^t \partial_x\bar{f}^p(u,X^p_u,\chi^{p,\w}_u)du \Biggm| \dcond\Bigg]\bII\Bigg)\\ 
&\leq \constantni\bE\bblq \int_s^t\Bigg( \sum_{p = I,S}\bar{\Lambda}^pn_p|\partial_x\bar{f}^p(u,X^p_u,\chi^{\w,p}_u)|\Bigg)du  \Biggm|\dcond\bbrq\\
&\leq 2L(t-s).
\end{align*}
The same holds if $t\leq s$, thus proving the lemma. 
\end{proof}
\end{lemma}

{By \eqref{eqn: pointwise boundedness varphi}, for every fixed $i=0,\ldots,2^n-1$ and every $\bar{v}_i\in \mathbb{J}^i_l$, the set $C^i_{\bar{v}_i}$ is uniformly bounded in $\mathcal{C}([t_i,t_{i+1}],\mathbb R)$. Moreover, \eqref{eqn: equi-continuity varphi} implies that $C^i_{\bar{v}_i}$ is equicontinuous on $[t_i,t_{i+1}]$.}
We can then apply Ascoli-Arzel\`{a}'s theorem, which guarantees that $\vphi^i[\bv_{i}]$ has compact closure for the uniform norm, for each choice of the vector $\bv_{i}\in\mathbb{J}^i_{l}$. Since $\mathbb{J}^i_{l}$ is finite, also the function
\begin{displaymath}
\vphi^i:\prodspacej \to \mathcal{C}([t_i,t_{i+1}],\R)^{\mathbb{J}^i_{l}}
\end{displaymath}
has compact closure of its image. We can conclude that the image of $\bPhi = (\vphi^0,\dots\vphi^{{2^n - 1}})$ has compact closure. Therefore, we can restrict the continuous function $\bPhi$ to the compact closure of its image in order to apply Schauder's fixed point theorem and, therefore, the proof of Proposition \ref{prop:existence_discrete} is complete.

\subsection{Stability of the discretized equilibria in the weak limit}
\label{section: stability}

In \S \ref{section: discretization procedure} we proved the existence of a sequence of discretized equilibria. In this section, we show that the sequence admits a limit in distribution which satisfies \eqref{eqn: mc price process mf v2}. 

We consider the canonical space $(\bOm,\bItF, \bprob)$ introduced in \S \ref{section: discretization setup} endowed with the canonical processes $(b,c,\eta^I,w^I,\eta^S,w^S)$. Through the space-time grid determined by $n,l := 2n\in\N$, we denote by $\wn$ the fixed point of the functional $\bPhi$, defined in \eqref{eqn: prod input-output}, by $(\Xpn, \Ypn, \Zopn, \Zpn,\Mpn)$ the solution to  \eqref{eqn: FBSDE}, for $p = I,S$, defined assuming that $\w^n$ plays the role of the price process. In particular, the process $\wn$ is a \cadlag process defined for every $i = 0,\dots, {2^n - 1}$ as follows: 
\begin{equation}
\label{eqn: wn}
\wn_t = -\constantni\bE\bigl[n_I\bar{\Lambda}^I\YIn_t + n_S\bar{\Lambda}^S \YNn_t\big| \bVn_i\bigr],\quad t\in[t_i,t_{i+1}),
\end{equation}
where $\bVn_i = (\Vn_1, \dots \Vn_i)$ is the discretization of the common noise until time $t_i = i T/2^n$.

The strategy at the basis of the proof of Theorem \ref{thm: existence of solutions} can be outlined as follows: 
\begin{enumerate}
\item \label{step: S-I} In \S \ref{section: tightness}, we prove tightness of the sequences $(\XIn,\XNn)_{n\in\N}$ in $\mathcal{C}([0,T],\R^2)$. Moreover, we show that the sequence $(\mathcal{W}^n)_{n\in\N}$, where $\mathcal{W}^n := (\w^n,\YIn,\YNn)$, is tight in $\mathcal{M}([0,T],\R^3)$, where $\mathcal{M}([0,T],\R^3)$ is the Meyer-Zheng space (some properties of $\mathcal{M}([0,T],\R)$ we use are recalled in Appendix \ref{Appendix:MZ}). 
As a consequence, the sequence 
\begin{equation}
\label{eqn: tight sequence}
\Theta^n := (b,c,\eta^I,w^I,\eta^S,w^S,\w^n,\YIn,\YNn,\XIn,\XNn), \quad n\in\N,
\end{equation}
defined on $\bar{\Omega}$, is tight on the space 
\begin{equation}
\label{eqn: Omegainput}
\Omega_{\text{input}} := \mathcal{C}([0,T],\R) \times \mathcal{D}([0,T],\R) \times (\mathcal{C}([0,T],\R)\times \R)^2 \times \mathcal{M}([0,T],\R^3) \times \mathcal{C}([0,T],\R^2).
\end{equation}
As pointed out in Remark \ref{rmk: compatibility-approx-ocp}, for  $p\in{I,S}$, the compatibility condition between the canonical filtration $\bar{\mathbb{F}}^p$ and the process $(\eta^p,b,w^p,\chi^{p,\wn})$ is ensured since $(\eta^p,b,w^p,\chi^{p,\wn})$ is adapted to $\bar{\mathbb{F}}^p$.

\item \label{step: S-II} In \S \ref{section: compatibility condition weak limit}, we prove that the compatibility condition is preserved passing to the weak limit of \eqref{eqn: tight sequence} within the two populations $p = I,S$. To this end, we consider a limit in distribution of $(\Theta^n)_{n\in\N}$ (which we still denote $(\Theta^n)_{n\in\N}$) defined on a suitable complete probability space $(\Omega^{\infty}, \ItF^{\infty}, \prob^{\infty})$ and denoted by 
\begin{equation}
\label{eqn: weak limit thetan}
\Theta^\infty := \processinf.
\end{equation}
On $\setupinf$, we introduce the following filtrations:
\begin{align}
\mathbb{F}^{\infty}		&:= \F^{\Thetainf} 
\label{eqn: limit filtration},\\
\mathbb{F}^{I,\infty}		&:= \F^{\eta^{I,\infty}, \binf, c^{\infty},w^{I,\infty}, \mathcal{W}^{I,\infty}, \XIinf} \label{eqn: limit filtration major},\\ 
\mathbb{F}^{S,\infty}		&:= \F^{\eta^{S,\infty}, \binf, w^{S,\infty}, \mathcal{W}^{S,\infty}, \XNinf}\label{eqn: limit filtration standard},
\end{align}
where $\mathcal{W}^{I,\infty} := (\winf, \YIinf)$ and $\mathcal{W}^{S,\infty} = (\winf, \YNinf)$. Hence, we show that
\begin{itemize}
\item the stochastic process
\begin{equation}
\label{eqn: noise limit process major}
\tilde{\Theta}^{I, \infty}:= ( \eta^{I,\infty}, \binf, c^\infty, w^{I,\infty},\winf,\YIinf),
\end{equation}
taking values on $
\Omega^I_{\text{input}} :=\R \times \mathcal{C}([0,T],\R^2) \times \mathcal{D}([0,T],\R^3)$, is compatible with $\mathbb{F}^{I,\infty}$;
\item the stochastic process
\begin{equation}
\label{eqn: noise limit process standard}
\tilde{\Theta}^{S, \infty}:= ( \eta^{S,\infty}, \binf, w^{S,\infty},\winf,\YNinf), 
\end{equation}
taking values on $\Omega^S_{\text{input}} := \R \times \mathcal{C}([0,T],\R^2) \times \mathcal{D}([0,T],\R^2)$, is compatible with $\mathbb{F}^{S,\infty}$.
\end{itemize}
As we shall explain in \S \ref{section: compatibility condition weak limit} below, adding  the sequence of adjoint processes of the discretized game $(\YIn,\YNn)_{n\in\N}$ to the environment $(\wn)_{n\in\N}$ is necessary to guarantee the compatibility condition in the limit.

\item \label{step: S-III} Once compatibility is verified, for $p=I,S$, by Proposition \ref{lemma: iter random env} we are in the position to introduce the optimal control problems and 
the FBSDE system \eqref{eqn: FBSDE}
\begin{equation*}
(\bX^{p,\infty},\bY^{p,\infty},\bZ^{p,0,\infty},\bZ^{p,\infty},\bM^{p,\infty})
\end{equation*}
in the admissible setup
\[
\bigl(\Omega^\infty,\ItF^\infty,\prob^\infty, \F^{p, \infty}, \eta^{p, \infty}, (b^\infty,w^{p,\infty}) ,(\chi^{p,\varpi^{\infty}}, Y^{p,\infty})\bigr).
\]
Then we show in \S \ref{section: optimality weak equilibria} that
 $\bX^{p,\infty} = X^{p,\infty}$, corresponding to the weak limit of the sequence of solutions of the discretized optimal control problems.

\item \label{step: S-IV} Finally, by the uniqueness of the solutions to the FBSDE systems, we have  that $\bY^{p,\infty} = Y^{p,\infty}$, where $Y^{p,\infty}$ is the weak limit of the sequence $(Y^{p,n})_{n\in\N}$. Applying this property, we show in \S \ref{section: consistency condition} that the following relation holds: 
\begin{equation} 
\label{eqn: consistency condition for winf}
\winf_t = -\constantni\E^{\infty}\blq n_I\bar{\Lambda}^I\YIinf_t  + n_S\bar{\Lambda}^S \YNinf_t \Big| \ItF^{\w^\infty,\binf}_t\brq, \quad \forall t \in[0,T],\, \mathbb{P}^\infty-a.s. 
\end{equation}
\end{enumerate}

\subsubsection{Tightness of ${(X^{p,n},Y^{p,n},\wn)_{n\in\N}}$ in ${\mathcal{C}([0,T],\R) \times \mathcal{M}([0,T],\R^2)}$} 
\label{section: tightness} 

First of all, we prove the tightness of the sequences $(X^{p,n})_{n\in\N}$ in $\mathcal{C}([0,T],\R)$, for $p = I,S$.

\begin{proposition}
\label{prop:tightness:Xn}
For  $p = I,S$, the sequence $(\Xpn)_{n\in\N}$ is tight on $\mathcal{C}([0,T],\R)$.
\end{proposition}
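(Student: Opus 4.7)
The plan is to verify the Kolmogorov--Chentsov tightness criterion for continuous processes, by showing a uniform bound on fourth moments of increments. The key observation is that the drift and diffusion coefficients of the SDE satisfied by $X^{p,n}$ are uniformly bounded in $n$, which reduces the problem to a standard SDE estimate.

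First I would exploit the representation
\[
Y^{p,n}_t = \bE\!\left[\partial_x g^p(X^{p,n}_T,\chi^{p,\wn}_T) + \int_t^T \partial_x \bar f^p(s,X^{p,n}_s,\chi^{p,\wn}_s)\,ds \,\Big|\, \bar\ItF^p_t\right],
\]
already used in the proof of Proposition \ref{proposition: continuity of bPhi}. By Assumption \ref{assumption: coefficients}(\ref{B2}), $\partial_x g^p$ and $\partial_x \bar f^p$ are bounded by $L$, so $|Y^{p,n}_t|\le L(1+T)$ pointwise, uniformly in $n$. Plugging this into the fixed-point identity \eqref{eqn: wn},
\[
\wn_t = -(n_I\bar\Lambda^I+n_S\bar\Lambda^S)^{-1}\bE[n_I\bar\Lambda^I Y^{I,n}_t + n_S\bar\Lambda^S Y^{S,n}_t\,|\,\bVn_i],
\]
one gets a deterministic uniform bound $|\wn_t|\le L(1+T)$. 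These bounds are already recorded in (the statement of) Lemma \ref{lemma: boundedness fourth moment sequence of processes} invoked in the proof of \eqref{eqn: pointwise boundedness varphi}.

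Next, since $|\wn_t|$ is uniformly bounded, Assumption \ref{assumption: coefficients}(\ref{A1}) yields a deterministic constant $K$ with
\[
|l^p(t,\wn_t)| + |\sigma^{p,0}(t,\wn_t)| + |\sigma^p(t,\wn_t)| \le K, \qquad \text{for all } t\in[0,T],\ n\in\N.
\]
Combining this with the uniform bound $|\bar\Lambda^p(Y^{p,n}_t+\wn_t)|\le 2\bar\Lambda^p L(1+T)$, the drift of the $X^{p,n}$-SDE in \eqref{eqn: FBSDE} is uniformly bounded, and the diffusion coefficients are uniformly bounded as well. A standard application of Jensen's and the Burkholder--Davis--Gundy inequalities then gives, for every $0\le s\le t\le T$ and some constant $C=C(L,T,\bar\Lambda^I,\bar\Lambda^S)$ independent of $n$,
\[
\bE\bigl[|X^{p,n}_t - X^{p,n}_s|^4\bigr] \le C\,(t-s)^2.
\]
Moreover $X^{p,n}_0=\eta^p$ has distribution independent of $n$ with finite fourth moment by Assumption \ref{assumption: 4th moment X0 bound}, so the initial laws form a tight family.

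Finally, the Kolmogorov--Chentsov tightness criterion (see, e.g., Karatzas--Shreve, Problem 2.4.11) yields tightness of $(X^{p,n})_{n\in\N}$ on $\mathcal{C}([0,T],\R)$. The same argument applies for both $p=I$ and $p=S$. The only subtlety is that the bounds on $\wn$ and $Y^{p,n}$ must be uniform in $n$, but this is immediate from the conditional-expectation representation and the global $L$-bound on the derivatives of the cost coefficients, so no genuine difficulty arises.
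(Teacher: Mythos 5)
Your proof is correct and follows essentially the same route as the paper: uniform boundedness of $Y^{p,n}$ and $\wn$ (Lemma \ref{lemma: boundedness fourth moment sequence of processes}, which you rederive from the conditional-expectation representation and the bound on the derivatives of the costs), a fourth-moment increment estimate for $X^{p,n}$ via the SDE with uniformly bounded drift and diffusion, and then Kolmogorov's criterion together with the fixed law of the initial condition.
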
 

\begin{proof}
By Lemma \ref{lemma: boundedness fourth moment sequence of processes}, the processes $Y^{p,n}$ and $\varpi^n$ are uniformly bounded by $C_B$. As a consequence, by Assumption \ref{assumption: coefficients} and {the Burkholder-Davis-Gundy inequality, there exists a constant $C_4$ such that}, for all $0\leq t < s\leq T$,
\begin{align*}
 \bE\bigg[ |\Xpn_s - \Xpn_t|^4\bigg] 
 &\leq C_4 \bE \bigg[\Big(\int_t^s \bigl( \, |\bar{\Lambda}^p(Y^{p,n}_r+\wn_r) 
 +|l^p(r,\wn_r)| \bigr)dr  \Big)^4\bigg]   \\
 &\quad
 +  C_4 \bE \bigg[\Big( \int_t^s \bigl(|\sigma^{p,0}(r,\wn_r)|^2 + 
 |\sigma^p(r,\wn_r)|^2 \bigr)dr \Big)^2 \bigg]\\
 &\leq C_4 \bE\bigg[ \Big(\int_t^s \bigl( \, \bar{\Lambda}^p(|Y^{p,n}_r|+|\wn_r|) 
 +L(1+|\wn_r| ) \bigr)dr  \Big)^4\bigg]    \\
&\quad +  C_4 \bE \bigg[\Big( \int_t^s 2 L^2(1+|\wn_r| )^2 \big]dr \Big)^2\bigg] \\
 &\leq  C_4 \bigl(\bar{\Lambda}^p 2C_B + L(1+C_B) \bigr)^4 |s-t|^4 
 + C_4 2L^2 (1+C_B)^2 |s-t|^2.
\end{align*}
Therefore, Kolmogorov's criterion, together with the fact that the distribution of the initial condition is constant, provides the tightness of  $(\Xpn)_{n\in\N}$ on $\mathcal{C}([0,T],\R)$.
\end{proof}

We now prove the tightness of the sequences $(\wn)_{n\in\N}$, $(\Ypn)_{n\in\N}$, for $p = I,S$, in the Meyer-Zheng space. To do so, we apply \cite[Theorem 5.8]{kurtz1991random}, which is recalled in Appendix \ref{Appendix:MZ}. 

\begin{proposition} 
\label{prop: tightness Y and w}
Let  $\mathbb{G}^n$ be the subfiltration of $\bF$ given by $\G^n_t := \sigma\{\bar{V}_j\}$, for all $t \in[t_j,t_{j+1})$, and $j = 0,\dots,2^n-1$, where the vector $\bar{V}_j$ is given by \eqref{eqn: discr bm}.
Then, for every $n\in\N$, $\wn$ satisfies \eqref{eqn: Kurtz condition} for the filtration $\mathbb{G}^n$. Moreover, $\Ypn$ satisfies \eqref{eqn: Kurtz condition} for the filtration $\bF^{p}$, for all $n\in\N$ and $p=I,S$. 
\begin{proof}
Notice that $\wn$ is adapted to $\mathbb{G}^n$ as it satisfies \eqref{eqn: wn}. We prove that
\begin{equation}
\sup_{n\in\N} \left\{\bE\big[|\wn_T|\big] + V^{n}_T(\wn) \right\}< \infty,
\end{equation}
where
\begin{align*}
{V}^{n}_T(\wn)	&=  \sup_{K\geq 1} \sup_{0\leq s_0\leq \dots\leq s_K\leq T} \bE\bblq \sum_{j = 0}^{K-1} \Big|\bE\bigl[ \wn_{s_{j+1}} - \wn_{j}\mid\G^n_{s_j}\bigr]\Big|\bbrq.
\end{align*}
We first observe that $(\G^{n}_t)_{t\in[0,T]}$ is contained in $\bF^S\wedge \bF^I$. This condition, together with Assumption \ref{assumption: coefficients}, implies that
\begin{align*}
 \bI&\bE\blq\wn_{s_{j+1}} - \wn_{s_j}\bigm|\G^n_{s_j}\brq\bI 	
 \\
 &\leq \bI\bE\blq \constantni \Big(n_I\bar{\Lambda}^ I \bE\bigl[ \YIn_{s_{j+1}} \mid \G^n_{s_{j+1}}\bigr] + n_S\bar{\Lambda}^S \bE\bigl[\YNn_{s_{j+1}}\mid \G^n_{s_{j+1}}\bigr]   
 - n_I\bar{\Lambda}^I  \bE\bigl[ \YIn_{s_{j}} \mid\G^n_{s_j}\bigr] \\
 &\qquad -n_S\bar{\Lambda}^S \bE\bigl[\YNn_{s_{j}}\mid \G^n_{s_{j}}\bigr] \Big)\Bigm|\G^n_{s_{j}}\brq\bI\\ 
 &\leq \constantni\bI  n_S\bar{\Lambda}^S\ \bE\bigl[\YNn_{s_{j+1}} - \YNn_{s_{j}}\mid \G^n_{s_{j}}\bigr] + n_I\bar{\Lambda}^I\bE\bigl[\YIn_{s_{j+1}} - \YIn_{s_{j}}\mid \G^n_{s_{j}}\bigr] \bI\\
& = \constantni \bII n_S \bar{\Lambda}^S \ \bE\bblq \int_{s_{j}}^{s_{j+1}}\partial_x \bar{f}^S(s,\XNn_s, \wn_s)ds \Biggm| \G^n_{s_{j}}\bbrq \\
&\qquad+ n_I \bar{\Lambda}^I \ \bE\bblq \int_{s_{j}}^{s_{j+1}}\partial_x \bar{f}^I(s,\XIn_s, \wn_s,c_s)ds \Biggm| \G^n_{s_{j}}\bbrq\bII\\
&\leq \constantni \bE\bblq \int_{s_{j}}^{s_{j+1}}\bl n_S\bar{\Lambda}^S|\partial_x \bar{f}^S(s,\XNn_s, \wn_s)| + n_I\bar{\Lambda}^I|\partial_x \bar{f}^I(s,\XIn_s, \wn_s,c_s)|\br ds\Biggm| \G^n_{s_{j}}\bbrq \\ 
&\leq 2L(s_{j+1} - s_j).
 \end{align*}
This implies that ${V}^{n}_T(\wn)\leq 2LT$. 
Similarly, we prove that the sequences $(\YNn)_{n\in\N}$ and $(\YIn)_{n\in\N}$ satisfy condition \eqref{eqn: Kurtz condition} with respect to filtrations $(\F^{p})_{n\in\N}$, for $p=I,S$. 
By assumption, it holds that $\bE[|\Ypn_T|] = \bE[|\partial_xg^p(\Xpn_T, \chi^{p,\varpi^n}_T)|] \leq L$, for every $n\in\N$. Hence,  
\begin{align*}
{V}^{n}_T(\Ypn)	&= \sup_{K\geq 1} \sup_{s_0\leq \dots \leq s_K\leq T} \bE\bblq \sum_{j= 0}^{K - 1} \Big|\bE[\Ypn_{s_{j+1}} - \Ypn_{s_j}\mid\ItF^{p}_{s_j} ]\Big|\bbrq\\ 
&= \sup_{K\geq 1} \sup_{s_0\leq \dots \leq s_K\leq T}\bE\bblq \sum_{j= 0}^{K - 1}\bII\bE\bblq \int_{s_j}^{s_{j+1}} \partial_x\bar{f}^p (s,\Xpn_s, \chi^{p,\wn}_s) ds \mid\ItF^{p}_{s_j}\bbrq\bII\bbrq\\
 &\leq  \sum_{j= 0}^{K - 1}(s_{j+1} - s_j) L = TL.
\end{align*}
Since the estimate does not depend on $n$, we can take the supremum and obtain the claim.											
\end{proof}
\end{proposition}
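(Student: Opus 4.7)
The plan is to verify the two hypotheses of the Kurtz criterion (Theorem 5.8 of Kurtz-Protter, recalled in Appendix \ref{Appendix:MZ}): namely, $\sup_n \bE[|A^n_T|] < \infty$ and $\sup_n V^n_T(A^n) < \infty$, for $A^n = \wn$ (with respect to $\mathbb{G}^n$) and for $A^n = \Ypn$ (with respect to $\bar{\mathbb{F}}^p$). The $L^1$-bounds on terminal values are immediate: by Lemma \ref{lemma: boundedness fourth moment sequence of processes} the processes $\Ypn$ are uniformly bounded (by some constant $C_B$), and therefore $|\wn_t|$ is uniformly bounded as a conditional expectation of a bounded quantity, while $\bE[|\Ypn_T|] = \bE[|\partial_x g^p(\Xpn_T,\chi^{p,\wn}_T)|] \leq L$ by Assumption \ref{assumption: coefficients}\ref{B2}.

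The heart of the proof is the uniform bound on the conditional variation. I would start with $\Ypn$, since its BSDE structure yields the cleanest estimate. Using the representation
\begin{equation*}
\Ypn_t = \bE\!\left[\partial_x g^p(\Xpn_T,\chi^{p,\wn}_T) + \int_t^T \partial_x \bar f^p(s,\Xpn_s,\chi^{p,\wn}_s)\,ds \,\Big|\, \bar{\ItF}^p_t\right],
\end{equation*}
and the tower property, for any $s_j \leq s_{j+1}$ one obtains
\begin{equation*}
\bE[\Ypn_{s_{j+1}} - \Ypn_{s_j} \mid \bar{\ItF}^p_{s_j}] = -\bE\!\left[\int_{s_j}^{s_{j+1}} \partial_x \bar f^p(s,\Xpn_s,\chi^{p,\wn}_s)\,ds \,\Big|\, \bar{\ItF}^p_{s_j}\right],
\end{equation*}
whose absolute value is bounded by $L(s_{j+1}-s_j)$ by Assumption \ref{assumption: coefficients}\ref{B2}. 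Summing over any partition gives $V^n_T(\Ypn) \leq L T$, uniformly in $n$ and $p$.

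For $\wn$, the key observation is that $\wn$ is adapted to $\mathbb{G}^n$ by \eqref{eqn: wn}, and that $\mathbb{G}^n \subseteq \bar{\mathbb{F}}^I \cap \bar{\mathbb{F}}^S$, so tower-property applications are legitimate. I would rewrite
\begin{equation*}
\bE[\wn_{s_{j+1}} - \wn_{s_j} \mid \G^n_{s_j}] = -\constantni \sum_{p=I,S} n_p \bar\Lambda^p \, \bE\!\left[\Ypn_{s_{j+1}} - \Ypn_{s_j} \mid \G^n_{s_j}\right],
\end{equation*}
using that the outer conditioning on $\G^n_{s_j}$ can absorb the conditional expectations defining $\wn$ at times $s_j,s_{j+1}$ (noting $\G^n_{s_{j+1}}$ refines the conditioning used at time $s_{j+1}$ appropriately). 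Another tower step reduces this to the $\bar{\mathbb{F}}^p$-increment of $\Ypn$ just estimated, yielding an upper bound of order $(s_{j+1}-s_j)$ and thus $V^n_T(\wn) \leq 2LT$ uniformly. The main delicate point I expect is verifying the measurability/conditioning interchange in this last step, since $\wn$ is built from conditional expectations at \emph{different} times relative to different $\sigma$-algebras; I would handle it carefully via the tower property exploiting $\G^n_{s_j} \subseteq \bar{\mathbb{F}}^p_{s_j}$ and the piecewise constancy of the conditioning $\sigma$-algebras $\G^n$ on the dyadic mesh.
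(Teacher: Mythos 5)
Your proposal is correct and follows essentially the same route as the paper: the terminal $L^1$ bounds come from the uniform boundedness of $Y^{p,n}$ (hence of $\varpi^n$) and from $|\partial_x g^p|\leq L$, while the conditional variation bounds are obtained by writing the increments of $Y^{p,n}$ through the BSDE drift and reducing the increments of $\varpi^n$ to those of $Y^{p,n}$ via the tower property, using $\mathbb{G}^n\subseteq \bar{\mathbb{F}}^I\wedge\bar{\mathbb{F}}^S$, which yields the same bounds $LT$ and $2LT$.
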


Combining Propositions \ref{prop:tightness:Xn} and \ref{prop: tightness Y and w} and \cite[Theorem 5.8]{kurtz1991random}, we obtain the following result.

\begin{proposition}
\label{prop:tightness}
The sequence $\Theta^n := (b,c,\eta^I,w^I,\eta^S,w^S,\w^n,\YIn,\YNn,\XIn,\XNn)$
  is tight on the space 
$\Omega_{\text{input}}$ defined in \eqref{eqn: Omegainput}. 
\end{proposition}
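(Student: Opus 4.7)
The plan is to decompose $\Theta^n$ into three blocks and prove tightness of each block separately; joint tightness then follows from the standard fact that in a finite product of Polish spaces, tightness of a sequence of laws is equivalent to coordinate-wise tightness of the marginals.

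First I would observe that the block $(b,c,\eta^I,w^I,\eta^S,w^S)$ consists of the canonical processes on $\bar\Omega$ endowed with the fixed product measure $\bar\prob = \bprob^0 \otimes \bprob^I \otimes \bprob^S$. Since the joint law of this block does not depend on $n$, the corresponding sequence of laws is constant and hence trivially tight on the underlying Polish space $\mathcal{C}([0,T],\R) \times \mathcal{D}([0,T],\R) \times (\mathcal{C}([0,T],\R)\times \R)^2$.

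Second, for the continuous block $(\XIn, \XNn)_{n\in\N}$, I would invoke Proposition \ref{prop:tightness:Xn}, which via Kolmogorov's criterion already gives tightness of each coordinate sequence $(\Xpn)_{n\in\N}$ in $\mathcal{C}([0,T],\R)$. Tightness of the pair in $\mathcal{C}([0,T],\R^2)$ is then immediate by taking product compact sets.

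Third, for the Meyer-Zheng block $(\wn, \YIn, \YNn)_{n\in\N}$, the plan is to apply Kurtz's theorem \cite[Theorem 5.8]{kurtz1991random}. Proposition \ref{prop: tightness Y and w} supplies the required uniform bounds on the conditional variance of each coordinate with respect to the appropriate subfiltration, namely $\mathbb{G}^n$ for $\wn$ and $\bF^p$ for $\Ypn$, while the uniform $L^1$ bounds on the terminal values follow from Lemma \ref{lemma: boundedness fourth moment sequence of processes} together with Assumption \ref{assumption: coefficients}. Applying Kurtz's criterion coordinate by coordinate yields tightness in $\mathcal{M}([0,T],\R)$, and the product structure of the Meyer-Zheng topology promotes this to tightness in $\mathcal{M}([0,T],\R^3)$.

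There is no genuine obstacle in the statement itself: all the analytic work has already been done in Propositions \ref{prop:tightness:Xn} and \ref{prop: tightness Y and w}, and the conclusion is essentially a bookkeeping combination of these two tightness results with the constancy of the noise block. The only point requiring mild care is to ensure that the various marginal tightness statements are compatible, which they are since all target spaces are Polish.
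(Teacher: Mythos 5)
Your proposal is correct and follows essentially the same route as the paper: the paper obtains this proposition directly by combining Proposition \ref{prop:tightness:Xn} (Kolmogorov's criterion for the state processes), Proposition \ref{prop: tightness Y and w} together with Kurtz's criterion \cite[Theorem 5.8]{kurtz1991random} for the Meyer--Zheng block, and the fact that the law of the noise block does not depend on $n$, with the product/marginal bookkeeping left implicit. Your write-up simply makes that bookkeeping explicit, and it is sound since all factor spaces are Polish and the Meyer--Zheng topology on $\mathcal{M}([0,T],\R^3)$ coincides with the product topology.
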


\subsubsection{Compatibility for the limit optimal control problems}
\label{section: compatibility condition weak limit}
We are now allowed to introduce a limit in distribution of a subsequence of \eqref{eqn: tight sequence}, which we still denote $(\Theta^n)_n$; such limit takes the form \eqref{eqn: weak limit thetan}. As discussed in Remark \ref{rmk: problem compatibility condition}, in order to introduce the optimal control problems for the typical standard agent and the typical informed agent, we need to guarantee the compatibility condition between $(\eta^{p,\infty},\binf,w^{p,\infty},\chi^{p,\winf})$ and the filtration generated by the weak limit $(\eta^{p,\infty},\binf,w^{p,\infty},\chi^{p,\winf},X^{p,\infty})$. This property does not hold in general, because of the presence of $X^{p,\infty}$ in the filtration. For this reason, we consider the sequence of processes 
\begin{displaymath}
\mathcal{W}^{p,n}:= (\wn, \Ypn), \quad n\in \N.
\end{displaymath} 
In analogy to \cite[Chapter 3]{carmona2018probabilistic2}, we call $\mathcal{W}^{p,n}$ \emph{lifted environment of population $p$}, for $p=S,I$. As we are going to show below, lifting the environment $\w^n$ enables us to guarantee the compatibility condition in the limit in distribution. 

By Proposition \ref{prop:tightness}, the sequence $(\mathcal{W}^{p,n})_{n\in\N}$ is tight. As a consequence, we can consider a weak limit in $\mathcal{M}([0,T],\R^2)$. The weak limit $\mathcal{W}^{p,\infty} = (\winf, Y^{p,\infty})$ admits a \cadlag version (\cite[Theorem 5.8]{kurtz1991random}). We point out that, for the moment, we cannot conclude that $Y^{p,\infty}$ is the adjoint process of the solution $X^{p,\infty}$ of the optimal control problem of the typical agent defined on $\setupinf$. Indeed, since the compatibility condition is not yet guaranteed, we cannot define the optimal control problem on $\setupinf$. In particular, we first need to prove the following result.\footnote{In the proof of Lemma \ref{lemma: compatibility condition limit ocp}, we point out that Proposition \ref{prop: Xpinf satisfies FSDE} is used only to identify the limiting filtration and prove compatibility, not to prove the convergence of the sequence of controls.}

\begin{lemma}
\label{lemma: compatibility condition limit ocp}
Let us consider the above setup. Then, for $p = I,S$, let
\begin{equation}
\label{eqn: optimal controls limit game}
\hat{\alpha}^{p,\infty}_t := -\overline{\Lambda}^p(\Ypinf_t +\winf_t)
\end{equation} 
and let $\hat{\alpha}^{p,n}$ be the optimal control for the discretized setting, for $n\in\N$.  Then the sequence $(\hat{\alpha}^{p,n})_{n\in\N}$ converges in distribution on $\mathcal{M}([0,T],\R)$ to a \cadlag process that is $\mathbb{P}^\infty$-indistinguishable from  $\hat{\alpha}^{p,\infty}$.
Moreover, the process $\tilde{\Theta}^{p,\infty}$ introduced in \eqref{eqn: noise limit process major} and \eqref{eqn: noise limit process standard} is compatible with the filtration $\F^{p,\infty}$ which coincides with $\F^{\tilde{\Theta}^{p,\infty}, X^{p,\infty}}$. 
\end{lemma}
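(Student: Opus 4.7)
My plan is to address the two claims separately. First, I would argue that the discretized optimal control is $\hat{\alpha}^{p,n}_t = -\bar{\Lambda}^p(\Ypn_t + \wn_t)$ by Proposition \ref{lemma: iter random env}(ii) applied in the setup of \S\ref{section: discretization setup}. This writes $\hat{\alpha}^{p,n}$ as a continuous linear functional of $\mathcal{W}^{p,n}=(\wn,\Ypn)$ in the Meyer--Zheng topology, which coincides with convergence in $dt\otimes d\mathbb{P}$-measure and under which linear maps are continuous. The continuous mapping theorem then yields convergence in distribution of $\hat{\alpha}^{p,n}$ to $-\bar{\Lambda}^p(Y^{p,\infty}+\winf) = \hat{\alpha}^{p,\infty}$ on $\mathcal{M}([0,T],\R)$, and Kurtz's theorem recalled in Appendix \ref{Appendix:MZ} provides the \cadlag version.

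Next I would note that the equality $\mathbb{F}^{p,\infty} = \F^{\tilde{\Theta}^{p,\infty}, X^{p,\infty}}$ follows by merely unfolding the definitions \eqref{eqn: limit filtration major}--\eqref{eqn: limit filtration standard} against \eqref{eqn: noise limit process major}--\eqref{eqn: noise limit process standard}, recalling that $\mathcal{W}^{p,\infty} = (\winf, Y^{p,\infty})$. Thus compatibility of $\tilde{\Theta}^{p,\infty}$ with $\mathbb{F}^{p,\infty}$ reduces, via the characterization \eqref{eqn: immersion}, to proving that enlarging $\F^{\tilde{\Theta}^{p,\infty}}$ by $X^{p,\infty}$ preserves the Brownian property of $(\binf, w^{p,\infty})$; the orthogonal martingale part of $Y^{p,\infty}$ is already adapted to $\F^{\tilde{\Theta}^{p,\infty}}$ and poses no additional difficulty.

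To establish this Brownian property in the limit, I plan to pass the corresponding discrete-level identity through the weak convergence. In the discretized setting, $\wn$ is $b$-adapted, $\Ypn$ is $\bF^p$-adapted by Proposition \ref{lemma: iter random env}(i), and $\Xpn$ is $\bF^p$-adapted as the solution of the SDE driven by $(b,w^p)$ with coefficients that are functionals of $(\wn,\Ypn)$. Consequently $\F^{\tilde{\Theta}^{p,n}, \Xpn} \subseteq \bF^p$, so $(b,w^p)$ is a Brownian motion for $\F^{\tilde{\Theta}^{p,n}, \Xpn}$ as well. For every $s<t$, every $\lambda \in \R$, and every bounded continuous test functional $F$ depending only on the restrictions of $(\tilde{\Theta}^{p,n}, \Xpn)$ up to time $s$, one has $\bE[(e^{i\lambda(b_t - b_s)} - e^{-\lambda^2 (t-s)/2})F] = 0$, and similarly for $w^p$. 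I would then transfer this identity to the weak limit using that evaluations at a fixed time $s$ on $\mathcal{M}([0,T],\R)$ are continuous outside a countable exceptional set of $s$ (Appendix \ref{Appendix:MZ}); the resulting identity first holds on a dense set of $s$ and then, by right-continuity of $\mathbb{F}^{p,\infty}$, for every $s \in [0,T]$.

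The hard part is precisely this passage to the limit, since compatibility is not in general preserved under weak convergence. The role of the lift by $\Ypn$ is exactly to encode $\Xpn$ as a measurable functional of $\tilde{\Theta}^{p,n}$, so that, modulo the Meyer--Zheng continuity issues just mentioned, adding $X^{p,\infty}$ to the filtration does not reveal any additional information about the future of $\binf$ or $w^{p,\infty}$. Without this lift, the enlarged filtration could reveal future values of $\winf$ through $X^{p,\infty}$, potentially breaking the $\mathbb{F}^{p,\infty}$-Brownian property of $(\binf, w^{p,\infty})$. The technical execution of this limit argument follows the strategy developed in \cite[Chapter~2]{carmona2018probabilistic2}, adapted here to the Meyer--Zheng topology.
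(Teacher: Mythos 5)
Your first claim (convergence of the controls) is handled essentially as in the paper: write $\hat{\alpha}^{p,n}=\hat{\alpha}^p(\Ypn,\wn)$, use continuity of this map under Meyer--Zheng convergence and the continuous mapping theorem, then identify the limit with the \cadlag process $\hat{\alpha}^{p,\infty}$ via a.e.\ coincidence. That part is fine.

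The gap is in the compatibility argument. You reduce compatibility of $\tilde{\Theta}^{p,\infty}$ with $\F^{p,\infty}$ to the statement that $(\binf,w^{p,\infty})$ remains a Brownian motion after enlarging $\F^{\tilde{\Theta}^{p,\infty}}$ by $X^{p,\infty}$, and you propose to obtain this by passing discrete-level martingale identities through the weak limit. This reduction is not valid: by \eqref{eqn: immersion}, compatibility requires that $\sigma(\tilde{\Theta}^{p,\infty}_s,\, s\le T)$ be conditionally independent of $\ItF^{p,\infty}_t$ given $\ItF^{\tilde{\Theta}^{p,\infty}}_t$, i.e.\ it concerns the entire future of the environment --- including $\winf$, $c^\infty$ and $Y^{p,\infty}$ --- and is neither implied by nor equivalent to the Brownian property of the noises in the enlarged filtration (that Brownian property is a separate ingredient of admissibility, handled elsewhere in the paper). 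The correct route, and the one the paper takes, is to invoke Proposition \ref{prop: Xpinf satisfies FSDE}: since $X^{p,\infty}$ satisfies \eqref{eqn: limit SDE state} with drift driven by $\hat{\alpha}^{p,\infty}=-\bar{\Lambda}^p(Y^{p,\infty}+\winf)$, it is adapted to $\F^{\Psi^{p,\infty}}$ with $\Psi^{p,\infty}=(\tilde{\Theta}^{p,\infty},\hat{\alpha}^{p,\infty})$, and because $\hat{\alpha}^{p,\infty}$ is a function of components of $\tilde{\Theta}^{p,\infty}$ one gets $\F^{p,\infty}=\F^{\tilde{\Theta}^{p,\infty},X^{p,\infty}}=\F^{\tilde{\Theta}^{p,\infty}}$. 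Compatibility is then immediate, since adaptedness of a process to a filtration implies its compatibility with it (cf.\ Remark \ref{rmk: compatibility-approx-ocp}); no passage of martingale identities to the limit is needed, and such a passage would in any case not deliver the full immersion statement. You state the right intuition about the role of the lift, but you never draw this conclusion, so your proof of the second claim does not close.
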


\begin{proof}
As in the proof of \cite[Proposition 3.12]{carmona2018probabilistic2}, we aim at expressing the filtration $\F^{p,\infty}$ as the completion of the natural filtration of a process that does not explicitly depend on $X^{p,\infty}$. To do so, we exploit the lifted environment $\mathcal{W}^{p,\infty}$. The  optimal controls of the typical agents of the two populations obtained in the discretized setting form tight sequences in $\mathcal{M}([0,T],\R)$ since those controls are of the form of $\hat{\alpha}^{p,n}_t := \hat{\alpha}^p(\Ypn_t, \wn_t)$, where the function $\hat{\alpha}^p$ is given in \eqref{eqn: optimal controls}. 
For the same reason, applying the continuous mapping theorem and 
\cite[Lemma 3.5]{carmona2018probabilistic2}, 
the sequence $(\hat{\alpha}^{p,n})_{n\in\N}$ converges in distribution on $\mathcal{M}([0,T],\R)$ to a process $\tilde{\alpha}^{p,\infty}$ such that 
$\hat{\alpha}^{p,\infty}_t := \tilde{\alpha}^{p,\infty}_t$, for  a.e. $t\in[0,T]$, $\mathbb{P}^\infty$-a.s.  
Moreover, since both $(\hat{\alpha}^{p,\infty}_t)_{t\in[0,T]}$ and $(\tilde{\alpha}^{p,\infty})_{t\in[0,T]}$ are \cadlag processes, they coincide for every $t \in[0,T]$ (see \cite[Section IV. 44]{dellacherie1978probabilities}). This proves the first part of the proposition and also implies that $\hat{\alpha}^{p,\infty}$ is adapted to $\F^{p,\infty}$. 

For the second claim, we rely on Proposition \ref{prop: Xpinf satisfies FSDE} below, which shows in particular that  $X^{p,\infty}$ is adapted to $\F^{\Psi^{p,\infty}}$, with $\Psi^{p,\infty}:=  (\tilde{\Theta}^{p,\infty}, \hat{\alpha}^{p,\infty})$ taking values on $\Omega^p_{\text{input}} \times \mathcal{M}([0,T],\R)$. 
Since $\hat\alpha^{p,\infty}$ is a measurable functional of $\tilde\Theta^{p,\infty}$, we have $\mathbb F^{\Psi^{p,\infty}}=\mathbb F^{\tilde\Theta^{p,\infty}}$. Hence, the adaptedness of $X^{p,\infty}$ to $\mathbb F^{\Psi^{p,\infty}}$ implies that $\F^{p,\infty}=\mathbb F^{\tilde\Theta^{p,\infty},X^{p,\infty}}=\mathbb F^{\tilde\Theta^{p,\infty}}$. This yields the desired compatibility.
\end{proof}

We are left to prove the following main convergence result.

\begin{proposition}
\label{prop: Xpinf satisfies FSDE}
In the above setting, $X^{p,\infty}$ satisfies the following forward equation: 
\begin{small}
\begin{equation}
\label{eqn: limit SDE state}
X^{p,\infty}_t =  \eta^{p,\infty} + \int_0^t (\hat\alpha^{p,\infty}_s + l^p(s,\winf_s))ds  + \int_0^t \sigma^{p,0}(s,\winf_s) d\binf
_s + \int_0^t \sigma^p(s,\winf_s) dw^{p,\infty}_s,
\end{equation}
\end{small} 
where $\hat\alpha^{p,\infty}$ is given by \eqref{eqn: optimal controls limit game} and is the limit in distribution of $\hat{\alpha}^{p,n}$. As a consequence, $X^{p,\infty}$ is $\F^{\eta^{p,\infty}, \binf, w^{p,\infty}, \winf, \hat\alpha^{p,\infty}}$-adapted. 
\end{proposition}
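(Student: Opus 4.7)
The plan is to identify the limit of the forward SDE satisfied by the discretized state process $X^{p,n}$ from the setup of \S \ref{section: discretization procedure}, namely
\begin{equation*}
X^{p,n}_t = \xi^p + \int_0^t \bigl( \hat\alpha^{p,n}_s + l^p(s,\varpi^n_s) \bigr) ds + \int_0^t \sigma^{0,p}(s,\varpi^n_s) db_s + \int_0^t \sigma^p(s,\varpi^n_s) dw^p_s,
\end{equation*}
where $\hat\alpha^{p,n}_s = -\bar\Lambda^p(Y^{p,n}_s + \varpi^n_s)$, and to pass to the limit term by term along the subsequence defining $\Theta^\infty$. By the tightness established in Proposition \ref{prop:tightness} and Skorohod's representation theorem (applicable since $\Omega_{\text{input}}$ is Polish), I would work on an auxiliary probability space on which $\Theta^n \to \Theta^\infty$ almost surely: in particular $(X^{p,n})_n$ converges uniformly on $[0,T]$ to $X^{p,\infty}$, while $(\varpi^n, Y^{I,n}, Y^{S,n})_n$ converges to $(\varpi^\infty, Y^{I,\infty}, Y^{S,\infty})$ in the Meyer-Zheng topology.

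For the drift term I would invoke the Meyer-Zheng stability property of time integrals recalled in Appendix \ref{Appendix:MZ}: Lebesgue integrals of bounded continuous functionals of Meyer-Zheng-convergent processes converge at almost every time. Combined with the continuity of $l^p(s,\cdot)$ granted by Assumption \ref{assumption: coefficients}, with the continuous dependence of $\hat\alpha^{p,n}_s$ on $(Y^{p,n}_s, \varpi^n_s)$, and with the uniform boundedness of $\varpi^n, Y^{p,n}$ from Lemma \ref{lemma: boundedness fourth moment sequence of processes}, this yields, for almost every $t \in [0,T]$ and almost surely,
\begin{equation*}
\int_0^t \bigl( \hat\alpha^{p,n}_s + l^p(s,\varpi^n_s) \bigr) ds \longrightarrow \int_0^t \bigl( \hat\alpha^{p,\infty}_s + l^p(s,\varpi^\infty_s) \bigr) ds.
\end{equation*}

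The main obstacle concerns the stochastic integrals, since Meyer-Zheng convergence of the integrands does not in itself imply convergence of the integrals. To overcome this, I would invoke the stability theorem for weak convergence of stochastic integrals of Kurtz-Protter \cite{kurtz1991random}: the integrators, being continuous martingales with linear quadratic variation, satisfy the uniformly controlled variations condition trivially, while the integrands $\sigma^{0,p}(\cdot,\varpi^n_\cdot)$ and $\sigma^p(\cdot,\varpi^n_\cdot)$ are uniformly bounded (again by Assumption \ref{assumption: coefficients} and the a priori bound on $\varpi^n$) and Meyer-Zheng-convergent by continuity of $\sigma^{0,p}, \sigma^p$ in the $\varpi$-variable. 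This yields convergence of the stochastic integrals to their natural counterparts driven by $b^\infty$ and $w^{p,\infty}$, which are Brownian motions in the filtration generated by $\tilde\Theta^{p,\infty}$ since the Brownian-motion property is preserved by the weak limit.

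Combining the three steps, $X^{p,\infty}$ satisfies equation \eqref{eqn: limit SDE state} almost surely for almost every $t \in [0,T]$, and the identity extends to every $t \in [0,T]$ by the pathwise continuity of both sides ($X^{p,\infty}$ is continuous as a uniform limit of continuous paths). The adaptedness assertion is then immediate from the representation \eqref{eqn: limit SDE state}: the right-hand side is the sum of the initial datum $\eta^{p,\infty}$, a Lebesgue integral of $\mathbb{F}^{\eta^{p,\infty}, b^\infty, w^{p,\infty}, \varpi^\infty, \hat\alpha^{p,\infty}}$-adapted processes, and stochastic integrals with respect to $b^\infty$ and $w^{p,\infty}$ of integrands that are continuous functions of $\varpi^\infty$, hence adapted to the same filtration.
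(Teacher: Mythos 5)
Your overall architecture (pass to the limit term by term in the prelimit SDE, using tightness, Skorohod representation, boundedness of $Y^{p,n},\varpi^n$, and continuity of the coefficients, then read off adaptedness from the representation) is the same as the paper's, and your treatment of the drift term is sound in substance — although note that Appendix \ref{Appendix:MZ} only recalls the tightness criterion, not a stability result for time integrals; the paper handles the drift via \cite[Lemma 3.6]{carmona2018probabilistic2}, while your route (a.s.\ convergence in $dt$-measure plus uniform boundedness plus dominated convergence) works equally well.

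The genuine gap is in the step you yourself flag as the main obstacle: the stochastic integrals. The Kurtz--Protter stability theorem you invoke does not apply under the hypotheses you give. Its standard form requires joint convergence of the pair (integrand, integrator) in the Skorohod $J_1$ topology, which you do not have — the integrands $\sigma^{0,p}(\cdot,\varpi^n_\cdot)$, $\sigma^p(\cdot,\varpi^n_\cdot)$ converge only in the Meyer--Zheng sense. The Meyer--Zheng variant in \cite{kurtz1991random} does allow the integrand to converge only on $\mathcal{M}([0,T],\R)$, but then it requires a uniform bound on the \emph{conditional variation} of the integrands (a quasimartingale-type condition), not merely a uniform sup bound; and under Assumption \ref{assumption: coefficients}, where $\sigma^p$, $\sigma^{0,p}$ are only continuous in $\varpi$ and measurable in $t$, there is no way to control the conditional variation of $t\mapsto\sigma(t,\varpi^n_t)$ in terms of that of $\varpi^n$. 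Moreover, after Skorohod representation the integrator itself changes with $n$ ($\hat b^n\to\hat b^\infty$ only uniformly), and $\int\sigma(\cdot,\hat\varpi^\infty_\cdot)\,d(\hat b^n-\hat b^\infty)$ is not small by It\^o isometry just because $\|\hat b^n-\hat b^\infty\|_\infty\to0$; your argument does not address this. The paper's proof closes exactly this hole by hand: Skorohod representation of $(\varpi^n,b)$, dominated convergence to get $L^2(dt\otimes d\prob)$ convergence of the volatilities (using their uniform boundedness), and then an approximation of the limit integrand by bounded step processes, for which the integral against $\hat b^n$ is a finite sum controlled by $\sup_t|\hat b^n_t-\hat b^\infty_t|$, with Doob's inequality handling the approximation errors uniformly in $n$. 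Replacing your citation by an argument of this type (or by some other device that simultaneously handles the Meyer--Zheng-only convergence of the integrand and the change of integrator) is needed to make the proof complete.
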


    The proof is given in Appendix \ref{appendix: measurability of Xinf} and is based on the steps described in \cite[Proposition 3.11]{carmona2018probabilistic2}. The main difference is given by the convergence in distribution of the random environment $(\wn)_{n\in\N}$, that in the case presented here is on the Meyer-Zheng space, while in the framework of \cite{carmona2018probabilistic2} the convergence of the random environment $(\mu^n)_{n\in\N}$ is on $\mathcal{D}([0,T],\R)$ endowed with the $J1$ topology. This difference affects mainly the first step of the proof of \cite[Proposition 3.11]{carmona2018probabilistic2} regarding the structure of $X^{p,\infty}$, which must satisfy the state equation.

\subsubsection{Optimality of the weak limit} 
\label{section: optimality weak equilibria}
In this section, we prove stability of the discretized equilibria when the number of agents goes to infinity. What is left to prove is that $\hat{\alpha}^{p,\infty}$ and the corresponding process $X^{p,\infty}$, obtained as limits in distribution, are indeed optimal for the control problem defined with respect to the filtration $\F^{p,\infty}$.  
Consider then the problem


\begin{equation}
\label{eqn: limit ocp}
\begin{aligned}
&\inf_{\alpha^p \in \mathbb{H}^2(\mathbb{F}^{p,\infty})} J^{p,\winf}(\alpha^p),\\
&J^{p,\winf}(\alpha^p) := \E^{\infty} \bblq \int_0^T f^p(s, X_s, \winf_s, \alpha_s,\chi^{p,\winf}_s) ds + g^p(X_T, \chi^{p,\winf}_T) \bbrq,\\
&\text{subject to} \\
&
\begin{cases} 
dX^p_t = (\alpha^p_t + l^p(t,\winf_t))dt + \sigma^{p,0}(t,\winf_t) d\binf_t + \sigma^{p}(t,\winf_t) dw^{p,\infty}_t,\\
X^p_0 = \eta^{p,\infty}.
\end{cases}
\end{aligned}
\end{equation}
In view of Proposition \ref{lemma: iter random env}, the unique optimal control is given by 
\begin{equation}
\label{eqn:optimal_bar}
    \bar{\alpha}^{p,\infty}_t = -\bar{\Lambda}^p (\bar{Y}_t^{p,\infty} +\varpi_t^{\infty}),
\end{equation}
where 
$
(\bX^{p,\infty},\bY^{p,\infty},\bZ^{p,0,\infty},\bZ^{p,\infty},\bM^{p,\infty})
$
solves the FBSDE 
\begin{equation}
\label{eqn: FBSDE typical agent limit game}
\begin{cases} 
d\bar X^{p,\infty}_t 	&= \big(-\bar{\Lambda}^p (\bar{Y}_t^{p,\infty} +\varpi_t^{\infty}) + l^p(t,\winf_t) \big) dt + \sigma^{p,0}(t,\winf_t) d\binf_t + \sigma^p(t,\winf_t) dw^{p,\infty}_t,\\ 
 \bar X^{p,\infty}_0 	&= \eta^{p, \infty},\\ 
d \bar Y^{p,\infty}_t	&= -\partial_x\bar{f}^p(t, \bar X^{p,\infty}_t, \chi^{p,\winf}_t) dt + \bar Z^{p,0,\infty}_t db^{\infty}_t + \bar Z^{p,\infty}_t dw^{p,\infty}_t + d\bar M^{p,\infty}_t,\\ 
 \bar Y^{p,\infty}_T 	&= \partial_xg^p(\bar X^{p,\infty},\chi^{p,\varpi^\infty}_T),
\end{cases}
\end{equation}
in the admissible setup $(\Omega^\infty,\ItF^\infty,\prob^\infty, \F^{p, \infty}, \eta^{p, \infty}, (b^\infty,w^{p,\infty}) ,(\chi^{p,\varpi^{\infty}}, Y^{p,\infty}))$. Observe that, by Definition \ref{def: admissibility}, in order to prove that the setup is admissible, it remains to show that $(b^\infty,w^{p,\infty})$ is a Brownian motion for the filtration $\F^{p, \infty}$. This follows by standard arguments (we refer the reader to \cite[Lemma 2.41]{tesilanaro} for a detailed proof).

\begin{proposition}
\label{prop: stability weak equilibria}
For $p=I,S$, consider the optimal control problem in the discretized admissible setups $(\bOm, \bItF, \bprob, \bF^p, \eta^p, (b, w^p), (\chi^{p,\varpi^n}))$ defined above and let $\hat{\alpha}^{p,n}$ be the optimal control; note that the filtration does not depend on $n$. Let the limit in distribution and the subsequence be given as above.
Then the sequence $(\hat{\alpha}^{p,n})_{n\in\N}$ converges in distribution on $\mathcal{M}([0,T],\R)$ to a càdlàg process $\tilde{\alpha}^{p,\infty}$ (defined on the same limit probability space) such that $\tilde{\alpha}^{p,\infty}_t= \bar{\alpha}^{p,\infty}_t$ for every $t\in[0,T]$, $\mathbb{P}^\infty$-a.s, where $\bar{\alpha}^{p,\infty}$ is given by \eqref{eqn:optimal_bar}. 
As a consequence, we have that 
\begin{equation}
X_t^{p,\infty}= \bar X_t^{p,\infty}  \mbox{ and } 
Y_t^{p,\infty}= \bar Y_t^{p,\infty} 
\qquad \forall t\in[0,T], \quad \mathbb{P}^\infty - a.s.
\end{equation}
\end{proposition}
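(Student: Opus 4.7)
The convergence $\hat\alpha^{p,n}\to\tilde\alpha^{p,\infty}$ in distribution on $\mathcal{M}([0,T],\R)$ and the identification $\tilde\alpha^{p,\infty}_t=\hat\alpha^{p,\infty}_t=-\bar\Lambda^p(Y^{p,\infty}_t+\winf_t)$ as \cadlag processes are already contained in Lemma \ref{lemma: compatibility condition limit ocp}. It therefore suffices to establish $X^{p,\infty}_t=\bar X^{p,\infty}_t$ and $Y^{p,\infty}_t=\bar Y^{p,\infty}_t$ for every $t$; the coincidence $\bar\alpha^{p,\infty}=\tilde\alpha^{p,\infty}$ then follows from \eqref{eqn:optimal_bar} and the definition of $\hat\alpha^{p,\infty}$. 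The plan is to show that $(X^{p,\infty},Y^{p,\infty})$, together with appropriate integrands and an orthogonal martingale, solves the FBSDE \eqref{eqn: FBSDE typical agent limit game} in the admissible setup based on $\F^{p,\infty}$, and then to conclude by the uniqueness statement in Proposition \ref{lemma: iter random env}.

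The forward equation for $X^{p,\infty}$ is already given by Proposition \ref{prop: Xpinf satisfies FSDE}, with drift $\hat\alpha^{p,\infty}_t+l^p(t,\winf_t)$, which matches the forward part of \eqref{eqn: FBSDE typical agent limit game} upon identifying $Y^{p,\infty}$ with $\bar Y^{p,\infty}$. For the backward part, the strategy is to pass to the limit via the martingale characterization. For each $n$, the process
\[
N^{p,n}_t:=Y^{p,n}_t+\int_0^t\partial_x\bar f^p(s,X^{p,n}_s,\chi^{p,\varpi^n}_s)\,ds
\]
is a uniformly bounded \cadlag $\bar\F^{p}$-martingale with terminal value $\partial_x g^p(X^{p,n}_T,\chi^{p,\varpi^n}_T)+\int_0^T\partial_x\bar f^p(s,X^{p,n}_s,\chi^{p,\varpi^n}_s)\,ds$, by the uniform bound on $\partial_x g^p$ and $\partial_x\bar f^p$ in Assumption \ref{assumption: coefficients}. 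Testing the identity $\bE[(N^{p,n}_t-N^{p,n}_s)\,\phi]=0$ against continuous bounded cylindrical functionals $\phi$ of $\Theta^n$ restricted to $[0,s]$ and passing to the weak limit via a Skorokhod representation, the continuity of the coefficients and dominated convergence on the time integral, we conclude that
\[
N^{p,\infty}_t:=Y^{p,\infty}_t+\int_0^t\partial_x\bar f^p(s,X^{p,\infty}_s,\chi^{p,\winf}_s)\,ds
\]
is a \cadlag $\F^{p,\infty}$-martingale with the desired terminal value. Compatibility in Lemma \ref{lemma: compatibility condition limit ocp} ensures that $(b^\infty,w^{p,\infty})$ is an $\F^{p,\infty}$-Brownian motion, so a standard martingale representation plus orthogonal decomposition produces $Z^{0,p,\infty},Z^{p,\infty}\in\mathbb{H}^2(\F^{p,\infty})$ and a \cadlag orthogonal martingale $M^{p,\infty}$ closing the backward equation in \eqref{eqn: FBSDE typical agent limit game}.

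We thus obtain that $(X^{p,\infty},Y^{p,\infty},Z^{0,p,\infty},Z^{p,\infty},M^{p,\infty})$ is a solution to the FBSDE \eqref{eqn: FBSDE typical agent limit game} in the admissible setup for population $p$; the uniqueness part of Proposition \ref{lemma: iter random env} then forces this solution to coincide with $(\bar X^{p,\infty},\bar Y^{p,\infty},\bar Z^{0,p,\infty},\bar Z^{p,\infty},\bar M^{p,\infty})$, yielding the claim. The main obstacle will be the rigorous passage to the limit of the martingale property under the Meyer--Zheng convergence: since $Y^{p,\infty}$ and $\winf$ are only \cadlag with possibly non-continuous trajectories, the test times $s<t$ must be chosen outside the at most countable set of fixed discontinuity points of the limits, and one must exploit the stronger uniform convergence of $X^{p,n}\to X^{p,\infty}$ in $\mathcal{C}([0,T],\R)$ from Proposition \ref{prop:tightness:Xn} to handle the nonlinear coefficient $\partial_x\bar f^p(s,X^{p,n}_s,\chi^{p,\varpi^n}_s)$ against the weaker Meyer--Zheng convergence of $\varpi^n$.
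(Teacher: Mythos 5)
Your route is genuinely different from the paper's. The paper never passes to the limit in the backward equation: following \cite[Proposition 3.11]{carmona2018probabilistic2} (with the technical details deferred to \cite[Proposition 2.28]{tesilanaro}), it shows that the limit pair $(\hat{\alpha}^{p,\infty},X^{p,\infty})$ is \emph{optimal} for the limit control problem \eqref{eqn: limit ocp}, using Proposition \ref{prop: Xpinf satisfies FSDE} both for the limit of the discretized optimizers and for an arbitrary competitor in $\mathbb{H}^2(\F^{p,\infty})$, and then invokes uniqueness of the optimal control (Proposition \ref{lemma: iter random env}(ii)) to get $\tilde{\alpha}^{p,\infty}=\bar{\alpha}^{p,\infty}$, whence $Y^{p,\infty}=\bar{Y}^{p,\infty}$ and, by pathwise uniqueness of \eqref{eqn: limit SDE state}, $X^{p,\infty}=\bar{X}^{p,\infty}$. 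You instead identify the limit directly as a solution of the FBSDE \eqref{eqn: FBSDE typical agent limit game} and conclude by the uniqueness part of Proposition \ref{lemma: iter random env}(i). Structurally this could work, and your treatment of the drift term, the uniform bound on $N^{p,n}$, the restriction of test times to the full-measure set coming from Meyer--Zheng finite-dimensional convergence, and the Kunita--Watanabe decomposition once $(b^\infty,w^{p,\infty})$ is known to be an $\F^{p,\infty}$-Brownian motion are fine in outline (note, though, that the Brownian property does not follow from compatibility alone; the paper proves it separately via \cite[Lemma 2.41]{tesilanaro}).

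The genuine gap is the terminal condition. To apply the uniqueness of Proposition \ref{lemma: iter random env}(i) you must know that $Y^{p,\infty}_T=\partial_x g^p(X^{p,\infty}_T,\chi^{p,\winf}_T)$ $\prob^\infty$-a.s.; without this, the statement that $N^{p,\infty}$ is a c\`adl\`ag $\F^{p,\infty}$-martingale does not pin down which BSDE it closes, and the comparison with $(\bar{X}^{p,\infty},\bar{Y}^{p,\infty})$ collapses. Meyer--Zheng convergence of $(\wn,Y^{p,n})$ yields convergence of finite-dimensional laws only along a set $I\subset[0,T]$ of full Lebesgue measure which need not contain $T$: neither $\wn_T$ nor $Y^{p,n}_T=\partial_x g^p(X^{p,n}_T,\chi^{p,\wn}_T)$ is known to converge, and the c\`adl\`ag version of the limit has an a priori unidentified value at $T$. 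The right-continuity argument that rescues fixed interior times (as in the end of the proof of Theorem \ref{thm: consistency condition weak limit}) is unavailable at the right endpoint, so your phrase ``with the desired terminal value'' hides a missing step. It can be repaired --- for instance by extending all processes as constants on $[T,T+1]$ before taking Meyer--Zheng limits, or by adjoining the terminal random variables $(\wn_T,Y^{p,n}_T,X^{p,n}_T)$ as extra coordinates in the tightness argument and identifying their joint limit --- but some such device has to be supplied before the FBSDE-uniqueness step can legitimately be invoked.
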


\begin{proof}
 The proof is analogous to that of \cite[Proposition 3.11]{carmona2018probabilistic2}. Since the steps are quite similar, we refer to \cite[Proposition 2.28]{tesilanaro} for the detailed technical computations. We just remark that, once compatibility is verified thanks to Lemma \ref{lemma: compatibility condition limit ocp}, a key step is to use Proposition \ref{prop: Xpinf satisfies FSDE}, which we prove in Appendix \ref{appendix: measurability of Xinf}, for the optimal control and for another control in $\F^{p, \infty}$. Note also that Lemma  \ref{lemma: boundedness fourth moment sequence of processes} implies the required uniform square integrability of the processes. 
 To prove the last claim, note that the uniqueness of the optimal control implies that $\tilde{\alpha}^{p,\infty}_t= \bar{\alpha}^{p,\infty}_t$, for a.e. $t\in[0,T]$ $\mathbb{P}^\infty$-a.s, and thus also for all $t\in[0,T]$, as they are càdlàg processes. Then Lemma \ref{lemma: compatibility condition limit ocp} implies that $\tilde{\alpha}^{p,\infty}_t= \hat{\alpha}^{p,\infty}_t$ which yields 
 \[
 \bar{Y}_t^{p,\infty} +\varpi_t^{\infty} = 
 Y_t^{p,\infty} +\varpi_t^{\infty}, \qquad \forall t\in[0,T], \quad \mathbb{P}^\infty - a.s.
 \]
 Therefore, we obtain that $Y_t^{p,\infty}= \bar Y_t^{p,\infty}$ and hence $X_t^{p,\infty}= \bar X_t^{p,\infty}$ $\mathbb{P}^\infty$ - a.s. for every $t\in[0,T]$ by pathwise uniqueness of the solution to the SDE \eqref{eqn: limit SDE state}. 
\end{proof}

\subsection{Consistency condition for the limit game} 
\label{section: consistency condition}

In this section, we finally show that the procedure described at the beginning of \S \ref{section: stability} provides a weak lifted mean-field equilibrium.  To achieve this, it remains to show that the consistency condition \eqref{eqn: consistency condition for winf} holds for the limit price process $\varpi^\infty$. We remark that such condition is quite different from the consistency condition for standard mean-field games and, therefore, our proof differs from the one in \cite{carmona2018probabilistic2}.  


\begin{theorem}
\label{thm: consistency condition weak limit}
The component $\winf$ of $\Thetainf$ introduced in \eqref{eqn: weak limit thetan}, defined as the \cadlag version of the limit in distribution on $\mathcal{M}([0,T],\R)$ of $(\wn)_{n\in\N}$, satisfies \eqref{eqn: consistency condition for winf}.
\end{theorem}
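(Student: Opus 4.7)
The plan is to pass to the limit in the discrete identity \eqref{eqn: wn} via a time-integrated, test-function reformulation, which circumvents the fact that pointwise evaluation is discontinuous in the Meyer-Zheng topology. Introduce the residuals
\[
R^n_t:=\wn_t+\constantni\big(n_I\bar\Lambda^I\YIn_t+n_S\bar\Lambda^S\YNn_t\big),\qquad
R^\infty_t:=\winf_t+\constantni\big(n_I\bar\Lambda^I\YIinf_t+n_S\bar\Lambda^S\YNinf_t\big).
\]
The discrete consistency \eqref{eqn: wn} is equivalent to $\bE[R^n_t\,\phi]=0$ for every bounded $\G^n_t$-measurable $\phi$. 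Since $\G^n_t$ is generated by $\bVn_{i(t)}$ and $\wn$ is adapted to $(\G^n_s)_s$, the class of admissible $\phi$ includes random variables of the form
\[
Z^n:=F\big(\Vn_{i_1(n)},\dots,\Vn_{i_p(n)},\wn_{s_1},\dots,\wn_{s_q}\big)
\]
for bounded continuous $F$, provided each $t_{i_r(n)}\le t$ and each $s_l\le t$.

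Fix $0<s_1<\cdots<s_q<a<b\le T$ with every $s_l$ in the full-Lebesgue-measure set of times at which Meyer-Zheng convergence $\wn\to\winf$ entails convergence in distribution of the pointwise evaluations (see Appendix \ref{Appendix:MZ}). Pick dyadic approximants $t_{i_r(n)}\to\bar s_r<a$ for fixed $\bar s_1<\cdots<\bar s_p$; since $|\Vn_j-b_{t_j}|\le 2^{-n}$ and $b$ is continuous, one has $\Vn_{i_r(n)}\to\binf_{\bar s_r}$ jointly with $(\wn_{s_l})_l\to(\winf_{s_l})_l$ in distribution, and the map $X\mapsto \int_a^b X_t\,dt$ is continuous on $\mathcal{M}([0,T],\R)$. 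Combined with Proposition \ref{prop:tightness}, the continuous mapping theorem yields the joint weak convergence of $\big(Z^n,\int_a^b R^n_t\,dt\big)$ to $\big(Z^\infty,\int_a^b R^\infty_t\,dt\big)$, where $Z^\infty$ is obtained from $Z^n$ by the obvious substitution. Integrating $\bE[R^n_t Z^n]=0$ over $t\in[a,b]$ by Fubini and invoking the uniform $L^2$ bounds of Lemma \ref{lemma: boundedness fourth moment sequence of processes} to secure uniform integrability, one passes to the limit and obtains $\Einf\big[Z^\infty\int_a^b R^\infty_t\,dt\big]=0$.

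Since $a,b$ with $s_q\vee\bar s_p<a<b\le T$ are arbitrary, a Lebesgue-point argument yields $\Einf[R^\infty_t Z^\infty]=0$ for almost every such $t$, and right-continuity of the càdlàg representatives of $\winf,\YIinf,\YNinf$ combined with dominated convergence extends this to every $t$ in that range. Varying $s_l$ and $\bar s_r$ in a countable dense subset of the above continuity set delivers, for every $t\in(0,T]$ and every bounded continuous $F$,
\[
\Einf\big[R^\infty_t\,F(\binf_{\bar s_1},\dots,\binf_{\bar s_p},\winf_{s_1},\dots,\winf_{s_q})\big]=0.
\]
The family of such test random variables is closed under multiplication and generates $\sigma(\binf_u,\winf_v:u,v<t)$; a monotone class argument, together with the right-continuity of the usual augmentation $\F^{\winf,\binf}_t$, extends the vanishing to every bounded $\F^{\winf,\binf}_t$-measurable test variable, which is equivalent to \eqref{eqn: consistency condition for winf}. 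The main technical difficulty is the need to combine Meyer-Zheng convergence (which prohibits pointwise evaluation at every $t$) with test functions adapted to the enlarged filtration $\F^{\winf,\binf}_t$; it is resolved by the interplay between time averaging, the careful selection of $s_l$ in the Meyer-Zheng continuity set, and the right-continuity of the càdlàg limits.
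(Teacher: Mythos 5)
Your proposal is correct and follows essentially the same route as the paper: rewrite the discrete consistency \eqref{eqn: wn} as a test-function identity, pass to the limit using the fact that Meyer--Zheng convergence gives convergence of finite-dimensional distributions along a further subsequence at a full-Lebesgue-measure set of times (this is \cite[Theorem 5]{meyer1984tightness}, which the paper invokes; it is not actually contained in Appendix \ref{Appendix:MZ}), and then extend to all times by right-continuity of the c\`adl\`ag versions before identifying the conditional expectation with respect to $\ItF^{\winf,\binf}_t$. Your time-averaging/Lebesgue-point detour and the explicit monotone-class step are harmless variants of the paper's direct pointwise passage at good times, and the uniform bound $C_B$ from Lemma \ref{lemma: boundedness fourth moment sequence of processes} (with a trivial truncation, since $x\mapsto\int_a^b x_t\,dt$ is continuous on $\mathcal{M}([0,T],\R)$ only on uniformly bounded sets) is what legitimizes both your integral functional and the limit of expectations, exactly as in the paper.
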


\begin{proof}
We recall that $\wn$ was defined as the solution of \eqref{eqn: wn}. Let us introduce the process
\begin{equation}\begin{aligned}
\label{eqn: ch2 - mathcalV}
\mathcal{V}^{n}_t	&:= \bar{V}^{n}_i,\quad t\in[t_i,t_{i+1}),\ \forall i = 0,1\dots,2^n -1,\\
\mathcal{V}^{n}_T	&:= \bar{V}^{n}_{2^n - 1}.
\end{aligned}
\end{equation}
As a consequence, 
\[
\wn_t= -\constantni \bE\Bigl[n_I\bar{\Lambda}^I\YIn_t + n_S\bar{\Lambda}^S \YNn_t \bigm|\mathcal{H}^{n}_t\Bigr], 
\quad\text{ for all }t\in[0,T],
\]
where $\mathcal{H}^n_t:= \sigma\{ \mathcal{V}^{n}_s:\ s\leq t\}$. In particular, $\wn_t$ is $\mathcal{H}^{n}_t$-measurable for each $t\in[0,T]$. Thus, we can define $\mathcal{G}^{n}_t:= \sigma\{ \wn_s,\mathcal{V}^{n}_s:\ s\leq t\} = \mathcal{H}^{n}_t$, for each $t\in[0,T]$. We notice that
 \begin{displaymath}
 \bE\blq\wn_t +\constantni( n_I\bar{\Lambda}^I\YIn_t + n_S\bar{\Lambda}^S\YNn_t) \bigm|\mathcal{G}^{n}_t\brq = 0, \quad\forall t\in[0,T],\ n\in\N.
 \end{displaymath}
This is equivalent to
\begin{equation}
 \label{eqn: consistency condition limit}
\bE\blq 
h( \wn_{t_1}, \dots, \wn_{t_M}, \mathcal{V}^n_{t_1}, \dots, \mathcal{V}^n_{t_M})
\big(\wn_t +\constantni( n_I\bar{\Lambda}^I\YIn_t + n_S\bar{\Lambda}^S\YNn_t)\big)\brq = 0,
\end{equation}
for any $n$ and any $t\in[0,T]$, for every $M\in \N$, for each $0\leq t_1< \dots< t_M\leq t$ and for any bounded and continuous function $h:\R^{2M}\rightarrow \R$.  
We pass to the limit in the above equation. 
We recall that $\mathcal{V}^n$ is a discretization of the Brownian motion $b$ defined on the canonical space $\bOm^0$ in \S \ref{section: discretization setup}. It is not hard to show that $\mathcal{V}^n$ converges to $b$ in probability in $\mathcal{D}([0,T], \R)$ (see \cite[Lemma 2.31]{tesilanaro} for a detailed proof).
Since $( \wn, Y^{I,n}, Y^{S,n},b)$ converges to $( \w^\infty, Y^{I,\infty}, Y^{S,\infty},b^\infty)$ in law on $\mathcal{M}([0,T], \R^3) \times \mathcal{C}([0,T], \R)$, we obtain that, up to a subsequence, $\Phi^n:=( \wn, Y^{I,n}, Y^{S,n}, \mathcal{V}^n, b)$ converges to $\Phi^\infty:=( \w^\infty, Y^{I,\infty}, Y^{S,\infty}, b^\infty, b^\infty)$ in law on $\mathcal{M}([0,T], \R^3) \times \mathcal{D}([0,T], \R) \times \mathcal{C}([0,T], \R)$.  
Hence, \cite[Thm 5]{meyer1984tightness} gives  that there exists a further subsequence $(n_k)_k$ and a set $I\subset [0,T]$ of full Lebesgue measure such that the finite dimensional distributions of $(\Phi^{n_k}_s)_{s\in I}$ converge to those of $(\Phi^{\infty}_s)_{s\in I}$, as $k\to \infty$.
Therefore we can pass to the limit in \eqref{eqn: consistency condition limit} and obtain 
\begin{equation*}
\E^\infty\blq 
h( \w^\infty_{t_1}, \dots, \w^\infty_{t_M}, b^\infty_{t_1}, \dots, b^\infty_{t_M})
\big(\w^\infty_t +\constantni( n_I\bar{\Lambda}^I Y^{I,\infty}_t + n_S\bar{\Lambda}^SY^{S,\infty}_t)\big)\brq = 0,
\end{equation*}
 for every $M\in \N$ and $t\in[0,T]$,  
 for each $0\leq t_1< \dots< t_M\leq t$ with $t, t_1,\dots,t_M \in I$ and for any bounded and continuous $h:\R^{2M}\rightarrow \R$. Since trajectories are right continuous, the same property also holds for any $t, t_1, \dots, t_M$ in $[0,T]$, thus proving the claim. 
\end{proof}

We have now all the ingredients to prove Theorem \ref{thm: existence of solutions}.

\begin{proof}[Proof of Theorem \ref{thm: existence of solutions}]
In Proposition \ref{prop:tightness}, we proved that the sequence $(\Theta^n)_{n\in\N}$ introduced in \eqref{eqn: tight sequence} is tight on $\Omega_{\text{input}}$. Hence, we are allowed to introduce a weak limit of a subsequence, defined on a suitable probability space $(\Omega^{\infty}, \ItF^{\infty}, \prob^{\infty})$ and denoted by $\Theta^{\infty}$ as the one introduced in \eqref{eqn: weak limit thetan}. 
Hence, 
\begin{equation*}
(\Omega^{\infty}, \ItF^{\infty}, \prob^{\infty}, \F^{,\infty}, (\eta^{I,\infty},\eta^{S,\infty}), (\binf,w^{I,\infty}, w^{S,\infty}, c^{\infty}), (\winf,Y^{I,\infty}, Y^{S,\infty}))
\end{equation*}
 is a weak lifted mean-field equilibrium in the sense of Definition \ref{def: mf price process}: 
properties (1)–(2) follow from the construction and Lemma \ref{lemma: compatibility condition limit ocp},
property (3) follows from Theorem \ref{thm: consistency condition weak limit} and property (4) follows from Proposition \ref{prop: stability weak equilibria}.
\end{proof}

\section{An asymptotic version of the market clearing condition} 
\label{section: weak market clearing}

In this section, we investigate the relation between a weak mean-field equilibrium 
and the market clearing condition in an economy populated by finitely many agents. More precisely, we study whether the mean-field equilibrium price provides an asymptotic version of the finite-player market clearing relation when the number of agents tends to infinity and each agent solves her stochastic optimal control problem taking the mean-field price process $\w$ 
as exogenous.
{The purpose of this section is instead to justify, under stronger assumptions, the interpretation of the mean-field equilibrium price as an asymptotic market clearing price.}

We show that the asymptotic version of the market clearing condition is satisfied by a suitable modification of the (weak unlifted) equilibrium price process introduced in Definition \ref{def: unlifted mf price process}. More precisely, we consider here a price process $\w$ satisfying conditions (1) and (2) of Definition \ref{def: unlifted mf price process} and a different consistency condition. In particular, we focus on the modified version of \eqref{eqn: mc price process mf} in which the adjoint processes $Y^I$ and $Y^S$ are respectively replaced by the projected adjoint processes $\widetilde Y^I$ and $\widetilde Y^S$ of Definition \ref{def: unlifted mf price process}, namely
\begin{equation}
    \label{eqn: project eq price v1}
    \w_t = -(n_I\bar{\Lambda}^I + n_S\bar{\Lambda}^S)^{-1} \bl n_I\bar{\Lambda}^I\E\bigl[ \tilde{Y}^{I}_t\bigm| \ItF^{\w,B,C}_t\bigr] + n_S\bar\Lambda^S \E\bigl[ \tilde{Y}^{S}_t\bigm| \ItF^{\w,B}_t\bigr]\br,\   \forall t\in[0,T],\  \mathbb{P}-a.s.
\end{equation}

\begin{remark}
{The consistency condition in \eqref{eqn: project eq price v1} should be understood as a technical modification of Definition \ref{def: unlifted mf price process}, introduced in order to obtain the probabilistic structure needed for the asymptotic analysis below. In particular, as explained in Remark \ref{rmk: extra assumption for asymptotic mk clearing} below, it yields the conditional i.i.d. structure exploited in the proof of Theorem \ref{thm: weak mk clearing condition}. We stress, however, that the resulting notion of equilibrium is narrower than Definition \ref{def: mf price process} and should not be interpreted as the most general economic specification of our framework. 
}
%
\end{remark}

\subsection{Weak formulation of the finite-player economy under asymmetric information}

In order to pass from the mean-field equilibrium to a weak $N$-player economy, the probabilistic structure of the equilibrium must be carefully handled. 
Since the total randomness affecting the economy (and correlated with the process $\varpi$ satisfying \eqref{eqn: project eq price v1}) cannot be fixed a priori, we must define the $N$-agent economy on a probability space sufficiently rich to guarantee the existence of $\w$. Therefore, we shall define a \emph{weak formulation} of the market with $N$ agents.
To this end, it is fundamental to consider the unlifted notion of equilibrium of Definition \ref{def: unlifted mf price process}.
Let then 
\begin{equation*}
\big(\Omega, \ItF, \prob, \F, (\xi^I, \xi^S), (B,W^I,W^S), (\w,C ) \big)
\end{equation*}
be a weak unlifted mean-field equilibrium in the sense of Definition \ref{def: unlifted mf price process}, where, as discussed before, condition \eqref{eqn: conclusion winf projected} is replaced by \eqref{eqn: project eq price v1}. In particular, since  there is no additional source of randomness given by the presence of the adjoint processes $\bar{Y}^I,\bar{Y}^S$, we can follow the approach of \cite[Theorem 3.13]{carmona2018probabilistic2} to transport the mean-field equilibrium introduced in Definition \ref{def: unlifted mf price process} on the extended canonical space 
$(\bOm:=\bOm^0\times\bOm^I\times\bOm^S,\bar{\ItF},\bar{\prob})$ defined as 
\begin{equation} 
\label{eqn: canonical space}
\begin{cases}
\bar{\Omega}^0 := \mathcal{C}([0,T],\R) \times \mathcal{D}([0,T],\R^2),\\ 
\bar{\Omega}^p := \R\times \mathcal{C}([0,T],\R),\quad p= I,S,
\end{cases}
\end{equation}
where we denote the canonical process on $\bOm$ by $(b,c,\wmf,\eta^I,w^I,\eta^S,w^S)$ and by $\bprob$ the law of the equilibrium:
\begin{equation}
\label{eqn: transfer canonical distr}
\prob\circ( B,C, \w,\xi^I, W^I, \xi^S, W^S)^{-1} = \bprob\circ(b,c,\wmf,\eta^I,w^I,\eta^S,w^S)^{-1}.
\end{equation} 
Since there are no additional stochastic processes $\bar{Y}^I$  and $\bar{Y}^S$, the environment $\chi^{p,\w}$, for $p = I,S$, is independent of the idiosyncratic noises of the typical agents of the two subpopulations. This implies that the probability measure defined by the mean-field equilibrium on the canonical space is given by the product $\bprob^0 \otimes \bprob^I\otimes\bprob^S$, where
\begin{equation*}
\bprob^0 := \prob^{-1}\circ(B,\w,C),\quad
\bprob^I := \prob^{-1}\circ(\xi^I,W^I),\quad
\bprob^S := \prob^{-1}\circ(\xi^S,W^S).
\end{equation*}
As a consequence, we can define the economy with finitely many agents on (copies of) the same probability space on which the mean-field equilibrium price process (in the sense of Definition \ref{def: unlifted mf price process}) is defined. Applying Proposition \ref{lemma: iter random env}, we denote by 
\begin{equation*}
({X}^p,{Y}^p,{Z}^{p,0},{Z}^p,{M}^p) 
\quad p = I,S,
\end{equation*}
the solution to the FBSDE \eqref{eqn: FBSDE} defined on the admissible setup 
$(\bOm^0\times\bOm^I\times\bOm^S,\bItF,\bprob, \bF^{p})$
carrying the process $(\eta^p, (b,w^p), \chi^{p,\wmf})$, where $\bF^p= \F^{\eta^p, b, w^p, \chi^{p, \wmf}}$ and $\bF= \bF^I \vee \bF^S$ is the (augmented) canonical filtration. 
Since the distribution of $({X}^p,{Y}^p,{Z}^{p,0},{Z}^p,{M}^p)$ coincides with that of the mean-field equilibrium, for $p = I,S$, we deduce that $\bar{\mathbb{P}}$-a.s.
\begin{equation}
\label{eqn: equation wmf weak mc}
\wmf_t = -\constantni\Big(n_I\bar{\Lambda}^I\bE\bigl[  Y^I_t \mid \ItF^{b,\varpi^{\rm mf},c}_t \bigr] +  n_S\bar{\Lambda}^S \bE \bigl[Y^S_t\mid \ItF^{b,\varpi^{\rm mf}}_t\bigr]\Big) ,\quad \forall t\in[0,T].
\end{equation}
 In order to build the $N_I + N_S$ agents economy on a probability space rich enough to contain the solution of \eqref{eqn: equation wmf weak mc}, we consider ${N_p}$ copies of the space $(\bOm^p,\bItF^p,\bprob^p,\bar{\F}^p)$ 
 introduced in \eqref{eqn: canonical space}, for $p = I,S$,
 denoted by $(\Omega^{p,j},\ItF^{p,j},\bprob^{p,j},\bar{\F}^{p,j})_{j=1}^{N_p}$, where 
 $\bF^{p,j}=\F^{b,\eta^{p,j},w^{p,j},\chi^{p,\wmf}}$. 
 By construction, for each $j = 1,\dots, N_p$, the space $(\bOm^{p,j},\bItF^{p,j},\bprob^{p,j},\bar{\F}^{p,j})$ is rich enough to support a one-dimensional Brownian motion $w^{p,j}=(w^{p,j}_t)_{t\in[0,T]}$ and a random variable $\eta^{p,j}$ distributed as $\eta^p$ and independent of $w^{p,j}$. Thus, we can define the product space:
\begin{equation}
\label{eqn: N player setup}
\begin{cases}
\bar{\Omega}_N&:= \bOm^0\times\bOm^{I,1}\times \bOm^{I,2}\times\dots\times\bOm^{I,N_I}\times\bOm^{S,1}\times \bOm^{S,2}\times\dots\times\bOm^{S,N_S},\\ 
(\bar{\ItF}_N,\bar{\mathbb{P}}_N)		
&:= (\bItF^0\otimes \bItF^{I,1}\otimes\dots\otimes \bItF^{S,N_S},\bprob^0\otimes\dots \bprob^{S,N_S}),\\ 
\bar{\mathbb{F}}_N 				
&:= (\bItF^0_t\otimes\dots\otimes \bItF^{S,N_S}_t)_{t\in[0,T]}.
\end{cases}
\end{equation}
The $j$-th agent of population $p$ 
must solve her control problem applying controls belonging to $ \mathbb{H}^2(\bF^{p,j})$.
Thus, her optimal control is given by 
\begin{equation}
\label{eqn: optimal control N agent}
\hat{\alpha}^{\text{mf};p,j}_t	:=-\bar{\Lambda}^p(Y^{p,j}_t + \wmf_t),\quad t\in[0,T],
\end{equation}
where $({X}^{p,j},{Y}^{p,j},{Z}^{p,0,j},{Z}^{p,j},{M}^{p,j})$ solves the FBSDE \eqref{eqn: FBSDE:N}, with $\wmf$ playing the role of $\w$
in the admissible setup 
$((\bOm_N,\bar{\ItF}_N,\bar{\mathbb{P}}_N, \bF^{p,j}), (\eta^{p,j}, (b,w^{p,j}), \chi^{p,\wmf}) )$.

\subsection{The weak asymptotic market clearing condition}

We have now all the ingredients to prove the main result of this section. Let the tuple 
\[
\big(\bOm_N,\bar{\ItF}_N,\bar{\mathbb{P}}_N, \bF^{p,j}, \eta^{p,j}, (b,w^{p,j}), \chi^{p,\wmf}, {X}^{p,j},{Y}^{p,j},{Z}^{p,0,j},{Z}^{p,j},{M}^{p,j}
\big),
\quad \text{for $p=I,S$ and $j=1,\dots, N_p$},
\]
be the weak solution of the economy with $N=N_I+N_S$ agents, constructed in the above subsection. {We recall that the price process $\wmf$ is assumed to arise from a mean-field equilibrium in the sense of Definition \ref{def: unlifted mf price process} with consistency condition \eqref{eqn: project eq price v1}.
The next theorem shows that $\wmf$ satisfies an asymptotic market clearing condition in the corresponding weak finite-player economy with $N=N_I+N_S$ agents, thus providing a conditional asymptotic validation of the mean-field equilibrium price. Unlike Theorems \ref{thm: existence of solutions} and \ref{thm: existence of unlifted mean-field price process}, however, the next theorem does not provide an existence result for equilibria satisfying \eqref{eqn: project eq price v1}.
The proof only uses the existence of a weak unlifted mean-field equilibrium satisfying Definition \ref{def: unlifted mf price process} with the modified consistency condition \eqref{eqn: project eq price v1}. 
The proof of the next theorem does not use Assumption \ref{assumption: affine target functions} directly; it only uses the existence of an unlifted weak equilibrium satisfying \eqref{eqn: project eq price v1}.
}

\begin{theorem}
\label{thm: weak mk clearing condition}
{Under Assumption \ref{assumption: coefficients}, suppose that there exists a weak unlifted mean-field equilibrium in the sense of Definition \ref{def: unlifted mf price process}, with condition \eqref{eqn: conclusion winf projected} replaced by \eqref{eqn: project eq price v1}, and let $\varpi^{mf}$ be the associated price process. 
Then, for the weak $N$-agent economy constructed above,} 
 there exists a constant $C>0$ such that 
\begin{equation}
\label{eqn: asymptotic market clearing}
\bE_N\bblq \int_0^T \Bigg|\frac{1}{N_I+N_S}\sum_{p = I,S} \sum_{j=1}^{N_p}\hat{\alpha}^{\text{mf};p,j}_{t}\Bigg|^2 dt \bbrq\leq \frac{C}{N_I + N_S},
\end{equation}
where $\hat{\alpha}^{\text{mf};p,j}$ is given by \eqref{eqn: optimal control N agent} 
and $\bar{\E}_N$ denotes the expectation with respect to the measure $\bar{\mathbb{P}}_N$.
\end{theorem}
\begin{proof}
First of all, we notice that, by \eqref{eqn: equation wmf weak mc},
\begin{align}
\label{eqn: N equilibrium market clearing}
\frac{1}{N}\sum_{p = I,S} \sum_{j=1}^{N_p}\hat{\alpha}^{\text{mf};p,j}_{t}		
&= \sum_{p = I,S}\frac{n_p}{N_p} \sum_{j=1}^{N_p}\hat{\alpha}^{\text{mf};p,j}_{t}	\\
&= \sum_{p = I,S}\frac{n_p}{N_p} \sum_{j = 1}^{N_p}-\bar{\Lambda}^p(Y^{p,j}_t + \wmf_t)\nonumber\\
&= \sum_{p = I,S} -n_p\bar{\Lambda}^p \bbl \frac{1}{N_p}\sum_{j = 1}^{N_p} Y^{p,j}_t \bbr+\bar{\E}\bigl[n_I \bar{\Lambda}^IY^I_t \mid \ItF^{\varpi^{\rm mf},b,c}_t\bigr] + \bar{\E} \bigl[n_S\bar{\Lambda}^SY^S_t\mid\mffil_t\bigr] \nonumber\\ 
&= \sum_{p = I,S}- n_p\bar{\Lambda}^p \bbl \frac{1}{N_p}\sum_{j = 1}^N Y^{p,j}_t - \bar{\E}\bigl[Y^p_t \mid\ItF^{b,\chi^{p,\wmf}}_t\bigr]\bbr.\nonumber
\end{align}
Applying the Yamada-Watanabe theorem (\cite[Theorem 1.33]{carmona2018probabilistic2}) to the FBSDEs \eqref{eqn: optimal control N agent} defined on $(\bOm_N,\bar{\ItF}_N,\bar{\mathbb{P}}_N, \bF^{p,j})$, for $p = I,S$, we obtain the existence of two measurable functions $\Psi^I$ and $\Psi^S$, introduced in \eqref{eqn: YW functions}, such that 
\begin{equation}
\label{eqn: YW dynamics}
(X^{p,j}_t,Y^{p,j}_t, Z^{p,0,j},Z^{p,j},M^{p,j})_{t\in[0,T]}:= \Psi^p(\eta^{p,j}, b,\chi^{p,\wmf},w^{p,j}),\quad j = 1,\dots,N_p.
\end{equation}
Conditionally on the common environment $\F^{b,\chi^{p,\wmf}}$, the processes $(Y^{p,j})_{j = 1,\dots,N_p}$ are i.i.d., since by \eqref{eqn: YW dynamics} each $Y^{p,j}$ is a measurable functional of the common variables $(b,\chi^{p,\varpi^{\rm mf}})$ and of the idiosyncratic pair $(\eta^{p,j},w^{p,j})$. 
Hence, for $p = I,S$,
\begin{align*}
\bar{\E}_N\Big[Y^{p,j}_t\Bigm|\ItF^{b,\chi^{p,\wmf}}_t\Big] &= \bar{\E}_N\Big[Y^{p,1}_t\Bigm|\ItF^{b,\chi^{p,\wmf}}_t\Big]
= \bar{\E}\Big[Y^{p}_t\Bigm|\ItF^{b,\chi^{p,\wmf}}_t\Big]
,\quad \forall t\in[0,T],\quad \forall j=1,\dots,N_p.
\end{align*}
We can then replace $\bar{\E}[Y^{p,j}_t\mid\ItF^{b,\chi^{p,\wmf}}_t]$ in the last term of \eqref{eqn: N equilibrium market clearing}, obtaining
\begin{displaymath}
\frac{1}{N}\sum_{p = I,S} \sum_{j=1}^{N_p}\hat{\alpha}^{\text{mf};p,j}_{t}=-\sum_{p = I,S} n_p\bar{\Lambda}^p\bbl \frac{1}{N_p} \sum_{j = 1}^{N_p} Y^{p,j}_t - \bar{\E}_N \bigl[ Y^{p,1}_t \mid\ItF^{b,\chi^{p,\wmf}}_t\bigr]\bbr.
\end{displaymath}
We notice that the function $F^p:\bOm^0\times \bOm^{p,1}\times \dots \times\bOm^{p,N_p}\rightarrow \R$ defined by
\begin{displaymath}
F^p(t,(\omega^0,\omega^{p,1}\dots,\omega^{p,N_p})):=\bII\frac{1}{N_p}\sum_{j=1}^{N_p} Y^{p,j}_t(\omega^0,\omega^{p,j}) - \bar{\E}_N\bigl[Y^{p,1}_t\mid\ItF^{b,\chi^{p,\wmf}}_t\bigr](\omega^0)\bII^2
\end{displaymath}
is measurable (recall that $Y^{p,j}$ is progressively measurable, see \cite[Remark 1.34]{carmona2018probabilistic2}). By Lemma \ref{lemma: boundedness fourth moment sequence of processes}, Fubini's theorem, Cauchy-Schwarz and Jensen's inequalities, together with conditional independence, we get
\begin{small}
\begin{align*}
\bar{\E}_N &\bblq  \int_0^T F^p(t,(\omega^0,\omega^{p,1}\dots,\omega^{p,N_p}))dt\bbrq = \int_0^T\frac{1}{N_p^2}\bar{\E}_N\bblq\bI\sum_{j=1}^{N_p}\bl Y^{p,j}_t - \bar{\E}_N[Y^{p,1}_t\mid\ItF^{b,\chi^{p,\wmf}}_t]\br\bI^2\bbrq dt\\ 
&= \int_0^T\frac{1}{N_p^2}\Bigg(\bar{\E}_N\bblq\sum_{j=1}^{N_p}\bI Y^{p,j}_t - \bar{\E}_N[Y^{p,j}_t\mid\ItF^{b,\chi^{p,\wmf}}_t]\bI ^2\bbrq\\
    &\quad+ 2 \bar{\E}_N\bblq\sum_{h,k=1, h\neq k}^{N_p}\bl Y^{p,h}_t- \bar{\E}_N[Y^{p,h}_t\mid\ItF^{b,\chi^{p,\wmf}}_t]\br\bl Y^{p,k}_t  - \bar{\E}_N[Y^{p,k}_t\mid\ItF^{b,\chi^{p,\wmf}}_t]\br\bbrq \Bigg)dt\\
	&= \int_0^T\frac{1}{N_p^2}\bar{\E}_N\bblq\sum_{j=1}^{N_p}\bI Y^{p,j}_t - \bar{\E}_N[Y^{p,j}_t\mid\ItF^{b,\chi^{p,\wmf}}_t]\bI ^2\bbrq dt\\
    &\leq\frac{4}{N_p^2} \sum_{j=1}^{N_p}  \int_0^T \bar{\E}_N\bigl[\big| Y^{p,j}_t\big|^2\bigr] dt\leq\frac{4T}{N_p} C_B^2. 
\end{align*}
\end{small}
Finally, we have that
\begin{align*}
\bar{\E}_N\bblq \int_0^T\Bigg| \frac{1}{N_I +N_S} \sum_{p = I,S}\sum_{j=1}^{N_p}\hat{\alpha}^{\text{mf};p,j}_{t}\Bigg|^2 dt\bbrq 	&\leq 2\sum_{p = I,S}(n_p\bar{\Lambda}^p)^2\bar{\E}_N\bblq\int_0^T F^p(t,(\omega^0,\omega^{p,1}\dots,\omega^{p,N_p}))dt\bbrq	\\
&\leq\frac{8TC^2_B\sum_{p = I,S}n_p (\bar{\Lambda}^p)^2}{N_I + N_S}. 
\qedhere
\end{align*}
\end{proof}

\begin{remark}[On the equilibrium consistency condition]
\label{rmk: extra assumption for asymptotic mk clearing} 
{As discussed at the beginning of this section, we replace the consistency condition \eqref{eqn: conclusion winf projected} in the definition of a weak mean-field equilibrium (Definition \ref{def: unlifted mf price process}) by the more restrictive condition \eqref{eqn: project eq price v1}.
The reason is probabilistic: the proof of Theorem \ref{thm: weak mk clearing condition} relies on the conditional i.i.d. property of the family $(Y^{p,j})_{j = 1,\dots,N_p}$ with respect to $\F^{b,\wmf,c}$, for $p = I,S$. Condition \eqref{eqn: project eq price v1} is designed precisely to guarantee this property, which is used in the proof of Theorem \ref{thm: weak mk clearing condition} to handle the term $\frac{1}{N_I}\sum_{j = 1}^{N_I} Y^{I,j}_t-\E[Y^{I,1}_t \mid \ItF^{b,\wmf,c}_t]$.}

{If one instead works with the original consistency condition \eqref{eqn: conclusion winf projected}, then the proof would require replacing $\E[Y_t^{I,1}\mid \ItF_t^{b,\wmf,c}]$ by $\E[Y_t^{I,1}\mid \ItF_t^{b,\wmf}]$. The difficulty is that, in general, the family $(Y^{I,j})_{j=1,\ldots,N_I}$ is not conditionally i.i.d. with respect to $\mathbb F^{b,\wmf}$. 
Nevertheless, suppose that there exists a price process $\wmf$ satisfying \eqref{eqn: conclusion winf projected} and consider the corresponding finite-player economy. In analogy with Remark \ref{rmk: measurability of empirical mean}, the finite-player market clearing relation suggests that the the empirical average of the informed adjoint processes $(\frac1{N_I}\sum_{j=1}^{N_I}Y_t^{I,j})_{t\in[0,T]}$ should be adapted to $\mathbb F^{b,\wmf}$. In this case, one would formally obtain}
\begin{equation*}
    \E\bigg[\frac{1}{N_I}\sum_{j = 1}^{N_I} Y^{I,j}_t\biggm| \ItF^{b,\wmf,c}_t\bigg] =  \E\bigg[\frac{1}{N_I}\sum_{j = 1}^{N_I} Y^{I,j}_t\biggm| \ItF^{b,\wmf}_t\bigg] = \E[Y^{I,1}_t\bigm| \ItF^{b,\wmf}_t], \quad \forall t\in[0,T], \quad \prob-a.s.
\end{equation*}
{We stress, however, that this is only an a posteriori heuristic observation and cannot be used in an existence proof. Its role is only to motivate the additional structural assumptions introduced below, under which the proof of Theorem \ref{thm: weak mk clearing condition} can be adapted to the case of \eqref{eqn: conclusion winf projected}.}
{Using the representation of the adjoint process under Assumption \ref{assumption: affine target functions}}, we then deduce that 
\begin{equation}
\label{eqn: condition on cost functions}
\E\bigg[ \bar{g}^p(\chi^{p,\wmf}_T) + \int_t^T c^p(s, \chi^{p,\wmf}_s)ds \biggm| \ItF^{b,\wmf,c}_t\bigg] = \E\bigg[ \bar{g}^p(\chi^{p,\wmf}_T) + \int_t^T c^p(s, \chi^{p,\wmf}_s)ds \biggm| \ItF^{b,\wmf}_t\bigg].
\end{equation}
For $t = T$, equation \eqref{eqn: condition on cost functions} implies that $\bar{g}^I(\wmf_T,c_T)$ is a measurable function of $(b,\wmf)$. As a consequence, it is natural to assume that 
\begin{equation}
\label{eqn:str_ass_g}
\bar{g}^I(\varpi^{\rm mf}_T, c_T)
=\tilde g^I(\varpi^{\rm mf}_T, b_T) 
\end{equation}
for a suitable continuous function $\tilde{g}^I:\R^2 \to \R$. Reasoning analogously, one may suppose that there exists a continuous function $\tilde{c}^I:[0,T]\times \R^2 \to \R$ such that 
\begin{equation}
\label{eqn:str_ass_c}
c^I(s,\wmf_s, c_s) = \tilde{c}^I(s,\wmf_s, b_s),
\qquad\text{ for all }s \in[0,T].
\end{equation} 
Under these additional structural conditions, the optimal control problem of every informed agent depends only on the equilibrium price process and the common noise. Hence, in \eqref{eqn: YW dynamics}, the adjoint processes $(Y^{I,j})_{j = 1}^{N_I}$ are measurable functions of $(\eta^{I,j}, b, \wmf, w^{I,j})$, hence they are conditionally i.i.d. with respect to $\F^{b,\wmf}$. Therefore, under the additional structural conditions \eqref{eqn:str_ass_g}-\eqref{eqn:str_ass_c}, Theorem \ref{thm: weak mk clearing condition} can be proved analogously also in the case of the original consistency condition \eqref{eqn: conclusion winf projected}. We refer to \S \ref{section: one informed agent} for a single-informed-agent example in which such a structural restriction is imposed.
\end{remark}

\begin{remark}
\label{rem: 6.3}
 Let us further comment on why we do not provide existence of equilibria satisfying a consistency condition of the form \eqref{eqn: project eq price v1}, {which is the condition suggested by the formal limit of the finite-player market clearing relation}. The main challenge lies in the presence of two different filtrations in the terms defining the equilibrium. In particular, the discretization procedure outlined in \S \ref{section: existence of solutions} does not seem to be directly applicable in this setting. To employ a similar approach, one would need to construct an input-output map $\bPhi$, analogous to \eqref{eqn: prod input-output}, but defined by the conditional expectations of the adjoint processes of the representative agents of the two subpopulations with respect to a discretization of $(B,C)$. 
The discretization of $C$ introduces a dependence of the input-output map inside the conditional expectation of the representative standard agent, and this makes it difficult to prove continuity of $\bPhi$.
{At present, our discretization method does not seem to extend to that setting and establishing existence for this modified equilibrium notion remains an open problem within the present approach.}

\end{remark}

\section{A special case: a single informed agent} 
\label{section: one informed agent}
    
In this section, we present an example  in which there is a single informed agent and a population of symmetric standard agents. We consider a family of $N_S$ standard agents and single informed agent ($N_I = 1$), who can access an additional stochastic process $C$, given by a Brownian motion correlated with the common noise, i.e., $C_t= \rho^2 B_t + \sqrt{1-\rho^2} B^2_t$, which satisfies the properties discussed in Remark \ref{rmk: compatibility example}. We suppose that Assumption \ref{assumption: coefficients} and Assumption \ref{assumption: affine target functions} hold. In addition, we suppose that the initial values of the state variables are deterministic $\xi^I$, $\xi^{S,j}\in\R$. As a consequence, the filtrations to which the controls of the agents are adapted are given by $\F^I := \F^{B, \w,C}$ and $\F^{S,j} := \F^{B,W^{S,j},\w}$, for every $j = 1,\dots,N_S$. The optimal control problem of the informed agent is specified as follows:
\begin{equation}
\label{eqn: major ocp}
\inf_{\beta \in \mathbb{H}^2(\mathbb{F}^{I})} J^{I}(\beta), \qquad 
J^{I}(\beta) := \E \bblq \int_0^T f^I_{N_S}(s, X_s, \w_s, \beta_s,C_s) ds + g^I(X_T, \w_T, C_T) \bbrq,
\end{equation}
subject to
\begin{equation*}
\begin{cases} 
dX^I_t = (\beta_t + l^I(t,\w_t))dt + \sigma^{I,0}(t,\w_t) dB_t,\\
X^I_0 = \xi^I,
\end{cases}
\end{equation*}
where 
\begin{align*} 
f^I_{N_S} ( s,x,\beta, \w,c) 	&:= \w\beta + \frac{1}{2} \frac{\Lambda^I}{N_S}\beta^2 + x c^I(t,\w,c),\\
g^I (x,\w,c) 			&= x \bar{g}^I(\w,c).
\end{align*}
In analogy to \cite[Remark 3.11]{fujii2022equilibrium}, we notice that for the analysis with a fixed $N_S$, the scaling term appearing in $f^I_{N_S}$ is arbitrary and irrelevant. On the other hand, this factor is fundamental to study the large population limit $N_S\to\infty$. Indeed, the denominator which appears in the quadratic term of the cost function $f^I_{N_S}$ is necessary to guarantee that the impact of the informed agent does not become negligible in the mean-field limit. In addition, note that there is no idiosyncratic term in the dynamics of the informed agent. 

\begin{remark}
We highlight that the framework described in this section differs from that of \cite{fujii2022equilibrium} in the following key aspects. 
{First, the major agent is a price taker at the level of her individual optimization problem and does not solve a strategic price-impact problem. As in the rest of the paper, her control problem is formulated for a given price process $\varpi$, which is treated as an exogenous random environment at the optimization stage. Nevertheless, because her contribution enters the market clearing condition with non-vanishing weight, her strategy affects the equilibrium price, as made clear by \eqref{eqn: mc price process N+1} below. Second, the major agent has access to private information, whereas in \cite{fujii2022equilibrium} she is assumed to observe only the common noise.}
\end{remark} 

The stochastic maximum principle implies that the optimal controls are of the form:
\begin{align*}
\hat\alpha^{S}(y,\w):= -\bar\Lambda^S(y +\w),\qquad
\hat\beta(y,\w):= - \bar\Lambda^IN_S (y+ \w).
\end{align*}
As discussed in \S \ref{section: FBSDE from SMP}, the optimal control problem of the standard agents is solved by the FBSDE system in \eqref{eqn: FBSDE:N} for $p = S$, while the solution of the optimal control problem of the informed agent is characterized by 
\begin{equation}
\label{eqn: FBSDE major}
\begin{cases}
dX^{I}_t = (-\bar{\Lambda}^I(Y^{I}_t+\w_t) + l^I(t,{\w}_t))dt + \sigma^{I,0}(t,{\w}_t)dB_t,\quad &X^{I}_0 = \xi^I,\\ 
dY^{I}_t = -c^I(t,\w_t, C_t)dt + Z^{I,0}_t dB_t+ dM^{I}_t,\quad &Y^{I}_T= \bar{g}^I(\w_T,C_T).
\end{cases}
\end{equation}
The market clearing condition \eqref{eqn: market clearing N player}, in the case of a large population of standard agents, is given by $\frac{1}{N_S+1}(\sum_{j = 1}^{N_S} \hat\alpha^{S,j}_t + \hat\beta_t) = 0$, where $\hat\alpha^{S,j}_t := \hat\alpha^S(Y^{S,j}_t, \w_t)$ and $\hat\beta_t = \hat\beta(Y^{I}_t ,\w_t)$. This condition, when applied to the candidate optimal control, provides an equation for the equilibrium price process: 
\begin{equation}
\label{eqn: mc price process N+1}
\w_t = -(\bar\Lambda^I + \bar\Lambda^S)^{-1}\bbl\frac{1}{N_S}\bar\Lambda^S\sum_{j = 1}^{N_S} Y^{S,j}_t+\bar\Lambda^I Y^{I}_t\bbr,\quad t\in[0,T].
\end{equation}

{In this section, we do not prove the existence of a price process solving \eqref{eqn: mc price process N+1}. Similarly to the setup considered in \S \ref{section: mcc}, also condition \eqref{eqn: mc price process N+1} does not appear to be solvable due to the highly recursive structure of the problem. 
However, in analogy to Remark \ref{rmk: extra assumption for asymptotic mk clearing}, if there exists a process $\varpi$ satisfying \eqref{eqn: mc price process N+1}, then the latter condition implies that $Y^{I}_t$ must be $\ItF^{B,\w}_t$- measurable.} 
This means that the optimal control $\hat\beta_t$ of the informed agent is fully determined by $(B,\w)$, where $\w$ solves \eqref{eqn: mc price process N+1}. 
As a consequence, thanks to the linearity in the $x$-variable of the target functions ensured by Assumption \ref{assumption: affine target functions} together with the fact that $Y^{I}_T$ is $\ItF^{\w,B}_T$-measurable, it holds that $Y^{I}_T = \tilde{g}^I(\w,B)$, for a suitable measurable function $\tilde{g}^I$ defined on $\mathcal{D}([0,T],\R)\times \mathcal{C}([0,T],\R)$. Reasoning analogously for the coefficient $c^I(t,\w,c)$, {this motivates the structural restriction introduced below, which is necessary to apply the methodology developed in \S \ref{section: existence of solutions}}.

\begin{assumption}
\label{assumption: coefficients major agent by mcc}
{There exist bounded and continuous functions
\[
\tilde{c}^I: [0,T] \times \R^2, \qquad
\tilde{g}^I : \R^2\to \R,
\] 
such that, for every $t\in[0,T]$ and $\varpi\in\R$, it holds $\mathbb{P}$-a.s. that
\begin{align*}
   c^I(t,\varpi,C_t) &= \tilde{c}^I(t,\varpi,B_t), \\
   \bar{g}^I(\w,C_T) &= \tilde{g}^I(\w,B_T).
\end{align*}
Moreover, the standard agents' cost functions satisfy the linear structure of Assumption \ref{assumption: affine target functions}. 
}
\end{assumption}

\begin{remark}
We emphasize that the influence of the private signal $C$ does not disappear, even though $C$ no longer appears explicitly in the coefficients after Assumption \ref{assumption: coefficients major agent by mcc}. The informed strategy $\hat\beta$ may still depend on $C$ through the equilibrium price process $\varpi$, which, contrary to the standard assumption in the literature (see e.g. \cite{fujii2022mean}), need not be adapted to the filtration generated by $B$ alone.
More precisely, the gap between the filtrations generated by $B$ and by $(\w, B)$ captures the influence of the unobserved signal $C$ on the equilibrium dynamics.
\end{remark}

Similarly to \S \ref{section: mcc}, it is convenient  to study the mean-field limit of the equilibrium price equation \eqref{eqn: mc price process N+1}. To this end, we let $N_S\to \infty$ and consider the optimal control problem of a unique typical standard agent and the optimal control problem of the informed agent, defined as follows: 
\begin{align}
\label{eqn: major ocp mf}
&\inf_{\beta \in \mathbb{H}^2(\mathbb{F}^{\w,B})} J^{I}(\beta),\qquad J^{I}(\beta) := \E \bblq \int_0^T \tilde{f}^I(s, X_s, \beta_s, \w_s, B_s) ds + X_T\,\tilde{g}^I(\w_T,B_T) \bbrq,
\end{align}
subject to 
\begin{equation*}
\begin{cases} 
dX^I_t = (\beta_t + l^I(t,\w_t))dt + \sigma^{I,0}(t,\w_t) dB_t,\\
X^I_0 = \xi^I,
\end{cases}
\end{equation*}
where $\tilde{f}^I (t,x,\beta, \w, b) := \w\beta + \frac{1}{2} \Lambda^I\beta^2 + 	x\tilde{c}^I(t,\w,b)$. 
The candidate optimal control of the informed agent is given by $\hat\beta_t := -\bar\Lambda^I(Y^I_t+\w_t)$, where $Y^I_t$ solves the backward component of the FBSDE associated with the stochastic maximum principle applied to \eqref{eqn: major ocp mf}: 
\begin{equation}
\label{eqn: FBSDE major v2}
\begin{cases}
dX^{I}_t = (-\bar{\Lambda}^I(Y^{I}_t+\w_t) + l^I(t,{\w}_t))dt + \sigma^{I,0}(t,{\w}_t)dB_t,\quad &X^{I}_0 = \xi^I,\\ 
dY^{I}_t 	= -\tilde{c}^I(t,\w_t,B_t)  dt + Z^{I}_t dB_t + dM^{I}_t, \quad &Y^{I}_T 	=\tilde{g}^I(\w_T,B_T).
\end{cases}
\end{equation}
$M^{I}$ is a martingale adapted to $\F^{\w,B}$. By a reasoning analogous to that of \S \ref{section: definition of solution} and recalling Assumption \ref{assumption: affine target functions}, we can define a weak (unlifted) mean-field equilibrium in analogy to Definition \ref{def: unlifted mf price process}. We recall that we are considering here deterministic initial conditions $\xi^I, \xi^S$. 

\begin{defin}[Mean-field equilibrium]
\label{def: unlifted mf price process example}
We say that 
\begin{equation*}
\tilde\Theta := (\Omega, \ItF, \prob, \F,(B,W^S), \w)
\end{equation*}
is a \emph{weak mean-field equilibrium} {for the single-informed-agent model} if
\begin{itemize}
\item $((\Omega, \ItF, \prob), \tilde\F^S,(\xi^S, (B,W^S), \w))$ and $((\Omega, \ItF, \prob), \tilde\F^I,(\xi^I, B, \w))$ are admissible probabilistic setups in the sense of Definition \ref{def: admissibility}, 
where $\tilde\F^S := \F^{ B,W^S, \w}$ and $\tilde\F^I = \F^{B,\w}$;
\item the process $\w$ solves the equation
\begin{equation}
\label{eqn: eq price process mf 1 major agent}
\w_t = -(\bar\Lambda^I+\bar\Lambda^S)^{-1}\bl\E[\bar\Lambda^S\tilde{Y}^{S}_t \mid \ItF^{B,\w}_t] + \bar\Lambda^I\tilde{Y}^I_t\br,\quad \qquad \forall t\in[0,T], \quad \mathbb{P}-a.s.,
\end{equation}
where $\tilde{Y}^{p}$ is the solution of the backward component of 
the FBSDEs
for $p = S,I$.
\end{itemize}
\end{defin}

{
We can then state as follows the main existence result in the setup of this section. 
We stress that the theorem below concerns the mean-field equilibrium equation \eqref{eqn: eq price process mf 1 major agent}, not the finite-player equilibrium price equation \eqref{eqn: mc price process N+1}. This result should therefore be viewed as a benchmark mean-field analysis for a single informed agent, conditionally on the validity of Assumption \ref{assumption: coefficients major agent by mcc}.
\begin{theorem}
    Under Assumption \ref{assumption: coefficients major agent by mcc}, there exists a mean-field equilibrium for the single-informed-agent model in the sense of Definition \ref{def: unlifted mf price process example}.  
\end{theorem}
\begin{proof} 
Following the arguments of \S \ref{section: existence of solutions} we can prove that there exists a stochastic process defined on a suitable filtered probability space $\setup$ which satisfies 
\begin{equation}
\label{eqn:price_major}
\w_t = -(\bar\Lambda^I+\bar\Lambda^S)^{-1}\E[\bar{\Lambda}^S\tilde{Y}^{S}_t +\bar{\Lambda}^I\tilde{Y}^I_t\mid \ItF^{B,\w}_t], \qquad \forall t\in[0,T], \quad \mathbb{P}-a.s.
\end{equation}
We recall that, by Assumption \ref{assumption: coefficients major agent by mcc}, we can consider the adjoint process satisfying the following conditions: $\tilde{Y}^I_t$ is $\ItF^{B,\w}_t$-measurable and $\tilde{Y}^S_t$ is $\ItF^{B,\w,W^S}_t$-measurable. 
Therefore, equations \eqref{eqn: eq price process mf 1 major agent} and \eqref{eqn:price_major} are equivalent and we can  conclude that there exists a mean-field equilibrium $(\Omega, \ItF, \prob, \F,(B,W^S), \w)$ as in Definition \ref{def: unlifted mf price process example}. 
\end{proof} 
}

{In the economy considered in this section, the informed agent’s informational advantage becomes revealed through the equilibrium price. 
In this special setting, the mean-field equilibrium price process ${\w}$ solving \eqref{eqn: eq price process mf 1 major agent} is sufficiently informative that the informed agent’s optimal strategy is measurable with respect to the public filtration $\mathbb F^{B,\varpi}$ and, hence, can be identified from public information.
Indeed, for every $t\in[0,T]$, $\mathbb{P}$-a.s.\begin{displaymath}
\begin{split}
\hat{\beta}_t 	&= -\bar{\Lambda}^I(\tilde{Y}^I_t + {\w}_t)= (\bar{\Lambda}^S+ \bar{\Lambda}^I){\w}_t + \E[\bar{\Lambda}^S\tilde{Y}^S_t \mid \mathcal{F}^{{B},{\w}}_t] - \bar{\Lambda}^I{\w}_t =\bar{\Lambda}^S({\w}_t + \E[\tilde{Y}^S_t \mid \mathcal{F}^{{B},{\w}}_t]),
\end{split}
\end{displaymath}
showing that $\hat\beta_t$ is $\mathcal F_t^{B,\w}$-measurable. Therefore, the private signal still affects the equilibrium, but in this particular example its effect is fully transmitted through the equilibrium price  in such a way that the informed agent’s optimal strategy becomes identifiable from public information.}

\appendix

\section{Proof of Proposition \ref{lemma: iter random env}} 
\label{appendix: proof iter random env}
The well-posedness result follows the proof of \cite[Theorem 1.60]{carmona2018probabilistic2}. It first shows well-posedness of FBSDE \eqref{eqn: FBSDE} for short time horizon and then, in order to establish well-posedness for any time horizon, a key step is to establish Lipschitz continuity of the decoupling field. We thus prove here point (iii) of the statement, in particular we provide an explicit bound for the Lipschitz constant in condition \eqref{eqn: iterrandom}. As in that result, this is mainly due to the convexity of the cost coefficients.  

First, we need a preliminary stability estimate for two solutions of system \eqref{eqn: FBSDE}.

\begin{lemma}
\label{lemma: stability conditions result}
Let us consider two solutions of \eqref{eqn: FBSDE}, denoted by $(X^{p,1},Y^{p,1},Z^{p,0,1},Z^{p,1},M^{p,1})$ and $(X^{p,2},Y^{p,2},Z^{p,0,2},Z^{p,2},M^{p,2})$, defined by two different compatible processes $(\xi^{p,1}, B,W^p,\chi^{p,\w^1})$ and $(\xi^{p,2},B,W^p,\chi^{p,\w^2})$. Then, there exists a constant $C(T,L)>0$ (depending on $L$ introduced in Assumption \ref{assumption: coefficients} and on $T$) and a constant $C(T,\Lambda)>0$ (depending on $\Lambda^p$ and $T$) such that:
\begin{align}
\label{eqn: estimation stability}
&\E\bblq\sup_{t\in[0,T]}|Y^{p,1}_t - Y^{p,2}_t|^2 \biggm|\ItF^p_0\bbrq \\ 
&\qquad\leq C(T,L) \E\bblq \sup_{t\in[0,T]} |X^{p,1}_t - X^{p,2}_t|^2 +  |\partial_xg^p(X^{p,1}_T, \chi^{p,\w^1}_T) - \partial_x g^p(X^{p,1}_T,\chi^{p,\w^2}_T)|^2\nonumber \\ 
&\qquad\quad+ \int_0^T |\partial_x\bar{f}^p(t,X^{p,1}_t,\chi^{p,\w^1}_t) - \partial_x\bar{f}^p(t,X^{p,1}_t, \chi^{p,\w^2}_t) |^2dt  \biggm| \ItF^p_0\bbrq, \nonumber\\
\label{eqn: estimation stability X}
&\E\bblq \sup_{t\in[0,T]} |X^{p,1}_t - X^{p,2}_t|^2\biggm| \ItF^p_0\bbrq\\
&\qquad \leq C(T,\Lambda)\Bigg(|\xi^{p,1} - \xi^{p,2} |^2 + \E\bblq  \sup_{t\in[0,T]} |Y^{p,1}_t - Y^{p,2}_t|^2 +\int_0^T \Big(
|\w^1_t - \w^2_t|^2
\nonumber\\
&\qquad\quad
+|l^p(t,\w^1_t) - l^p(t,\w^2_t)|^2
+ |\sigma^{p,0}(t,\w^1_t) -\sigma^{p,0}(t,\w^2_t)|^2 + |\sigma^{p}(t,\w^1_t) -\sigma^{p}(t,\w^2_t)|^2\Big) dt \biggm| \ItF^p_0 \bbrq\Bigg)\nonumber.
\end{align}
\begin{proof}
We recall that, by \cite[Theorem 1.60]{carmona2018probabilistic2}, the stochastic integrals appearing in the process $Y^{p,j}_t$ and the process $M^p$ are true martingales. By integration by parts, we have that
\begin{align*}
&\bI Y^{p,1}_t - Y^{p,2}_t\bI^2	\\
&\qquad= \bI\partial_xg^p(X^{p,1}_T,\chi^{p,\w^1}_T) -\partial_xg^p( X^{p,2}_T,\chi^{p,\w^1}_T)\bI^2 \\
&\qquad\quad- 2\Bigg(\int_t^T(Y^{p,1}_s - Y^{p,1}_s)(-\partial_x \bar{f}^p(s,X^{p,1}_s,\w^{p,2}_s) + \partial_x\bar{f}^p(s,X^{p,2}_s,\w^{p,2}_s))ds \\ 
&\qquad\quad +\int_t^T(Y^{p,1}_s - Y^{p,2}_s)\big((Z^{p,0,1}_s - Z^{p,0,2}_s)dB_s + (Z^{p,1}_s - Z^{p,2}_s) dW^p_s+ dM^p_s\big) \Bigg) \\ 
&\qquad\quad - \int_t^T \big( |Z^{p,0,1}_s - Z^{p,0,2}_s |^2 + |Z^{p,1}_s - Z^{p,2}_s|^2\big)ds - [M^{p,1}- M^{p,2}]_T + [M^{p,1} - M^{p,2}]_t.
\end{align*}
We define $\gamma_t: =  \int_t^T\big( |Z^{p,0,1}_s - Z^{p,0,2}_s |^2 + |Z^{p,1}_s - Z^{p,2}_s|^2\big)ds + [M^{p,1}- M^{p,2}]_T-[M^{p,1}- M^{p,2}]_t$, for $t\in [0,T]$. We then take the conditional expectation with respect to $\ItF^p_0$:
\begin{align}
\label{eqn: cond expect square of difference}
&\E\big[ |Y^{p,1}_t - Y^{p,2}_t|^2 + \gamma_t \big|\ItF^p_0\big] \\		
\nonumber&\qquad\leq \E\bigg[ |\partial_xg^p(X^{p,1}_T, \chi^{p,\w^1}_T) - \partial_x g(X^{p,2}_T,\chi^{p,\w^2}_T) |^2 +  \int_t^T \bl |Y^{p,1}_s - Y^{p,2}_s|^2 + |\partial_x \bar{f}^p(s,X^{p,1}_s, \chi^{p,\w^1}_s) \\ 
\nonumber&\qquad \quad -\partial_x\bar{f}^p(s,X^{p,2}_s, \chi^{p,\w^2}_s)|^2\br ds \bigg|\ItF^p_0\bigg] \\ 
\nonumber&\qquad\leq \E\bigg[ |\partial_xg^p(X^{p,1}_T, \chi^{p,\w^1}_T) - \partial_x g^p(X^{p,2}_T,\chi^{p,\w^2}_T) |^2 +  \int_t^T |Y^{p,1}_s - Y^{p,2}_s|^2 ds +2 \int_t^T\Big(  |\partial_x \bar{f}^p(s,X^{p,1}_s, \chi^{p,\w^1}_s)\\ 
\nonumber&\qquad\qquad -\partial_x \bar{f}^p(s,X^{p,1}_s, \chi^{p,\w^2}_s)|^2 +L^2| X^{p,1}_s - X^{p,2}_s|^2 \Big) ds \bigg|\ItF^p_0\bigg].
\end{align}
We now multiply by $e^{\alpha t}$, for $\alpha>0$, and integrate over the interval $[0,T]$. Hence, \eqref{eqn: cond expect square of difference} becomes
\begin{align}
\label{eqn: estimate on Y and gamma}
&\E\bblq\int_0^Te^{\alpha t} |Y^{p,1}_t - Y^{p,2}_t|^2 dt +\int_0^Te^{\alpha t} \gamma_t dt \Biggm|\ItF^p_0\bbrq\\
\nonumber &\leq \E\bblq \bl\alphaexpinT\br |\partial_xg^p(X^{p,1}_T, \chi^{p,\w^1}_T) - \partial_x g^p(X^{p,2}_T,\chi^{p,\w^2}_T) |^2 +  \int_0^T \bl \alphaexpint \br|Y^{p,1}_t - Y^{p,2}_t|^2 dt \\ 
\nonumber&\qquad + 2 \int_0^T\bl \alphaexpint \br \Big( |\partial_x \bar{f}^p(t,X^{p,1}_t, \chi^{p,\w^1}_t)-\partial_x \bar{f}^p(t,X^{p,1}_t, \chi^{p,\w^2}_t)|^2 +L^2| X^{p,1}_t - X^{p,2}_t|^2 \Big) dt \Biggm| \ItF^p_0\bbrq.
\end{align}
For $\alpha = 1$, by Assumption \ref{assumption: coefficients} we have that
\begin{align}
\label{eqn: estimate integral deltaY}
&\E\bblq \int_0^T | Y^{p,1}_t - Y^{p,2}_t|^2 dt \Biggm|\ItF_0^p \bbrq 	\\
&\nonumber\qquad\leq \E\bblq  \int_0^T e^t | Y^{p,1}_t -	 Y^{p,2}_t|^2 dt + \int_0^T e^t\gamma_t dt \Biggm| \ItF^p_0\bbrq\\ 
\nonumber&\qquad\leq 2\E\bblq ( e^T-1) \Bigg( L^2|X^{p,1}_T - X^{p,2}_T|^2 + |\partial_xg^p(X^{p,1}_T,\chi^{p, \w^1}_T)  - \partial_x g^p(X^{p,1}_T,\chi^{p,\w^2}_T) |^2 \\ 
\nonumber&\qquad\quad+\int_0^T \Big( L^2|X^{p,1}_t - X^{p,2}_t|^2 + |\partial_x \bar{f}^p(t,X^{p,1}_t, \chi^{p,\w^1}_t)-\partial_x \bar{f}^p(t,X^{p,1}_t,\chi^{p, \w^2}_t)|^2\Big) dt\Bigg) \Biggm| \ItF^p_0\bbrq\\ 
\nonumber&\qquad\leq 2 (e^T\!-1) \E\bblq L^2(1+T)\sup_{t\in[0,T]}|X^{p,1}_t - X^{p,2}_t|^2+  |\partial_xg^p(X^{p,1}_T,\chi^{p, \w^1}_T)  - \partial_x g^p(X^{p,1}_T,\chi^{p,\w^2}_T) |^2\\
\nonumber&\qquad\quad + \int_0^T|\partial_x \bar{f}^p(t,X^{p,1}_t, \chi^{p,\w^1}_t)-\partial_x \bar{f}^p(t,X^{p,1}_t,\chi^{p, \w^2}_t)|^2 dt \Biggm| \ItF^p_0\bbrq.
\end{align}
We now apply estimate \eqref{eqn: estimate integral deltaY} to \eqref{eqn: cond expect square of difference} at $t=0$:
\begin{align}
\label{eqn: inequality gamma}
&\E\bigl[ \gamma_0  \big| \ItF^p_0 \bigr]	\\ 			
&\nonumber\qquad \leq \E\Big[ |Y^{p,1}_0 - Y^{p,2}_0|^2+ \gamma_0  \Big| \ItF^p_0\Big]\\ 
\nonumber	&\qquad
\leq \E\bigg[ |\partial_xg^p(X^{p,1}_T, \chi^{p,\w^1}_T) - \partial_x g^p(X^{p,2}_T,\chi^{p,\w^2}_T) |^2 +  \int_0^T |Y^{p,1}_s - Y^{p,2}_s|^2 ds \\ 
\nonumber	&\qquad\quad
+2 \int_0^T( |\partial_x \bar{f}^p(s,X^{p,1}_s, \chi^{p,\w^1}_s) -\partial_x \bar{f}^p(s,X^{p,1}_s, \chi^{p,\w^2}_s)|^2 +L^2| X^{p,1}_s - X^{p,2}_s|^2 )ds \bigg|\ItF^p_0\bigg]\\ 
\nonumber	&\qquad
\leq 2 e^T \E\bigg[ |\partial_xg^p(X^{p,1}_T, \chi^{p,\w^1}_T) - \partial_x g^p(X^{p,1}_T,\chi^{p,\w^2}_T) |^2 
+L^2(1 +T)\sup_{t\in[0,T]}|X^{p,1}_t - X^{p,2}_t|^2 \\
\nonumber	&\qquad\quad
+\int_0^T\Big|\partial_x \bar{f}^p(s,X^{p,1}_s, \chi^{p,\w^1}_s) -\partial_x \bar{f}^p(s,X^{p,1}_s, \chi^{p,\w^2}_s)\Big|^2ds  \bigg|\ItF^p_0\bigg].
\end{align}
We now apply Doob's conditional maximal inequality in the form 
\[
\E \Big[ \sup_{t\in [0,T]} |M_T|^2  \Big| \ItF^p_0\Big] 
\leq 4\E \Big[  |M_T|^2  \Big| \ItF^p_0\Big] 
= 4 \E \Big[  [M]_T  \Big| \ItF^p_0\Big],
\] 
for a càdlàg martingale $M$ with $M_0=0$. We then make use of the bound 
$\sup_{t\in [0,T]} |M_T-M_t|^2 \leq 2 |M_T|^2 + 2\sup_{t\in [0,T]} |M_t|^2$.
Using also 
the elementary inequality $(\sum_{i=1}^na_i)^2 \leq n\sum_{i=1}^n a^2_i$ to $(Y^{p,1}_t - Y^{p,2}_t)$, Jensen's inequality and It\^o isometry, we  obtain 
\begin{align}
\label{eqn: cs ineq Y}
&\E\bblq\sup_{t\in[0,T]}|Y^{p,1}_t - Y^{p,2}_t|^2\Biggm| \ItF^p_0\bbrq	\\
&\nonumber\qquad\leq 5\E\bblq\sup_{t\in[0,T]}\Bigg(|\partial_xg^p(X^{p,1}_T, \chi^{p,\w^1}_T) - \partial_x g^p(X^{p,2}_T,\chi^{p,\w^2}_T)|^2 \\
\nonumber&\qquad\quad
+ \Bigg| \int_t^T (\partial_x\bar{f}^p(s,X^{p,1}_s,\chi^{p,\w^1}_s)
- \partial_x\bar{f}^p(s,X^{p,2}_s,\chi^{p,\w^2}_s))ds\Bigg|^2 \\
& \nonumber\quad\qquad + \Bigg|  \int_t^T (Z^{p,0,1}_s - Z^{p,0,2}_s) db_s\Bigg|^2+ \Bigg|  \int_t^T (Z^{p,1}_s - Z^{p,2}_s) dw^p_s\Biggm|^2\\
& \nonumber\quad\qquad  +\big| M^{p,1}_T- M^{p,2}_T  - M^{p,1}_t+ M^{p,2}_t\big|^2 \bigg)\Biggm| \ItF^p_0\bbrq\\
\nonumber&\qquad\leq 10\E\bblq L^2(1+T^2) \sup_{t\in[0,T]} |X^{p,1}_t - X^{p,2}_t|^2 + |\partial_xg^p(X^{p,1}_T, \chi^{p,\w^1}_T)- \partial_x g^p(X^{p,1}_T,\chi^{p,\w^2}_T) |^2 \\
\nonumber&\qquad\quad+T\int_0^T  \big|\partial_x \bar{f}^p(s,X^{p,1}_s, \chi^{p,\w^1}_s)-\partial_x \bar{f}^p(s,X^{p,1}_s, \chi^{p,\w^2}_s)\big|^2ds  + \gamma_0 + 4\gamma_0 \Biggm| \ItF^p_0\bbrq \\
\nonumber&\qquad\leq 10 \E\bblq \big(L^2(1+T^2) + 10e^T L^2(1+T) \big) \sup_{t\in[0,T]} |X^{p,1}_t - X^{p,2}_t|^2 \\
\nonumber&\qquad\quad
+ (1+10 e^T)|\partial_xg^p(X^{p,1}_T, \chi^{p,\w^1}_T) - \partial_x g^p(X^{p,1}_T,\chi^{p,\w^2}_T) |^2 \\
\nonumber&\qquad\quad+(T+10e^T)\int_0^T  \big|\partial_x \bar{f}^p(s,X^{p,1}_s, \chi^{p,\w^1}_s)-\partial_x \bar{f}^p(s,X^{p,1}_s, \chi^{p,\w^2}_s)\big|^2ds  \Biggm| \ItF^p_0\bbrq, \\
\end{align}
which gives \eqref{eqn: estimation stability} defining $C(T,L) := 10\max\{ 
L^2(1+T^2) + 10e^T L^2(1+T), 1+10 e^T, T+10e^T 
\}$. 
Moreover, \eqref{eqn: estimation stability X} comes directly by the application of It\^o isometry and Doob's maximal inequality, with $C(T,\Lambda) :=6 \max\{4, |T\bar{\Lambda}^p|^2, T|\bar{\Lambda}^p|^2, T\}$.
\end{proof}
\end{lemma}

We now have all the ingredients to prove Proposition \ref{lemma: iter random env}. 

\begin{proof}[Proof of \eqref{eqn: iterrandom}]
We consider the two solutions introduced in Proposition \ref{lemma: iter random env}, which differ only for the initial value at $t$ of the state process.  The probabilistic setup is called  the $t$-initialized probabilistic setup (see \cite[Definition 1.39]{carmona2018probabilistic2}). By Assumption \ref{assumption: coefficients},
\begin{displaymath}
g^p(X^{p,1}_T,\chi^{p,\w}_T) - g^p(X^{p,2}_T,\chi^{p,\w}_T) \leq \partial_xg^p(X^{p,1}_T,\chi^{p,\w}_T)(X^{p,1}_T - X^{p,2}_T) = Y^{p,1}_T(X^{p,1}_T - X^{p,2}_T).
\end{displaymath}
We denote by $\hat\alpha^{p,l}$, for $l=1,2$, the candidate optimal control for the control problem defined on the $t$-initialized probabilistic setup, i.e., $\hat\alpha^{p,l}_t = -\bar\Lambda^p(Y^{p,l}_t+ \w_t)$. Hence, by integration by parts, and using that the process $X^{p,1} - X^{p,2}$ has bounded variation, we obtain that 
\begin{align*}
&\E[g^p(X^{p,1}_T,\chi^{p,\w}_T) - g^p(X^{p,2}_T,\chi^{p,\w}_T) \mid \ItF^p_t]\\
&\qquad\leq \E [Y^{p,1}_T(X^{p,1}_T - X^{p,2}_T)\mid\ItF^p_t]	\\ 
&\qquad = \E\bigg[ Y^{p,1}_t(X^{p,1}_t - X^{p,2}_t) + \int_t^T Y^{p,1}_s d(X^{p,1}_s - X^{p,2}_s) 
    + \int_t^T (X^{p,1}_s - X^{p,2}_s) dY^{p,1}_s  \biggm| \ItF^p_t \bigg]\\
&\qquad= Y^{p,1}_t(x_1-x_2) + \E\left[ \int_t^T Y^{p,1}_s(\hat{\alpha}_s^{p,1} - \hat\alpha^{p,2}_s) ds -\int_t^T (X^{p,1}_s - X^{p,2}_s) \partial_x\bar{f}^p(s,X^{p,1}_s, \chi^{p,\w}_s) ds \biggm| \ItF^p_t\right]\\
&\qquad\leq Y^{p,1}_t(x_1-x_2) + \E\bigg[ \int_t^T Y^{p,1}_s(\hat\alpha_s^{p,1} - \hat\alpha^{p,2}_s) ds +\int_t^T(\bar{f}^p(s,X^{p,2}_s, \chi^{p,\w}_s)- \bar{f}^p(s,X^{p,1}_s, \chi^{p,\w}_s)) ds \biggm| \ItF^p_t\bigg].
\end{align*}
Recalling that $Y^{p,l}_t = -(\Lambda^p\hat\alpha^{p,l}_t + \w_t) =-\partial_a{f}^p(t, X^{p,l}_t,\hat\alpha^{p,l}_t , \chi^{p,\w}_t)$, we have that
 \begin{align*}
& Y^{p,1}_t(x_2-x_1) 	\\
&\qquad\leq \E\bblq g^p(X^{p,2}_T , \chi^{p,\w}_T) - g^p(X^{p,1}_T , \chi^{p,\w}_T)  + \int_t^T\bigl( \bar{f}^p(s,X^{p,2}_s,\chi^{p, \w}_s)-\bar{f}^p(s,X^{p,1}_s,\chi^{p, \w}_s)\bigr) ds \Biggm| \ItF^p_t\bbrq \\
&\qquad\qquad +\E\bblq \int_t^T  \partial_af^p(s,X^{p,1}_s,\hat\alpha^{p,1}_s,\chi^{p,\w}_s)(\hat\alpha^{p,2}_s - \hat\alpha^{p,1}_s)ds \Biggm| \ItF^p_t\bbrq\\
&\qquad\leq \E\bblq g^p(X^{p,2}_T , \chi^{p,\w}_T) + \int_t^T {f}^p(s,X^{p,2}_s,\hat\alpha^{p,2}_s,\chi^{p,\w}_s) ds \bI \ItF^p_t\bbrq  \\ 
&\qquad\qquad -\E\bblq g^p(X^{p,1}_T , \chi^{p,\w}_T) + \int_t^T {f}^p(s,X^{p,1}_s, \hat\alpha^{p,1}_s,\chi^{p,\w}_s) ds \Biggm| \ItF^p_t\bbrq \\
&\qquad\qquad- \frac{1}{2} {\Lambda}^p\E\bblq \int_t^T|\hat\alpha^{p,2}_s - \hat\alpha^{p,1}_s|^2ds \Biggm| \ItF^p_t\bbrq,
\end{align*}
where the last equivalence holds because
\begin{align*}
&\frac{1}{2}\Lambda^p(\hat\alpha^{p,1}_s - \hat\alpha^{p,2}_s)^2\\
&\qquad=f^p(s,X^{p,1}_s,\hat\alpha^{p,2}_s,\chi^{p,\w}_s) - f^p(s,X^{p,1}_s,\hat\alpha^{p,1}_s,\chi^{p,\w}_s) - (\hat\alpha^{p,2}_s- \hat\alpha^{p,1}_s) \partial_af^p(s,X^{p,1}_s,\hat\alpha^{p,1}_s, \chi^{p,\w}_s).
\end{align*} 
Exchanging the role of $x_1$ and $x_2$, we obtain that
\begin{equation}
\label{eqn: iterrandom ineq 5}
 -(Y^{p,2}_t-Y^{p,1}_t)(x_2-x_1) 	\leq -{\Lambda}^p \E\bblq\int_t^T|\hat\alpha^{p,1}_s - \hat\alpha^{p,2}_s|^2 ds\bI\ItF^p_t\bbrq.
\end{equation}
We now observe that, by Jensen's inequality,
\begin{align*}
\E\bblq\sup_s|X^{p,1}_s - X^{p,2}_s|^2\Biggm|\ItF^p_t\bbrq 	
&\leq 2\E\bblq |x_1-x_2|^2 + \sup_{s\in[t,T]} \Biggm|\int_t^s(\hat\alpha^{p,1}_u - \hat\alpha^{p,2}_u)du \Biggm|^2\Biggm| \ItF^p_t\bbrq\\ 
&\leq 2\max\{1,T\} \Bigg( |x_1-x_2|^2 + \E\bblq \int_t^T |\hat\alpha^{p,1}_s - \hat\alpha^{p,2}_s|^2 ds \Biggm| \ItF^p_t\bbrq\Bigg)
\end{align*} 
In conclusion, by Lemma \ref{lemma: stability conditions result} and \eqref{eqn: iterrandom ineq 5}, we have that, $\prob$-a.s.
\begin{align*}
|Y^{p,1}_t - Y^{p,2}_t|^2 
&\leq \E\bblq\sup_{s\in[t,T]} |Y^{p,1}_s - Y^{p,2}_s|^2 \bI\ItF^p_t\bbrq \\
&\leq C(T,L)\E\bblq\sup_{s\in[t,T]}|X^{p,1}_s - X^{p,2}_s|^2\bI\ItF^p_t\bbrq \\ 
&\leq  2\max\{1,T\} C(T,L)\bblq |x_1-x_2|^2 + \E\bblq \int_t^T |\hat\alpha^{p,1}_s - \hat\alpha^{p,2}_s|^2 ds \Biggm| \ItF^p_t\bbrq\bbrq\\
&\leq 2\max\{1,T\}C(T,L)\bigl( |x_1-x_2|^2 + (\Lambda^p)^{-1}(Y^{p,2}_t-Y^{p,1}_t)(x_2-x_1) \bigr)\\ 
&\leq 
 \big(2\max\{1,T\}C(T,L)+ |\max\{1,T\}C(T,L) \bar{\Lambda}^p|^2 \big) |x_1-x_2|^2 + \frac12 |Y^{p,2}_t-Y^{p,1}_t|^2.
\end{align*}
This implies that: 
\begin{equation*}
|Y^{p,1}_t - Y^{p,2}_t| \leq  \Gamma_p |x_1-x_2|,\quad \prob-a.s,
\end{equation*}
where $\Gamma_p^2:=4\max\{1,T\}C(T,L)+2|\max\{1,T\}C(T,L)\bar{\Lambda}^p|^2$, 
thus proving the validity of \eqref{eqn: iterrandom}.
\end{proof}


\section{Proof of Proposition \ref{prop: Xpinf satisfies FSDE} }
\label{appendix: measurability of Xinf}


We first need a preliminary result.
 \begin{lemma}
\label{lemma: boundedness fourth moment sequence of processes}
For $p = I,S$,  $(\Ypn)_{n\in\N}$ and $(\wn)_{n\in\N}$ are uniformly bounded by a constant $C_B$ depending only on $T$ and on the constants in Assumption \ref{assumption: coefficients}, and 
 $(\Xpn)_{n\in\N}$ have uniformly bounded fourth moments.  
\end{lemma}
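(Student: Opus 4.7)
[Proof plan for Lemma \ref{lemma: boundedness fourth moment sequence of processes}]
The plan is to bound the three quantities in order: first $Y^{p,n}$, which then gives a bound on $\varpi^n$, and finally one controls $X^{p,n}$ via standard SDE estimates using these two bounds.

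\textbf{Step 1: Uniform bound on $Y^{p,n}$.} The backward component of \eqref{eqn: FBSDE} has a driver and terminal condition that depend only on $(X^{p,n}, \chi^{p,\varpi^n})$ and not on $(Y^{p,n}, Z^{p,n})$. Hence, by the BSDE representation (see \cite[Theorem 1.60]{carmona2018probabilistic2} as invoked in Proposition \ref{lemma: iter random env}), for every $t \in [0,T]$,
\begin{equation*}
Y^{p,n}_t = \bE\Big[\partial_x g^p(X^{p,n}_T, \chi^{p,\varpi^n}_T) + \int_t^T \partial_x \bar{f}^p(s, X^{p,n}_s, \chi^{p,\varpi^n}_s)\, ds \,\Big|\, \bar\ItF^{p}_t\Big].
\end{equation*}
By Assumption \ref{assumption: coefficients}(\ref{B2}) the integrand and the terminal condition are bounded by $L$ in absolute value, which yields $|Y^{p,n}_t| \leq L(1+T)$ a.s., uniformly in $n$ and in $t$.

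\textbf{Step 2: Uniform bound on $\varpi^n$.} The fixed point relation \eqref{eqn: wn} expresses $\varpi^n_t$ as a convex combination (after dividing by $n_I\bar{\Lambda}^I + n_S\bar{\Lambda}^S$) of conditional expectations of $Y^{I,n}_t$ and $Y^{S,n}_t$. Applying Step~1 and the conditional triangle inequality gives $|\varpi^n_t| \leq L(1+T)$ a.s., uniformly in $n$ and $t$. Choosing $C_B := L(1+T)$ gives the asserted uniform bound on $Y^{p,n}$ and $\varpi^n$.

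\textbf{Step 3: Fourth moment bound on $X^{p,n}$.} The forward dynamics is
\begin{equation*}
dX^{p,n}_t = \bigl(-\bar{\Lambda}^p(Y^{p,n}_t + \varpi^n_t) + l^p(t,\varpi^n_t)\bigr) dt + \sigma^{p,0}(t,\varpi^n_t)\, db_t + \sigma^p(t,\varpi^n_t)\, dw^{p}_t.
\end{equation*}
By Assumption \ref{assumption: coefficients}(\ref{A1}), the coefficients $l^p, \sigma^{p,0}, \sigma^p$ have linear growth in $\varpi$ and, thanks to Steps~1--2, $|\varpi^n_t|\vee |Y^{p,n}_t| \leq C_B$. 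Hence the drift and diffusion coefficients of $X^{p,n}$ are uniformly bounded by a constant $\tilde C$ depending only on $L,\bar\Lambda^p,T$. Using the elementary inequality $(\sum_{i=1}^4 a_i)^4 \leq 4^3 \sum_{i=1}^4 a_i^4$, Jensen's inequality for the drift integral, and the Burkholder--Davis--Gundy inequality for the two stochastic integrals, we obtain for every $t \in [0,T]$
\begin{equation*}
\bE\Big[\sup_{s \in [0,t]} |X^{p,n}_s|^4\Big] \leq C_1 \bE[|\xi^p|^4] + C_2 \int_0^t \bE\Big[\sup_{r\in[0,s]} |X^{p,n}_r|^4\Big]\, ds,
\end{equation*}
where $C_1, C_2$ depend only on $\tilde C$ and $T$ (the first term uses Assumption \ref{assumption: 4th moment X0 bound}). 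Gronwall's lemma then yields $\sup_{n} \bE[\sup_{t \in [0,T]}|X^{p,n}_t|^4] < \infty$.

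No step is genuinely hard: the key observation is that Assumption \ref{assumption: coefficients}(\ref{B2}) makes the BSDE driver and terminal datum uniformly bounded, so $Y^{p,n}$ and consequently $\varpi^n$ are \emph{bounded} (not merely square-integrable), which trivialises the forward estimate. The only point requiring a little care is invoking the BSDE representation in Step~1 in the compatible setup used in the discretisation, but this is already covered by Proposition \ref{lemma: iter random env}.
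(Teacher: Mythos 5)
Your proposal is correct and follows essentially the same route as the paper: the BSDE representation plus the boundedness of $\partial_x \bar f^p$ and $\partial_x g^p$ gives the uniform bound on $Y^{p,n}$, the defining relation \eqref{eqn: wn} transfers it to $\varpi^n$, and the forward SDE with now-bounded coefficients together with Assumption \ref{assumption: 4th moment X0 bound} yields the fourth-moment bound. The only superfluous point is the Gronwall step: since the drift and diffusion do not depend on $X^{p,n}$ and are uniformly bounded, the estimate follows directly without the integral term.
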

\begin{proof}
The bound on $\Ypn$ follows immediately from the representation of solution to the BSDE as 
\[
\Ypn_t =  \bE \bigg[ \partial_x\bar{g}^p(\Xpn_T,\chi^{p,\wn}_T) + \int_t^T \partial_x\bar{f}^p(s,\Xpn_s, \chi^{p,\wn}_s) ds \biggm|{\ItF}^p_t \bigg]
\]
and the boundedness of $\partial_x\bar{g}^p$ and $\partial_x\bar{f}^p$. Since $\Ypn$ are bounded, $\wn$ are bounded as well, thanks to their definition  \eqref{eqn: wn}. The bound on the fourth moments of $\Xpn$ is obtained by using  the (forward) SDE, the boundedness of $\Ypn$, $\wn$, and the bound of the fourth moment of the initial condition.
\end{proof}

To prove the convergence result, we adopt the strategy described in the proof of \cite[Proposition 3.11]{carmona2018probabilistic2}.

\begin{proof}[Proof of Proposition \ref{prop: Xpinf satisfies FSDE}]
 As we proved in  Lemma \ref{lemma: compatibility condition limit ocp}, the sequence of optimal controls $(\hat\alpha^{p,n})_{n\in\N}$ converges in distribution on $\mathcal{M}([0,T],\R)$ to a real-valued stochastic process $\hat\alpha^{p,\infty}$ with \cadlag trajectories, gyven by \eqref{eqn: optimal controls limit game}. This implies that, up to subsequences, the process
\begin{equation}
\label{eqn: Theta n}
\Theta^{p,n}:= (\eta^{p},b,w^p, \chi^{p,\wn}, \Ypn , \Xpn,\hat\alpha^{p,n}) \in \Om^p_{\text{input}}\times\mathcal{C}([0,T],\R) \times \mathcal{M}([0,T],\R)
\end{equation}
admits a weak limit $\Theta^{p,\infty}:= (\eta^{p,\infty}, \binf,w^{p,\infty},\chi^{p,\varpi^{\infty}}, \Ypinf, \Xpinf, \hat\alpha^{p,\infty})$. To show that the weak limit $\Xpinf$ satisfies system \eqref{eqn: limit SDE state}  we first notice that every weak limit of the initial condition 
has the same distribution of $\xi^p$. Moreover, by Assumption \ref{assumption: admissibility} together with the fact that $(\binf, w^{p,\infty}, c^{\infty})$ is distributed as $(B,W^p,C)$, we can conclude that $(\binf,w^{p,\infty})$ is a two-dimensional $\F^{\Theta^{p,\infty}}$-Brownian motion (see \cite[Lemma 2.41]{tesilanaro} for a proof). 
We notice that $X^{p,\infty}\in \mathbb{S}^2(\F^{\Theta^{p,\infty}})$ by Lemma \ref{lemma: boundedness fourth moment sequence of processes}. We also recall that $\alpha^{p,n}$ is bounded by Lemma \ref{lemma: boundedness fourth moment sequence of processes} and hence uniformly square integrable.
We then introduce the following processes:
\begin{align*}
B^{p,n}_t := \int_0^t(\hat\alpha^{p,n}_s + l^p(s,\wn_s)) ds,\
\Sigma^{p,0,n}_t := \int_0^t \sigma^{p,0}(s,\wn_s) db_s,\
\Sigma^{p,n}_t := \int_0^t \sigma^{p}(s,\wn_s) dw^p_s, 
\end{align*}
for $t\in[0,T]$.
We apply \cite[Lemma 3.6]{carmona2018probabilistic2} to prove that $B^{p,n}$ converges weakly to $B^{p,\infty}:= \int_0^{\cdot} (\hat\alpha^{p,\infty}_s + l^p(s,\winf_s))ds$ on $\mathcal{C}([0,T],\R)$. For $\Sigma^{p,0,n}$ and $\Sigma^{p,n}$, we proceed differently. We consider $\sigma^n_t\in \{ \sigma^{I}(t,\wn_t),\sigma^{I,0}(t,\wn_t),\sigma^{S}(t,\wn_t),\sigma^{S,0}(t,\wn_t)\}$ and denote by $\sigma\in\{ \sigma^I,\sigma^{I,0},\sigma^S,\sigma^{S,0}\}$. Since $\wn$ converges weakly to $\winf$ in $\mathcal{M}([0,T],\R)$ and $\sigma$ is continuous in $\w$-variable, by \cite[Lemma 3.5]{carmona2018probabilistic2}, $\sigma(t,\wn_t))_{t\in[0,T]}$ converges in distribution to $(\sigma(t,\winf_t))_{t\in[0,T]}$ on $\mathcal{M}([0,T],\R^2)$. Moreover, by Lemma \ref{lemma: boundedness fourth moment sequence of processes} and Assumption \ref{assumption: coefficients}, we have that $\sup_{n\in\N} \sup_{t\in[0,T]}|\sigma(t,\wn_t)|^2 \leq L$ for a suitable constant $L>0$. By Skorokhod's representation theorem, there exists a probability space $(\hat\Omega,\hat\ItF,\hat\prob)$ on which a sequence $(\hat\w^n, \hat{b}^n)_{n\in\N\cup\{\infty\}}$ is defined such that $(\hat\w^n,\hat{b}^n) \stackrel{d}{=} (\wn,b)$ for all $n\in\N$ and $\lim_{n\to \infty}( \hat{\w}^n, \hat{b}^n) = (\hat{\w}^\infty, \hat{b}^\infty)$ \quad $\hat\prob$-a.s. on $\mathcal{M}([0,T],\R) \times \mathcal{C}([0,T],\R)$. 
Since uniform boundedness of ${\w}^n$ is transferred to $\hat\w^n$, dominated convergence gives
\begin{equation*}
    \lim_{n\to\infty} \int_0^T | \sigma(t,\hat\w^n_t) - \sigma(t,\hat\w^\infty_t) |^2 dt = 0\quad \hat\prob-a.s.
\end{equation*}
and also 
\begin{equation}
\label{eqn: l2 convergence volatilities}
\lim_{n\to\infty} \hat\E\bigg[ \int_0^T |\sigma(t,\hat\w^n_t) - \sigma(t,\hat\w^\infty_t) |^2 dt\bigg] =0.
\end{equation}
We shall prove that
\begin{equation}
\label{eqn: C convergences stochastic integrals}
    \lim_{n\to\infty}\hat\E \bigg[ \sup_{t\in[0,T]} \bigg| \int_0^t \sigma(t,\hat\w^n_t) d\hat{b}^n_t - \int_0^t \sigma(t,\hat\w^\infty_t) d\hat{b}^\infty_t\bigg| \bigg] = 0.
\end{equation}
We write the proof for the common noise $b$, but it holds in the same way for the idiosyncratic noise $w^p$. 
Since $(\sigma(t,\winf_t))_{t\in[0,T]}\in \mathbb{H}^2(\F^{\winf})$, there exists a sequence $(\sigma^k)_{k\in\N}$ of uniformly bounded $\F^{\winf}$-progressively measurable step functions of the form $\sigma^k_t := \sum_{h = 0}^{M_k-1} \sigma^k_h \charfunc_{[t_h,t_{h+1})}(t)$, for a subdivision $0\leq t_0 <\dots < t_{M_k} \leq T$, where $\sigma^k_h$ is $\mathcal{F}^{\winf}_{t_h}$-measurable and $\sigma^k_h \leq 2L$ $\hat\prob$-a.s., such that
\begin{equation*}
\hat\E \bigg[ \int_0^T |\sigma^k_t  - \sigma(t,\hat\w^{\infty}_t)|^2 dt\bigg]\leq \frac{1}{k},\quad k\in\N.
\end{equation*}
As a consequence, for every $k \in\N$, by Doob's martingale inequality we have that
\begin{align}
\label{eqn: convergence supnorm stochastic integrals} 
&\hat\E \bigg[ \sup_{t\in[0,T]} \bigg| \int_0^t \sigma(t,\hat\w^n_t) d\hat{b}^n_t - \int_0^t \sigma(t,\hat\w^\infty_t) d\hat{b}^\infty_t\bigg| \bigg]\\
 \nonumber &\qquad\leq \hat\E \bigg[ \sup_{t\in[0,T]} \bigg| \int_0^t (\sigma(t,\hat\w^n_t)-\sigma(t,\hat\w^\infty_t))d\hat{b}^n_t\bigg| \bigg] +\hat\E \bigg[ \sup_{t\in[0,T]} \bigg| \int_0^t \sigma(t,\hat\w^\infty_t) d\hat{b}^n_t - \int_0^t \sigma(t,\hat\w^\infty_t) d\hat{b}^\infty_t\bigg| \bigg] \\
 \nonumber&\qquad\leq 2\bigg(\hat\E\bigg[ \int_0^T |\sigma(t,\hat\w^n_t) - \sigma(t,\hat\w^\infty_t) |^2 dt\bigg]\bigg)^{\frac{1}{2}} + \hat\E\bigg[\sup_{t\in[0,T]}\bigg(\bigg| \int_0^t (\sigma(t,\hat{\w}^\infty_t) -\sigma^k_t) d\hat{b}^n_t \bigg| \\
 \nonumber &\qquad \qquad+ \bigg| \int_0^t \sigma^k_t d\hat{b}^n_t - \int_0^t \sigma^k_t d\hat{b}^\infty_t \bigg| + \bigg|\int_0^t (\sigma^k_t - \sigma(t,\hat{\w}^\infty_t)) d\hat{b}^\infty_t\bigg|\bigg)\bigg].
\end{align}
Applying again martingale Doob's inequality we notice that
\begin{equation}
\label{eqn: convergence step function vol}
    \hat\E\bigg[ \sup_{t\in[0,T]} \bigg| \int_0^t (\sigma(t, \hat\w^\infty_t) - \sigma^k_t) d\hat{b}^n_t \bigg|\bigg] \leq 2\bigg(\hat\E\bigg[ \int_0^T |\sigma(t,\hat\w^\infty_t)- \sigma^k_t |^2dt\bigg]\bigg)^{\frac{1}{2}} \leq \frac{2}{\sqrt{k}}
\end{equation}
and the same holds for $\hat\E[ \sup_{t\in[0,T]}| \int_0^t (\sigma^k_t- \sigma(t, \hat\w^\infty_t) ) d\hat{b}^\infty_t |]$. On the other hand, we observe that 
\begin{align}
\label{eqn: convergence step fucntion vol II}
\hat\E\bigg[ \sup_{t\in[0,T]} \bigg| \int_0^t \sigma^k_t d\hat{b}^n_t - \int_0^t \sigma^k_t d\hat{b}^\infty_t \bigg|\bigg] &\leq\hat\E \bigg[ \sup_{t\in[0,T]} \bigg| \sum_{h = 1}^{M_k} \sigma^k_{h} \big[ \big( \hat{b}^n_{t_{h+1}} - \hat{b}^\infty_{t_{h+1}}\big) - \big( \hat{b}^n_{t_{h}} - \hat{b}^\infty_{t_{h}}\big)\big]\bigg|\bigg]\\
 \nonumber&\leq 4L M_k\hat\E \bigg[ \sup_{t\in[0,T]} |\hat{b}^n_t - \hat{b}^\infty_t|\bigg].
\end{align}
Applying \eqref{eqn: convergence step function vol} and \eqref{eqn: convergence step fucntion vol II} to \eqref{eqn: convergence supnorm stochastic integrals}, we notice that, for every $k\in\N$,
\begin{align*}
    &\hat\E \bigg[ \sup_{t\in[0,T]} \bigg| \int_0^t \sigma(t,\hat\w^n_t) d\hat{b}^n_t - \int_0^t \sigma(t,\hat\w^\infty_t) d\hat{b}^\infty_t\bigg| \bigg] \\
    &\qquad\leq  2\bigg(\hat\E\bigg[ \int_0^T |\sigma(t,\hat\w^n_t) - \sigma(t,\hat\w^\infty_t) |^2 dt\bigg]\bigg)^{\frac{1}{2}} + \frac{4}{\sqrt{k}} + 4L M_k\hat\E \bigg[ \sup_{t\in[0,T]} |\hat{b}^n_t - \hat{b}^\infty_t|\bigg].
\end{align*}
By \eqref{eqn: l2 convergence volatilities} and uniform integrability of $(\hat{b}^n_t - \hat{b}^\infty_t)_{t\in[0,T]}$, we observe that 
\begin{align*}
&\lim_{n\to\infty}\bigg(2\bigg(\hat\E\bigg[ \int_0^T |\sigma(t,\hat\w^n_t) - \sigma(t,\hat\w^\infty_t) |^2 dt\bigg]\bigg)^{\frac{1}{2}}+ 4LM_k\E \bigg[ \sup_{t\in[0,T]} |\hat{b}^n_t - \hat{b}^\infty_t|\bigg]\bigg) = 0. 
\end{align*}
This suffices to conclude that, for every $k\in\N$,
\begin{equation*}
    \limsup_{n\to\infty}\hat\E \bigg[ \sup_{t\in[0,T]} \bigg| \int_0^t \sigma(t,\hat\w^n_t) d\hat{b}^n_t - \int_0^t \sigma(t,\hat\w^\infty_t) d\hat{b}^\infty_t\bigg| \bigg] \leq \frac{4}{\sqrt{k}}.
\end{equation*}
Letting $k\to\infty$ we obtain \eqref{eqn: C convergences stochastic integrals}, which implies that $\lim_{n\to\infty} \int_0^t \sigma(t,\hat\w^n_t) d\hat{b}^n_t \stackrel{d}{=} \int_0^t \sigma(t,\hat\w^\infty_t) d\hat{b}^\infty_t$ on $\mathcal{C}([0,T],\R)$. As a consequence, we can conclude that the sequence
\begin{displaymath}
\tilde\Sigma^{p,n} := \Big(\sigma^p(\cdot,\wn_{\cdot}),\sigma^{p,0}(\cdot,\wn_{\cdot}),w^p,b,\int\sigma^{p}(t,\wn_{t}) dw^p_t,\int\sigma^{p,0}(t,\wn_{t}) db_t\Big),\quad n\in\N,
\end{displaymath} 
is tight on $\mathcal{M}([0,T],\R^2) \times \mathcal{C}([0,T],\R^4)$. Thus, up to subsequences,  $(\tilde\Sigma^{p,n})_{n\in\N}$ converges in distribution on $\mathcal{M}([0,T],\R^2) \times \mathcal{C}([0,T],\R^4)$ to 
\begin{displaymath}
\tilde\Sigma^{p,\infty} := \Big(\sigma^p(\cdot,\winf_{\cdot}),\sigma^{p,0}(\cdot,\winf_{\cdot}), w^p, \binf,\int\sigma^{p}(t,\winf_{t}) d\binf_t,\int\sigma^{p,0}(t,\winf_{t}) d\binf_t\Big).
\end{displaymath} 
Hence, up to subsequences, $(\Theta^{p,n},B^{p,n},\Sigma^{p,0,n},\Sigma^{p,n})_{n\in\N}$ converges to $(\Theta^{p,\infty},B^{p,\infty}, \Sigma^{p,0,\infty}, \Sigma^{p,\infty})$ on $\Omega^p_{\text{input}}\times \mathcal{C}([0,T],\R) \times \mathcal{M}([0,T],\R) \times  (\mathcal{C}([0,T],\R))^3$ in distribution. Finally, we consider the function $h:\Omega^p_{\text{input}}\times \mathcal{C}([0,T],\R) \times \mathcal{M}([0,T],\R) \times  (\mathcal{C}([0,T],\R))^3\to \mathcal{C}([0,T],\R)$ 
given by
\begin{align*}
(\Theta,B,\Sigma^0,\Sigma)&\mapsto (h_t(\Theta, B, \Sigma^0, \Sigma))_{t\in[0,T]} := (X_t - (X^0 + B_t + \Sigma^0_t + \Sigma_t))_{t\in[0,T]} .
\end{align*}
Since $h$ is continuous, by the continuous mapping theorem, $(h(\Theta^{p,n},B^{p,n},\Sigma^{p,0,n},\Sigma^{p,n}))_{n\in\N}$ is convergent in distribution on $\mathcal{C}([0,T],\R)$. Moreover, $h_t(\Theta^{p,n},B^{p,n},\Sigma^{p,0,n},\Sigma^{p,n}) = 0$. Therefore, we obtain $\prob^{\infty}$-a.s.
\begin{displaymath}
X^{p,\infty}_t - (\xi^{p,\infty} +B^{p,\infty}_t+ \Sigma^{p,0,\infty}_t+\Sigma^{p,\infty}_t) = 0, \quad \forall t\in[0,T],
\end{displaymath} 
which corresponds to \eqref{eqn: limit SDE state}. 
\end{proof}

\section{Meyer-Zheng space}
\label{Appendix:MZ} 

We recall here some results about the Meyer-Zheng topology which are used in the paper, referring to \cite[\S 3.2.2]{carmona2018probabilistic2} for a more complete account. The Meyer-Zheng topology on $\mathcal{M}([0,T], \R)$ is the topology of convergence in $dt$-measure  for functions from $[0,T]$ to $\R$. The set $\mathcal{M}([0,T], \R)$ is a Polish space with the metric 
\[
d_{\mathcal{M}}(x,y) = \int_0^T 1\wedge |x_t-y_t| dt.  
\]
Note that the topology of $\mathcal{M}([0,T], \R)$ is weaker than the Skorokhod topology on $\mathcal{D}([0,T], \R)$. 

The main criterion for tightness in the Meyer-Zheng space can be found in \cite[Theorem 5.8]{kurtz1991random}. This result states that if a sequence of \cadlag processes $(A^n)_{n\in\N}$ adapted to a filtration $\mathbb{G}^n$ on $(\Omega^n,\mathcal{G}^n, \mathbb{P}^n)$ satisfies
\begin{equation}
\label{eqn: Kurtz condition}
\sup_{n\in\N} \Big{\{}\E^n \big[|A^n_T| \big]+ V^{n}_T(A^n)\Big{\}}< \infty,
\end{equation}
then the laws of the processes are tight on $\mathcal{M}([0,T],\R)$ and any limit point is the law of 
 a process admitting a \cadlag version. The term $V^{n}_T(A^n)$ in \eqref{eqn: Kurtz condition} stands for the \emph{conditional variance of $A^n$ with respect to $\mathbb{G}^n$}, defined as 
\begin{equation}
\label{eqn: conditional variance}
V^{n}_t(A^{n}):= \sup_{\Delta \subset [0,t]} \E^n\bblq \sum_{i = 1}^{N_\Delta}\bI\E^n[ A^{n}_{t_{i+1}} - A^{n}_{t_i}\mid\mathcal{G}^{n}_{t_i}]\bI\bbrq, 
\end{equation}
for $t\in[0,T]$, where the supremum is taken over all partitions $\Delta$ of the time interval $[0,t]$, denoting by $N_\Delta$ the number of elements in the partition $\Delta$. 

\end{document}